\title{\textsc{\textbf{{
The spinorial energy for asymptotically Euclidean Ricci flow}}}}
\author{\textsc{Julius Baldauf\thanks{Supported in part by the National Science Foundation. {\it E-mail}: \texttt{juliusbl@mit.edu}} 
\quad \quad \quad 
Tristan Ozuch}
\vspace{0.2cm}\\
    \textsc{\footnotesize MIT Department of Mathematics}\vspace{-0.1cm}\\
    \textsc{\footnotesize Cambridge, MA}
    \vspace{-0.05cm}
}
\date{}
\renewcommand\th@plain{\slshape}
\xpatchcmd{\proof}{\itshape}{\slshape}{}{}
\renewcommand\th@plain{\slshape}
\titleformat*{\section}{\centering\large\scshape\sffamily}
\titleformat{\subsection}[runin]
  {\normalfont\bfseries}{\thesubsection.}{0.6em}{}
\titleformat{\subsubsection}[runin]
  {\normalfont\bfseries}{\thesubsubsection.}{0.6em}{}
\numberwithin{equation}{section}
\theoremstyle{plain} 
\newtheorem{lemma}[equation]{Lemma}
\newtheorem{proposition}[equation]{Proposition}
\newtheorem{theorem}[equation]{Theorem}
\newtheorem{corollary}[equation]{Corollary}
\theoremstyle{definition}
\newtheorem{remark}[equation]{Remark}
\newcommand{\R}{\mathbb{R}}
\newcommand{\N}{\mathbb{N}}
\renewcommand{\tilde}{\widetilde}
\newcommand{\Div}{\mathrm{div}}
\newcommand{\Ric}{\mathrm{Ric}}
\newcommand{\Scal}{\mathrm{R}}
\renewcommand{\phi}{\varphi}
\renewcommand{\epsilon}{\varepsilon}
\newcommand{\Rea}{\,\mathrm{Re}\,}
\newcommand{\tr}{\mathrm{tr}}
\newcommand{\Hess}{\mathrm{Hess}}
\newcommand{\mass}{\mathfrak{m}}
\newcommand{\intprod}{\lrcorner\,}
\newcommand{\euc}{\mathrm{euc}}
\newcommand{\dL}{\Delta}
\newcommand{\Ld}{\mathscr{L}}
\newcommand{\Cyl}{\bar{M}}
\newcommand{\nablaCyl}{\overline{\nabla}}
\begin{document}

\maketitle

\begin{abstract}
This paper introduces a functional generalizing Perelman's weighted Hilbert-Einstein action and the Dirichlet energy for spinors. It is well-defined on a wide class of non-compact manifolds; 
on asymptotically Euclidean manifolds, the functional is shown to admit a unique critical point, which is necessarily of min-max type, and Ricci flow is its gradient flow.
The proof is based on variational formulas for weighted spinorial functionals, valid on all spin manifolds with boundary.
\end{abstract}

\section{Introduction}

Spinors are vectors in a complex vector space canonically associated with Euclidean space. They were first discovered by \'Elie Cartan a century ago \cite{C}, and soon thereafter, Paul Dirac \cite{D} used them to model the behavior of electrons and other elementary particles.
Spinors have since then been used fruitfully in mathematics to understand the geometry and topology of static manifolds \cite{AS, W1, GL, W2}. 
This paper introduces spin geometry into the Ricci flow  \cite{Ha} by showing that it is the gradient flow of a natural spinorial functional on asymptotically Euclidean (AE) manifolds. 

The gradient flow formulation established here is the analogue of Perelman's entropy monotonicity on closed manifolds \cite{P}.
Perelman showed that Ricci flow on closed manifolds is the gradient flow of the $\lambda$-entropy, which is proportional to the first eigenvalue of the Schr\"odinger operator $-\Delta +\frac{1}{4}\Scal$ acting on \emph{functions}. Due to Kato's inequality and the Lichnerowicz formula, the $\lambda$-entropy is bounded above by the first eigenvalue of the square of the Dirac operator $D^2=-\Delta +\frac{1}{4}\Scal$ acting on \emph{spinors}. This bound suggests a link between Perelman's $\lambda$-entropy and the Dirac operator.

The link is provided by the weighted Dirac operator \cite{BO}, which is the natural generalization of the Atiyah-Singer Dirac operator for a weighted spin manifold $(M^n,g,e^{-f})$. It is defined as
\begin{equation}\label{eqn: weighted Dirac operator}
    D_f=D-\frac{1}{2}(\nabla f)\cdot,
\end{equation}
where $\nabla f$ acts by Clifford multiplication, and $D$ denotes the standard (unweighted) Dirac operator. 
First introduced by Perelman \cite{P}, 
the weighted Dirac operator 
is self-adjoint with respect to the weighted measure $e^{-f}dV$, is unitarily equivalent to the standard Dirac operator, 
and satisfies the weighted Lichnerowicz formula (\ref{eqn: weighted Lichnerowicz}) involving Perelman's weighted scalar curvature 
\begin{equation}
    \Scal_f=\Scal+2\Delta f-|\nabla f|^2.
\end{equation}

On a weighted, asymptotically Euclidean (AE), spin manifold, the weighted Dirac operator allows for the generalization of a Witten spinor: a weighted Witten spinor is a spinor lying in the kernel of the weighted Dirac operator and which is asymptotic to a constant spinor of unit norm.
The weighted Dirichlet energy of a weighted Witten spinor plays the role of Perelman's $\lambda$-entropy for closed manifolds because Ricci flow is the gradient flow of this weighted Dirichlet energy for a certain weight \cite{BO}. 
Here it is shown that the coupled elliptic system consisting of the weighted scalar-flat equation and the weighted Witten spinor equation has a natural variational interpretation.

On an AE, spin manifold $(M^n,g)$, define the energy functional $\mathscr{E}_g$ depending on a spinor $\psi$, asymptotic to a constant spinor of norm 1, and a weight function $f$, asymptotic to 0 at infinity, by
\begin{equation}\label{eqn: energy functional}
    \mathscr{E}_g(\psi,f)=\int_M\Big( 4|\nabla \psi|^2+\Scal_f(|\psi|^2-1)\Big) e^{-f}dV_g.
\end{equation}
This energy generalizes various well-known functionals, including Perelman's weighted Hilbert-Einstein action, 
the ``spinorial energy'' \cite{AWW}, 
and the weighted Dirichlet energy of the spinor; see Section \ref{sec: critical points}.
For suitable choices of the spinor and weight, the value of the energy (\ref{eqn: energy functional}) equals the difference between the ADM mass and the Hilbert-Einstein action, also known as the Regge-Teitelboim Hamiltonian,
or the difference between the weighted ADM mass and Perelman's weighted Hilbert-Einstein action \cite{BO,DO,DO2}. 
The energy functional introduced here thus provides a unified treatment of many important functionals in geometric analysis and physics.

The following theorem characterizes the critical points of the energy (\ref{eqn: energy functional}).

\begin{theorem}
[Critical points]
\label{thm: existence and uniqueness of critical points, intro}
On every spin, AE manifold with nonnegative scalar curvature, the functional $\mathscr{E}_g$ admits a unique critical point $(\psi_g,f_g)$. This critical point satisfies the elliptic equations
\begin{equation}\label{eqn: critical point equations}
    \Scal_{f_g}=0 \qquad \text{and} \qquad D_{f_g}\psi_g=0,
\end{equation}
so $\psi_g$ is an $f_g$-weighted Witten spinor. Moreover, $(\psi_g,f_g)$ is a min-max critical point,
\begin{equation}\label{eqn: min-max quantity}
    \mathscr{E}_g(\psi_g,f_g)=\max_{f} \min_{\psi}\mathscr{E}_g(\psi,f).
\end{equation}
\end{theorem}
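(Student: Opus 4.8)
The plan is to compute the first variation of $\mathscr{E}_g$ to read off its Euler--Lagrange system, to show that any critical point is forced to solve the coupled equations (\ref{eqn: critical point equations}), to exhibit the unique solution of that system by solving its two equations in succession, and to derive (\ref{eqn: min-max quantity}) from the convexity of $\psi\mapsto\mathscr{E}_g(\psi,f)$ together with a completion-of-squares identity in the weight. Using the variational formulas for weighted spinorial functionals and integration by parts against the weighted measure $e^{-f}\,dV_g$: for a spinor variation $\delta\psi$ decaying at infinity, the weighted Weitzenb\"ock identity and the weighted Lichnerowicz formula (\ref{eqn: weighted Lichnerowicz}) give $\delta_\psi\mathscr{E}_g(\psi,f)[\delta\psi]=8\,\Rea\int_M\langle D_f^2\psi,\delta\psi\rangle\,e^{-f}\,dV_g$, so the spinor Euler--Lagrange equation is $D_f^2\psi=0$; for a decaying variation $\delta f=h$, using that the Bakry--\'Emery Laplacian $\Delta_f=\Delta-\nabla f\!\cdot\!\nabla$ is self-adjoint for $e^{-f}\,dV_g$, one gets $\delta_f\mathscr{E}_g(\psi,f)[h]=\int_M\bigl(2\Delta_f|\psi|^2-4|\nabla\psi|^2-\Scal_f(|\psi|^2-1)\bigr)h\,e^{-f}\,dV_g$, so the weight Euler--Lagrange equation is $2\Delta_f|\psi|^2-4|\nabla\psi|^2-\Scal_f(|\psi|^2-1)=0$.

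Next I would show that any critical point solves (\ref{eqn: critical point equations}); this is where the two equations interact. By (\ref{eqn: weighted Lichnerowicz}) the spinor equation $D_f^2\psi=0$ is equivalent to $\nabla_f^*\nabla\psi=-\tfrac14\Scal_f\psi$, and combining this with the pointwise identity $\tfrac12\Delta_f|\psi|^2=-\Rea\langle\nabla_f^*\nabla\psi,\psi\rangle+|\nabla\psi|^2$ gives the Bochner identity $\Delta_f|\psi|^2=2|\nabla\psi|^2+\tfrac12\Scal_f|\psi|^2$. Substituting this into the weight equation, every term cancels except $\Scal_f$, leaving $\Scal_f=0$. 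With $\Scal_f=0$ the spinor equation reads $\nabla_f^*\nabla\psi=0$, so $D_f\psi$ lies in the kernel of $D_f$ and decays at infinity (as $\psi$ is asymptotic to a constant spinor); since the kernel of $D_f$ on decaying spinors is trivial --- which holds here because $\Scal_f=0$, by the weighted version of Witten's integration-by-parts identity --- we conclude $D_f\psi=0$. Thus $(\psi,f)$ solves (\ref{eqn: critical point equations}) and $\psi$ is an $f$-weighted Witten spinor.

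Conversely I would build a solution of (\ref{eqn: critical point equations}). The substitution $u=e^{-f/2}$ converts $\Scal_f=0$, $f\to 0$, into $(-\Delta+\tfrac14\Scal)u=0$, $u\to 1$; since $\Scal\ge 0$ the Schr\"odinger operator $-\Delta+\tfrac14\Scal$ is nonnegative with trivial kernel on decaying functions, hence an isomorphism between the appropriate weighted Sobolev spaces on the AE manifold, which yields a unique decaying $w$ with $(-\Delta+\tfrac14\Scal)w=-\tfrac14\Scal$, and $u=1+w>0$ by the strong maximum principle; set $f_g:=-2\ln u$. Because $\Scal_{f_g}=0$, the self-adjoint weighted Dirac operator $D_{f_g}$, which has the Fredholm properties of the AE Dirac operator, has trivial kernel on decaying spinors and is therefore an isomorphism, producing a unique $\psi_g$ with $D_{f_g}\psi_g=0$ asymptotic to a prescribed unit constant spinor; the Bochner identity above shows $(\psi_g,f_g)$ satisfies the weight equation too, so it is a critical point, and by the previous paragraph it is the only one. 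For the min-max: for fixed $f$, the weighted Witten identity $\mathscr{E}_g(\psi,f)=4\int_M|D_f\psi|^2e^{-f}\,dV_g+4\mathfrak{B}_f-\int_M\Scal_f e^{-f}\,dV_g$ (with $\mathfrak{B}_f$ the boundary term at infinity, depending only on the fixed asymptotics of $\psi,f,g$) shows $\psi\mapsto\mathscr{E}_g(\psi,f)$ is convex with $\min_\psi\mathscr{E}_g(\psi,f)=4\mathfrak{B}_f-\int_M\Scal_f e^{-f}\,dV_g$, attained at an $f$-weighted Witten spinor; in particular $\min_\psi\mathscr{E}_g(\psi,f_g)=\mathscr{E}_g(\psi_g,f_g)$ since $\Scal_{f_g}=0$. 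It then remains to prove $(\psi_g,f_g)$ is a saddle point, the nontrivial half being $\mathscr{E}_g(\psi_g,f)\le\mathscr{E}_g(\psi_g,f_g)$ for every $f$; for this I would apply the weighted Witten identity to the pair $(\psi_g,f)$, use $D_f\psi_g=-\tfrac12\nabla(f-f_g)\!\cdot\!\psi_g$ (from $D_{f_g}\psi_g=0$) to isolate the manifestly nonnegative term $\int_M|\nabla(f-f_g)|^2|\psi_g|^2e^{-f}\,dV_g$, and show that the remaining $f$-dependent terms undercut their value at $f=f_g$ by at least that amount, using $\Scal\ge 0$ and the bound $|\psi_g|\le 1$ that follows from applying the maximum principle to $\Delta_{f_g}|\psi_g|^2=2|\nabla\psi_g|^2\ge 0$.

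The variational calculus and the weighted elliptic theory behind existence and uniqueness are routine, if technical. I expect the main obstacle to be the last step: controlling how the boundary term $\mathfrak{B}_f$ at infinity depends on the weight and balancing it against the bulk integrals, together with $\Scal\ge 0$, to obtain the sharp comparison --- namely that moving the weight off the scalar-flat weight $f_g$ strictly lowers $\min_\psi\mathscr{E}_g$.
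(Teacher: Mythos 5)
Your proposal is correct and, for the first variation and the characterization of critical points, follows the paper's route: your weight equation $2\Delta_f|\psi|^2-4|\nabla \psi|^2-\Scal_f(|\psi|^2-1)=0$ is the Bochner-rewritten form of the paper's Euler--Lagrange equation $\Scal_f=4\Rea\langle D_f^2\psi,\psi\rangle$, and the reduction from $D_f^2\psi=0$, $\Scal_f=0$ to $D_f\psi=0$ is the same integration-by-parts argument. Where you genuinely diverge is the min-max. The paper proves the max by moving the minimizing spinor along with the weight: since $\psi_{f+h}=e^{h/2}\psi_f$ by unitary equivalence, one gets the exact identity $\mathscr{E}_g(\psi_{f+h},f+h)=\mathscr{E}_g(\psi_f,f)-\int_M|\nabla h|^2e^{-f-h}dV$ at the scalar-flat weight, which yields $\max_f\min_\psi$ directly with no auxiliary estimate. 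You instead freeze $\psi_g$ and aim for the saddle inequality $\mathscr{E}_g(\psi_g,f)\le\mathscr{E}_g(\psi_g,f_g)$, which does imply the min-max formula by the standard von Neumann argument; and the step you flag as the main obstacle does close: using $\Scal_{f_g+h}=2\Delta_{f_g}h-|\nabla h|^2$, $\Delta_{f_g}|\psi_g|^2=2|\nabla\psi_g|^2$, and two integrations by parts, one finds the exact identity
\begin{equation*}
\mathscr{E}_g(\psi_g,f_g+h)-\mathscr{E}_g(\psi_g,f_g)=\int_M|\nabla h|^2\left(|\psi_g|^2-1\right)e^{-f_g-h}\,dV\le 0,
\end{equation*}
the sign coming from your maximum-principle bound $|\psi_g|\le 1$ (note $\Scal\ge 0$ is not actually needed at this step). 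Two caveats. First, your global ``weighted Witten identity'' $\mathscr{E}_g=4\int_M|D_f\psi|^2e^{-f}+4\mathfrak{B}_f-\int_M\Scal_f e^{-f}$ with $\mathfrak{B}_f$ independent of the competitor is delicate under the standing decay $\tau>\frac{n-2}{2}$: $\int_M\Scal_f e^{-f}$ need not converge separately (only the combination $\int_M\Scal_f(|\psi|^2-1)e^{-f}$ does), and the $\psi$-dependence of the boundary term at infinity only cancels against the bulk; the paper sidesteps this by computing the difference $\mathscr{E}_g(\psi_f+\phi,f)-\mathscr{E}_g(\psi_f,f)=4\int_M|D_f\phi|^2e^{-f}dV$ for compactly supported $\phi$ and invoking density --- you should do the same, and, as your formulation makes manifest, this renders the Kato-inequality step in the paper's Claim 1 unnecessary. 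Second, for existence the paper obtains $\psi_f=e^{f/2}\psi$ for free from the unweighted Witten spinor rather than re-running Fredholm theory for $D_{f_g}$; your surjectivity claim for $D_{f_g}$ is true but requires the AE duality/index theory (cf.\ the paper's Lemma \ref{lem: Dirac is isomorphism}).
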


Given this theorem, define $\kappa(g)$ to be the energy of the unique critical point $(\psi_g,f_g)$ of $\mathscr{E}_g$,
\begin{equation}
    \kappa(g)
    =\max_{f} \min_{\psi}\mathscr{E}_g(\psi,f) 
    =\int_M|\nabla \psi_g|^2 \,e^{-f_g}dV.
\end{equation}
The main theorem of this paper concerns the time derivative of $\kappa(g(t))$ along a Ricci flow $\partial_t g = -2 \Ric(g)$. 
Because the metric along a Ricci flow is changing in time, the spin bundle is also changing, though the spin bundles at different times are isomorphic. 
A standard method for dealing with this subtlety is to consider the spacetime cylinder $M\times I$, equipped with a certain cylindrical metric.
The spin bundle of the cylinder can then be related to the spin bundles along the Ricci flow.
With this identification of the spin bundles understood, the main theorem of this paper shows that the energy $\kappa$ is the analogue of Perelman's $\lambda$-entropy, in the sense that Ricci flow is its gradient flow.

\begin{theorem}
[Monotonicity]\label{thm: monotonicity intro}
On every spin, AE Ricci flow with nonnegative scalar curvature, there exists at each time a unique min-max critical point $(\psi,f)$ of $\mathscr{E}_g$ and
\begin{equation}\label{eqn: weighted Dirichlet monotonicity}
    \kappa'(t)
    =\frac{d}{dt}\int_M|\nabla \psi|^2\,e^{-f}dV
    =-\frac{1}{2}\int_M|\Ric+\Hess_{f}|^2\,e^{-f}dV.
\end{equation}
In particular, Ricci flow is the $L^2(e^{-f}dV)$-gradient flow of $\kappa$, the weighted Dirichlet energy of $\psi$.
\end{theorem}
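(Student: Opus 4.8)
The plan is to reduce $\kappa'(t)$, by a first-variation (envelope) argument, to the variation of $\mathscr{E}_g$ in the metric alone, to evaluate that variation at the min-max point using the critical equations (\ref{eqn: critical point equations}), and to close the computation with a weighted contracted second Bianchi identity.

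First I would verify that the hypotheses of Theorem~\ref{thm: existence and uniqueness of critical points, intro} persist under $\partial_t g=-2\Ric$: the AE structure is preserved by Ricci flow, and since $\Scal\to0$ at the ends and $\partial_t\Scal=\Delta\Scal+2|\Ric|^2$, a maximum principle shows $\Scal\geq0$ is preserved, so at each time there is a unique min-max critical point $(\psi(t),f(t))$ with $\Scal_{f(t)}=0$ and $D_{f(t)}\psi(t)=0$. Next, in weighted H\"older or Sobolev spaces adapted to the ends, the linearization of the coupled system $(\Scal_f,\,D_f\psi)$ is invertible at the critical point — this non-degeneracy underlies the uniqueness in Theorem~\ref{thm: existence and uniqueness of critical points, intro} — so the implicit function theorem gives that $t\mapsto(\psi(t),f(t))$ is $C^1$, with $\partial_t\psi$, $\partial_t f$ and their derivatives decaying uniformly at infinity; spinors at different times are compared via the spacetime cylinder $(M\times I,\,dt^2+g(t))$ (equivalently, the Bourguignon-Gauduchon universal spin bundle). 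Since $\kappa(g)=\mathscr{E}_g(\psi_g,f_g)$ and $(\psi_g,f_g)$ is a critical point of $\mathscr{E}_g$ among admissible pairs — spinor asymptotic to the fixed unit constant spinor, weight $\to0$ — while $\partial_t\psi$ and $\partial_t f$ are admissible variations, the chain rule gives
\[
\kappa'(t)=\delta_g\mathscr{E}_{g(t)}(\psi(t),f(t))[\dot g(t)],\qquad \dot g=-2\Ric,
\]
so only the explicit metric dependence of $\mathscr{E}_g$ contributes.

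The heart of the argument is the metric variation $\delta_g\mathscr{E}_g(\psi,f)[h]$, which I would compute from the paper's variational formulas for weighted spinorial functionals, in the spirit of the gradient-flow computation of \cite{BO}: the term $\int_M 4|\nabla\psi|^2e^{-f}dV$ contributes the variation of the volume density (through $\tfrac12\tr_g h$), the variation of the spin connection (linear in $h$ and $\nabla\psi$ under the cylinder identification) and the metric dependence of the pointwise spinor norm, while $\int_M\Scal_f(|\psi|^2-1)e^{-f}dV$ contributes the linearized weighted scalar curvature paired with $|\psi|^2-1$ together with volume and weight terms. Evaluating at the min-max point collapses all of this: every term carrying an undifferentiated factor of $\Scal_f$ drops since $\Scal_f\equiv0$; by the weighted Lichnerowicz formula $D_f^2=\nabla_f^*\nabla+\tfrac14\Scal_f$ and $D_f\psi=0$ one gets $\nabla_f^*\nabla\psi=0$, so the interior spinorial terms integrate by parts to boundary integrals at infinity which vanish by AE decay; and, exactly as in Witten-type identities, the surviving algebra reassembles into the Bakry-\'Emery-Ricci tensor, giving
\[
\delta_g\mathscr{E}_g(\psi_g,f_g)[h]=c\int_M\langle\Ric+\Hess_{f_g},h\rangle\,e^{-f_g}dV
\]
with $c$ a universal constant and all boundary terms at infinity vanishing.

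Finally, substituting $h=-2\Ric$ and writing $\Ric=(\Ric+\Hess_f)-\Hess_f$,
\[
\kappa'(t)=-2c\int_M\langle\Ric+\Hess_f,\Ric\rangle\,e^{-f}dV=-2c\int_M|\Ric+\Hess_f|^2\,e^{-f}dV,
\]
since the $\Hess_f$-term integrates by parts to $-\int_M\langle\Div_f(\Ric+\Hess_f),df\rangle e^{-f}dV$, which vanishes by the weighted contracted second Bianchi identity $\Div_f(\Ric+\Hess_f)=\tfrac12\nabla\Scal_f=0$ ($\Scal_f$ being constant), the boundary terms again vanishing by AE decay. With the normalization of (\ref{eqn: energy functional}) this is $\kappa'(t)=-\tfrac12\int_M|\Ric+\Hess_f|^2e^{-f}dV$; the intermediate equality $\kappa'(t)=\tfrac{d}{dt}\int_M|\nabla\psi|^2e^{-f}dV$ is just the $t$-derivative of the identity $\kappa(g)=\int_M|\nabla\psi_g|^2e^{-f_g}dV$ recorded after Theorem~\ref{thm: existence and uniqueness of critical points, intro}. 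Since $-2\Ric$ and $-2(\Ric+\Hess_f)$ differ only by $\mathcal{L}_{\nabla f}g=2\Hess_f$, a direction tangent to the diffeomorphism orbit along which $\kappa$ is constant, and $\Ric+\Hess_f$ is a multiple of the $L^2(e^{-f}dV)$-gradient of $\kappa$, this is the asserted gradient-flow statement. The main obstacle is the metric-variation computation: executing it with the cylinder identification and verifying, after inserting the critical equations and discarding the many boundary contributions, that the surviving spinorial curvature terms assemble precisely into $\langle\Ric+\Hess_f,h\rangle$ — together with the technical task of making every weighted estimate, differentiation under the integral, and integration by parts at infinity rigorous on the noncompact manifold.
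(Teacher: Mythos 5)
Your proposal is correct and follows essentially the same route as the paper: first-variation formulas for the weighted spinorial energy, criticality of $(\psi,f)$ to kill the $\dot\psi$ and $\dot f$ contributions (the boundary terms at infinity vanishing thanks to the $(|\psi|^2-1)$ renormalization and the decay of $\dot\psi,\dot f$ established in the appendix), reduction of the metric variation to $\tfrac14\int_M\langle\dot g,\Ric_f\rangle e^{-f}dV$, and the weighted contracted Bianchi identity to pass from $\langle\Ric_f,\Ric\rangle$ to $|\Ric_f|^2$ (the paper's Lemma \ref{lem: L2 orthogonality of Ric_f and Hess_f}). The one step you assert rather than prove — that the metric variation at the critical point assembles into $c\int_M\langle\Ric+\Hess_f,h\rangle e^{-f}dV$ with all boundary terms controlled — is exactly where the paper's work lies: there the interior terms cancel entirely (via Lemma \ref{lem: div_f(T) in terms of Ric}), a single boundary term at infinity survives, and the factor $\Ric_f$ is produced by converting that boundary term back to an interior integral and invoking the time-differentiated constraint $\partial_t\Scal_f=0$; you correctly identify this as the main obstacle, and the paper's Section \ref{sec: Variational formulas} supplies precisely the missing computation.
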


Note that the right-hand-side of the monotonicity formula (\ref{eqn: weighted Dirichlet monotonicity}) is independent of the spinor. This fact may be interpreted as a parabolic analogue of Witten's formula, which expresses the ADM mass in terms of a ``test spinor,'' even though the ADM mass may be defined without reference to any spinor. 
The reason is that if the spinor solves (\ref{eqn: critical point equations}), then its weighted Dirichlet energy equals a boundary term at infinity, which is independent of the spinor by the boundary conditions.
The monotonicity formula (\ref{eqn: weighted Dirichlet monotonicity}) is proven here via the first variation of the weighted Dirichlet energy for spinors. 
While monotonicity was recently proven in \cite{BO} via an indirect argument relying on the results of Deruelle-Ozuch \cite{DO}, the proof given here is independent of said results.

The weighted variational formulas derived here are also of independent interest. For example, they imply that the ADM mass of a spin, AE manifold with nonnegative scalar curvature is constant along Ricci flow.
Constancy of the ADM mass along Ricci flow was previously proven by different means \cite{DM, L}. Here, a proof is given using Witten's formula for the mass. 

\begin{theorem}
[Constancy of mass]\label{thm: Constancy of ADM mass}
The ADM mass is constant along every spin, AE Ricci flow with nonnegative scalar curvature.
\end{theorem}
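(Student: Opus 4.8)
The plan is to derive a weighted Witten formula expressing the ADM mass $\mathfrak{m}(g)$ in terms of $\kappa(g)$ and Perelman's $\mathcal{F}$-functional evaluated at the critical weight $f_g$, and then to differentiate it along Ricci flow: the monotonicity of $\kappa$ from Theorem~\ref{thm: monotonicity intro} and Perelman's first variation of $\mathcal{F}$ will cancel, on account of an integrated Bochner identity satisfied by $f_g$.

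First, let $(\psi_g,f_g)$ be the critical point of Theorem~\ref{thm: existence and uniqueness of critical points, intro}, so that $\Scal_{f_g}=0$ and $D_{f_g}\psi_g=0$. Integrating the weighted Lichnerowicz formula against $e^{-f_g}\,dV$ and using the Dirac equation, one sees that $\kappa(g)=\int_M|\nabla\psi_g|^2e^{-f_g}\,dV$ equals the flux $\oint_{S_\infty}\langle\nabla_\nu\psi_g+\nu\cdot D_{f_g}\psi_g,\psi_g\rangle e^{-f_g}\,dA$. Since $\psi_g$ is asymptotic to a unit constant spinor and $f_g\to 0$ at infinity, the part of this integrand built from the Atiyah--Singer Dirac operator, $\nabla_\nu\psi_g+\nu\cdot D\psi_g$, is Witten's boundary expression for $\mathfrak{m}(g)$, while the remaining Clifford term $-\tfrac12\,\nu\cdot(\nabla f_g)\cdot\psi_g$ contributes $\tfrac12\oint_{S_\infty}\partial_\nu f_g\,dA=\tfrac12\int_M\Delta f_g\,dV$. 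Using $\Scal_{f_g}=0$ to write $\Scal_g+|\nabla f_g|^2=2|\nabla f_g|^2-2\Delta f_g$ gives $\int_M\Delta f_g\,dV=-\tfrac12\,\mathcal{F}(g,f_g)$ with $\mathcal{F}(g,f)=\int_M(\Scal_g+|\nabla f|^2)e^{-f}\,dV$, and hence
\[
  \mathfrak{m}(g)=c_n\Big(\kappa(g)+\tfrac14\,\mathcal{F}(g,f_g)\Big)
\]
for a positive dimensional constant $c_n$; equivalently, the mass exceeds $c_n\kappa(g)$ by a multiple of Perelman's weighted Hilbert--Einstein action.

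Next I would differentiate along the flow. Theorem~\ref{thm: monotonicity intro} gives $\kappa'(t)=-\tfrac12\int_M|\Ric+\Hess f|^2e^{-f}\,dV$. For $\mathcal{F}$ I would apply Perelman's first variation \cite{P} with $g$ varied by $-2\Ric$ and $f_{g(t)}$ by its own time derivative: the terms proportional to $\Scal_{f_{g(t)}}=0$ drop out, the boundary terms vanish by the asymptotics, and what remains is $\tfrac{d}{dt}\mathcal{F}(g(t),f_{g(t)})=2\int_M\langle\Ric,\Ric+\Hess f\rangle e^{-f}\,dV$. Consequently $\tfrac{d}{dt}\mathfrak{m}(g(t))$ is proportional to $\int_M\big(\langle\Ric,\Ric+\Hess f\rangle-|\Ric+\Hess f|^2\big)e^{-f}\,dV$, which equals $-\int_M\langle\Hess f,\Ric+\Hess f\rangle e^{-f}\,dV$ upon expanding the inner product; so it remains to show
\[
  \int_M\langle\Hess f_g,\Ric+\Hess f_g\rangle\,e^{-f_g}\,dV=0 .
\]
This follows from the weighted Bochner identity $\tfrac12\Delta_f|\nabla f|^2=|\Hess f|^2+\langle\nabla f,\nabla(\Delta_f f)\rangle+(\Ric+\Hess f)(\nabla f,\nabla f)$, with $\Delta_f:=\Delta-\nabla_{\nabla f}$: integrating it against $e^{-f_g}\,dV$ and, separately, integrating $\int_M\langle\Hess f_g,\Ric\rangle e^{-f_g}\,dV$ by parts — using $\Delta_{f_g}f_g=-\tfrac12(\Scal_g+|\nabla f_g|^2)$ and the contracted second Bianchi identity $\Div\Ric=\tfrac12\nabla\Scal_g$ — yields $\int_M|\Hess f_g|^2e^{-f_g}\,dV=-\int_M\langle\Hess f_g,\Ric\rangle e^{-f_g}\,dV$, which is exactly the claim. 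Hence $\mathfrak{m}'(t)=0$.

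The difficulty is analytic rather than algebraic. Every surface integral over $S_\infty$ used above — in the weighted Witten formula, in Perelman's variation of $\mathcal{F}$, and in the integrated Bochner identity — must be shown to vanish, which uses that an AE Ricci flow with $\Scal\ge 0$ keeps $g$ asymptotically Euclidean with its leading-order asymptotics unchanged and that $\Scal\in L^1$; the relevant decay rates are borderline in high dimensions, and controlling them is the main obstacle. One must also check that $f_{g(t)}$ is smooth in $t$ with $\partial_t f_{g(t)}$ decaying fast enough to license the integrations by parts — which follows from the implicit function theorem applied to the elliptic equation $\Scal_f=0$ together with the a priori estimates for the flow.
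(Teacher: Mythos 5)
Your argument is correct in outline but takes a genuinely different, and considerably heavier, route than the paper's. The paper never touches the weighted critical point for this theorem: it takes the \emph{unweighted} Witten spinor, applies the variational formula (\ref{eqn: derivative of weighted Dirichlet plus Scal_f term}) with $f=\dot f=0$, observes that all boundary terms except $\int_{S_\rho}|\psi|^2\nabla_\nu\Scal\,dA$ vanish by decay counting, and kills that last term by citing \cite[Lem.\ 11]{MS}. Your route instead writes $\mass(g)=4\kappa(g)+\mathcal{F}(g,f_g)$ (consistent with the weighted Witten formula of \cite{BO} quoted in Section \ref{sec: critical points}), differentiates $\kappa$ via Theorem \ref{thm: monotonicity intro} and $\mathcal{F}$ via Perelman's first variation, and cancels the two using exactly the paper's Lemma \ref{lem: L2 orthogonality of Ric_f and Hess_f}; your Bochner-plus-Bianchi derivation of that orthogonality is a valid, if more roundabout, substitute for the paper's one-line argument via the weighted Bianchi identity $\Div_f(\Ric_f-\tfrac{1}{2}\Scal_f g)=0$. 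What your approach buys is a conceptual decomposition of the mass into the monotone quantity $\kappa$ plus a correction term; what it costs is dependence on the full weighted machinery (existence of $(\psi_g,f_g)$, time-regularity of $f_g$, and Theorem \ref{thm: monotonicity intro} itself), none of which the paper's direct proof needs.

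The one point you defer rather than resolve --- ``the relevant decay rates are borderline in high dimensions'' --- has a specific answer you should name. In the first variation of $\mathcal{F}$ along $\dot g=-2\Ric$, the surviving boundary term is $\int_{S_\rho}\langle\Div(\dot g)-\nabla\tr(\dot g),\nu\rangle e^{-f}dA=\int_{S_\rho}\nabla_\nu\Scal\,e^{-f}dA$ by the contracted Bianchi identity, and naive decay counting ($\nabla\Scal=O(r^{-\tau-3})$ against area $\rho^{n-1}$) only kills it when $\tau>n-4$. For $n\geq 7$ and $\tau$ near $(n-2)/2$ one needs the McFeron--Sz\'ekelyhidi lemma $\lim_{\rho\to\infty}\int_{S_\rho}|\nabla\Scal|\,dA=0$, valid under $\Scal\geq 0$ and $\Scal\in L^1$ --- precisely the ingredient the paper invokes at the corresponding step of its own proof. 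With that citation supplied (and the standard Lee--Parker identification of the Witten boundary integral with the mass, which licenses your splitting of $\nabla_\nu\psi_g+\nu\cdot D_{f_g}\psi_g$ into the Atiyah--Singer part and the $\nabla f_g$ part), your argument closes.
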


Furthermore, the weighted variational formulas derived here generalize those from the unweighted case, which have recently received much attention: the gradient flow of the (unweighted) Dirichlet energy for spinors, introduced by Ammann-Weiss-Witt \cite{AWW}, is equivalent to a modified Ricci flow coupled to a spinor evolving parabolically in time \cite{HW} (see also \cite{AWW2, S, CP}).
Additionally, the weighted variational formulas derived here are valid on all manifolds with boundary, so the techniques developed here are expected to extend to other geometries adapted to spin methods, such as asymptotically hyperbolic manifolds \cite {Wang} and ALF manifolds \cite{Min}. 

The paper is organized as follows: Sections \ref{sec: Spin geometry of generalized cylinders} and \ref{sec: Weighted Dirac operator} give the necessary background on spin geometry on evolving manifolds and on the weighted Dirac operator. Section \ref{sec: Variational formulas} derives variational formulas for natural weighted, spinorial quantities. Section \ref{sec: Monotonicity} applies said formulas to prove the monotonicity theorems. Appendix \ref{sec: Time derivatives of weighted Witten spinors} proves the existence and regularity of time derivatives of weighted Witten spinors, and Appendix \ref{sec: weighted integration by parts} contains useful weighted integration by parts formulas.

\section*{\small\bf Acknowledgements}
The first author thanks W. Minicozzi for continual support, as well as B. Ammann, T. Ilmanen, and C. Taubes for useful discussions. Part of this work was completed while the first author was funded by a National Science Foundation Graduate Research Fellowship.

\section{Spinors on evolving manifolds}\label{sec: Spin geometry on evolving manifolds}

\subsection{Spin geometry of generalized cylinders.}\label{sec: Spin geometry of generalized cylinders}

The spin bundle, and hence the Dirac operator, depends on a choice of Riemannian metric. For two choices of Riemannian metrics, the spin bundles are isomorphic, though in general not canonically so. 
Given a 1-parameter family of Riemannian metrics, there does exist a natural identification of the spin bundles at different times, obtained via the generalized cylinder construction of \cite[\S 3 -- \S 5]{BGM}. 
This section recalls the generalized cylinder construction and the associated variational formulas, which are applied in the later sections to the special context of Ricci flows. 
The notation established here is used in the remainder of the paper.

Let $M$ be a smooth $n$-manifold admitting a spin structure, and let $(g_t)_{t\in I}$ be a 1-parameter family of Riemannian metrics on $M$ whose time derivative is denoted
\begin{equation}\label{eqn: variation of metric}
    \partial_tg=\dot{g}.
\end{equation}
Corresponding to this 1-parameter family, define the {\it generalized cylinder} by 
\begin{equation}
    \Cyl:=I\times M,
\end{equation}
equipped with the Riemannian metric 
\begin{equation}
    \bar{g}:=dt^2+g_t.
\end{equation}
For $t\in I$, abbreviate the Riemannian manifold $(M,g_t)$ by $M_t$, and sometimes simply by $M$ when the choice of $t$ is clear from the context. Connections associated with $(\Cyl,\bar{g})$ are denoted $\nablaCyl$, while those associated with $(M_t,g_t)$ are denoted $\nabla^{g_t}$, or simply $\nabla$ when the choice of $t$ is clear from the context.
The vector field $\partial_t$ on $\Cyl$ is normal to $M_t$ and has unit $\bar{g}$-length. Moreover, its integral curves are geodesics, i.e.\ 
\begin{equation}\label{eqn: time derivative is geodesic vector field}
    \overline{\nabla}_{\partial_t}\partial_t=0.
\end{equation}
Let $W$ denote the Weingarten tensor with respect to the embedding $M_t\subset \Cyl$. This tensor is defined by the condition that the Levi-Civita connections of $\Cyl$ and $M$ are related by
\begin{equation}
    \nablaCyl_X Y=\nabla_XY+\langle W(X),Y\rangle\partial_t,
\end{equation}
for all vector fields $X,Y$ on $M$. 
From the Koszul formula for the Levi-Civita connection,
it follows 
that
\begin{equation}\label{eqn: Weingarten equals -1/2 dot(g)}
    \langle W(X),Y\rangle =-\frac{1}{2}\dot{g}(X,Y).
\end{equation}
Therefore, the Levi-Civita connections of $\Cyl$ and $M$ are related by 
\begin{equation}
    \nablaCyl_XY=\nabla_XY-\frac{1}{2}\dot{g}(X,Y)\partial_t,
\end{equation}
for all vector fields $X,Y$ on $M$. Moreover, computation of the Christoffel symbols 
of $\bar{g}$ with respect to a local orthonormal frame $(e_1,\dots,e_n)$ of the metric $g_t$ implies that for any vector field $X$ on $M$,
\begin{equation}\label{eqn: cylindrical time derivative}
    \nablaCyl_{\partial_t}X=\partial_tX+\frac{1}{2}\sum_{i=1}^n\dot{g}(X,e_i)e_i.
\end{equation}

Since $\Cyl$ is homotopy equivalent to $M$, spin structures on $\Cyl$ are in bijection with those on $M$. 
A spin structure on $\Cyl$ can be restricted to a spin structure on $M=M_t$ in the following way. 
Let $\Theta:P_{\mathrm{Spin}}(\Cyl)\to P_{\mathrm{SO}}(\Cyl)$ be a spin structure on $\Cyl$. 
Embed the bundle oriented orthonormal frames of $M$, $P_{\mathrm{SO}}(M)$, into the bundle of space and time oriented orthonormal frames of $\Cyl$ restricted to $M$, $P_{\mathrm{SO}}(\Cyl)|_M$, by the map $\iota:(e_1,\dots,e_n) \mapsto (\partial_t,e_1,\dots,e_n)$. 
Then $P_{\mathrm{Spin}}(M):=\Theta^{-1}(\iota(P_{\mathrm{SO}}(M)))$ defines a spin structure on $M$.
Conversely, given a spin structure on $M$, it can be pulled back to $\Cyl$ yielding a $\tilde{GL}^+(n,\R)$-principal bundle on $\Cyl$. 
Enlarging the structure group via the embedding $\tilde{GL}^+(n,\R)\hookrightarrow \tilde{GL}^+(n+1,\R)$ covering the standard embedding 
\begin{equation}
    GL^+(n,\R)\hookrightarrow GL^+(n+1,\R) \qquad\qquad a\mapsto 
    \begin{pmatrix}
    1 & 0 \\
    0 & a 
    \end{pmatrix},
\end{equation}
yields the spin structure on $\Cyl$ which restricts to the given spin structure on $M$ \cite[\S 3 -- \S 5]{BGM}. This paper always implicitly assumes this identification of spin structures on $M$ and $\Cyl$.

Clifford multiplication on $\Cyl$ is denoted by ``$\bullet$'', while Clifford multiplication on $M_t$ is denoted by ``$\cdot$''. Recall 
that, as Hermitian vector bundles over $M_t$, there is an isometry of the complex spin bundles $\Sigma \Cyl|_{M_t}=\Sigma M_t$ when $n$ is even, while $\Sigma^+\Cyl|_{M_t}=\Sigma M_t$ when $n$ is odd. In both cases the Clifford multiplications are related by 
\begin{equation}\label{eqn: relation between Clifford multplications}
    X\cdot\psi=\partial_t\bullet X\bullet \psi.
\end{equation}
When $n$ is odd, $\Sigma M_t$ is henceforth identified with $\Sigma^+\Cyl|_{M_t}$, so that (\ref{eqn: relation between Clifford multplications}) holds.

Let $\langle \cdot,\cdot\rangle$ be the spin metric on $\Sigma \Cyl$, that is, the unique Hermitian metric on $\Sigma \Cyl$ for which Clifford multiplication by unit vectors is unitary. This metric is compatible with the spin connection in the sense that
\begin{equation}
    X\langle \phi,\psi\rangle 
    =\langle \nablaCyl_X\phi,\psi\rangle+\langle \phi,\nablaCyl_X\psi\rangle
\end{equation}
holds for all vector fields $X$ on $\Cyl$. 
Combining Equations (\ref{eqn: Weingarten equals -1/2 dot(g)}) and (\ref{eqn: relation between Clifford multplications}) 
with the local expression of the spin connection 
\begin{equation}\label{eqn: spin connection in local coordinates}
    \nablaCyl_{i}\psi
    =\partial_i\psi +\frac{1}{4}\sum_{j,k=0}^n \bar{\Gamma}_{ij}^ke_j\bullet e_k\bullet \psi
\end{equation}
in an orthonormal frame $(e_0,\dots,e_n)$ of $T\Cyl$, with $e_0=\partial_t$, implies the following relationship 
between the spin connections of $\Cyl$ and $M$: for any vector field $X$ on $M$,
\begin{equation}\label{eqn: relation between spin connections}
    \nablaCyl_X\psi
    =\nabla_X\psi+\frac{1}{4} \sum_{i=1}^n\dot{g}(X,e_i)e_i \cdot \psi.
\end{equation}
\subsection{Weighted Dirac operator.}\label{sec: Weighted Dirac operator}

The remainder of this paper employs tools from the theory of spin geometry of weighted manifolds, developed in \cite[\S 1]{BO}. For the convenience of the reader and to establish the notation for what is to come, the relevant facts are reviewed below. 

A weighted spin manifold is a spin manifold $(M^n,g)$ equipped with a function $f:M\to \R$ defining the weighted measure $e^{-f}dV$.
The weighted Dirac operator $D_f:\Gamma(\Sigma M)\to \Gamma(\Sigma M)$ of a weighted spin manifold is defined as 
\begin{equation}
    D_f=D-\frac{1}{2}(\nabla f) \cdot \;,
\end{equation}
where $D=e_i\cdot \nabla_i$ is the standard Dirac operator, namely the composition of the spin covariant derivative $\nabla$ with Clifford multiplication.
(Throughout this paper, 1-forms and vector fields on time slices $M_t$ will often be identified via the metric $g_t$ without explicit mention.)
The weighted Dirac operator is the Dirac operator associated with the modified spin connection $\nabla^f:\Gamma(\Sigma M)\to \Gamma(T^*M\otimes \Sigma M)$, defined by
\begin{equation}\label{eqn: weighted spin connection}
    \nabla_X^f\psi=\nabla_X \psi-\frac{1}{2}(\nabla_X f)\psi.
\end{equation}
The modified spin connection $\nabla^f$ is {\it not} metric compatible with the standard metric \cite[Prop. 2.5]{BHM+} on the spin bundle, $\langle\cdot,\cdot\rangle$, however, it is compatible with the modified metric $\langle \cdot,\cdot\rangle_f := \langle \cdot,\cdot\rangle e^{-f}$, that is,
\begin{equation}\label{eqn: weighted connection is weighted metric compatible}
    X(\langle \psi,\phi\rangle e^{-f})
    =\langle \nabla^f_X\psi,\phi\rangle e^{-f}+\langle \psi,\nabla^f_X\phi\rangle e^{-f},
\end{equation}
for all vector fields $X$ and spinors $\psi,\phi$. Moreover, since Clifford multiplication is parallel with respect to the standard spin connection, it is also parallel with respect to $\nabla^f$. This means that
\begin{equation}\label{eqn: Cliff mult is weighted parallel}
    \nabla_X^f(Y\cdot \psi)=Y\cdot \nabla_X^f\psi +(\nabla_XY)\cdot \psi,
\end{equation}
for all vector fields $X,Y$ and spinors $\psi$.

The weighted Dirac operator satisfies the following weighted integration by parts formula on closed manifolds
\begin{equation}\label{eqn: weighted Laplcian IBP}
    \int_M\langle \psi,D_f\phi\rangle e^{-f}\,dV
    =\int_M \langle D_f \psi,\phi\rangle e^{-f}\,dV,
\end{equation}
and hence is self-adjoint on the weighted space $L_f^2=L^2(e^{-f}\,dV)$. 
Furthermore, $D_f$ satisfies a weighted Lichnerowicz formula, which was observed by Perelman \cite[Rem.\ 1.3]{P}. To state it, let
\begin{equation}\label{eqn: defn of weighted Laplacian}
    \dL_f=\Delta  -\nabla_{\nabla f}
\end{equation} 
be the weighted Laplacian acting on spinors and let
\begin{equation}\label{eqn: defn of weighted scalar curvature}
    \Scal_f=\Scal+2\Delta f-|\nabla f|^2
\end{equation}
be Perelman's weighted scalar curvature (or P-scalar curvature).
Then the square of the weighted Dirac operator $D_f$ satisfies
\begin{equation}\label{eqn: weighted Lichnerowicz}
    D_f^2=-\dL_f+\frac{1}{4}\Scal_f.
\end{equation}
Furthermore, the weighted (Bakry-\'Emery) Ricci curvature $\Ric_f=\Ric+\Hess_f$ is proportional to the commutator of $D_f$ and $\nabla$: for any vector field $X$ and spinor $\psi$, the following weighted Ricci identity holds
\begin{equation}\label{eqn: weighted Ricci identity}
    [D_f,\nabla_X]\psi=\frac{1}{2}\Ric_f(X)\cdot\psi.
\end{equation}
Finally, the weighted Dirac operator is unitarily equivalent to the standard Dirac operator. 
Indeed, a routine calculation shows that for every spinor $\psi$,
\begin{equation}\label{eqn: unitary equivalence of Dirac and weighted Dirac}
    D_f\psi=e^{f/2}D(e^{-f/2}\psi),
\end{equation}
and the map $L^2\to L^2_f$ defined by $\psi\mapsto e^{f/2}\psi$ is unitary. 
The reader is referred to \cite[\S 1]{BO} for proofs of the above statements and other fundamental properties of weighted spin manifolds.
\section{Variational formulas}\label{sec: Variational formulas}

The purpose of this section is to compute the variations of spinorial quantities which are used in the proof of the monotonicity theorems. 
These variational formulas hold on general manifolds with boundary.
Since the spin bundle varies with the metric, the variational formulas derived here are to be understood within the framework of the generalized cylinder construction of Section \ref{sec: Spin geometry of generalized cylinders}. Throughout this section, $(M^n,g,f)$ is a weighted spin manifold, $\psi$ is a spinor on $M$, and
\begin{equation}
    \dot{g}=\partial_tg,
    \qquad \qquad 
    \dot{f}=\partial_tf,
    \qquad \qquad 
    \dot{\psi}=\overline{\nabla}_t\psi,
\end{equation} 
denote variations of $g,f$ and $\psi$. 

The variation of the gradient of a spinor involves two important tensors which are defined below. For any spinor $\psi$ on $M$, define the real symmetric 2-tensor $\langle \nabla \psi\otimes \nabla \psi\rangle $ by
\begin{equation}
    \langle \nabla \psi\otimes \nabla \psi\rangle (X,Y)
    =\Rea\langle \nabla_X\psi,\nabla_Y\psi\rangle.
\end{equation} 
The symmetry of $\langle \nabla \psi\otimes \nabla \psi\rangle$ is a consequence of the fact that the spin metric is Hermitian. Further, define the real 3-tensor $T_{\psi}$ on $M$ by
\begin{equation}\label{eqn: definition of T tensor 1}
    T_{\psi}(X,Y,Z)
    =\frac{1}{2}\Rea\biggl( \langle (X\wedge Y)\cdot \psi,\nabla_Z\psi\rangle+\langle (X\wedge Z)\cdot \psi,\nabla_Y\psi\rangle\biggr).
\end{equation}
By construction $T_{\psi}$ is symmetric in the second and third components; that is, 
\begin{equation}
    T_{\psi}(X,Y,Z)=T_{\psi}(X,Z,Y).
\end{equation}
Consequently, the 2-tensor $\Div( T_{\psi})=(\nabla_kT_{\psi})(e_k,\cdot,\cdot)$ is symmetric. Also, for later use, note that the formula $X\wedge Y=X\cdot Y +\langle X,Y\rangle$ and anti-Hermiticity of Clifford multiplication implies that 
\begin{align}\label{eqn: defintion of T tensor 2}
    T_{\psi}(X,Y,Z)
    =\frac{1}{2}\Rea \langle X\cdot \psi,Y\cdot \nabla_Z\psi+Z\cdot \nabla_Y\psi\rangle 
    -\frac{1}{4}(\langle X,Y\rangle Z+\langle X,Z\rangle Y)|\psi|^2.
\end{align}
A derivation of the following variational formula can be found in \cite{AWW} or \cite[p.\ 65]{S}.

\begin{proposition}
[Variation of $|\nabla \psi|^2$]\label{prop: variation of spinorial gradient squared}
The squared norm of the gradient of a spinor evolves by
\begin{equation}
    \partial_t|\nabla \psi|^2
    =-\langle \dot{g},\langle \nabla \psi\otimes \nabla \psi\rangle \rangle 
        +\frac{1}{2}\langle \nabla \dot{g},T_{\psi}\rangle
        +2\Rea\langle \nabla \dot{\psi},\nabla \psi\rangle.
\end{equation}
\end{proposition}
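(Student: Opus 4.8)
The plan is to differentiate $|\nabla\psi|^2 = \Rea\langle\nabla_i\psi,\nabla_i\psi\rangle$ in $t$ using the cylindrical connection $\nablaCyl_{\partial_t}$ on the generalized cylinder, so that the spin-bundle-along-$t$ identification of Section~\ref{sec: Spin geometry of generalized cylinders} is built in. First I would write $|\nabla\psi|^2 = \sum_i g^{ij}\,\Rea\langle\nabla_i\psi,\nabla_j\psi\rangle$ (or work in a frame orthonormal at the point and time of interest, deferring the $g^{ij}$-variation) and apply $\partial_t$. Three kinds of terms appear: the variation of the inverse metric, which after contracting with $\langle\nabla\psi\otimes\nabla\psi\rangle$ produces $-\langle\dot g,\langle\nabla\psi\otimes\nabla\psi\rangle\rangle$ (using $\partial_t g^{ij}=-\dot g^{ij}$); the variation of the spinor derivative $\nablaCyl_{\partial_t}(\nabla_{e_i}\psi)$; and, if one does not choose the frame adapted, the variation of the frame vectors $e_i$. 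The cleanest route is to fix a $t$-independent local frame (or extend an orthonormal-at-$t_0$ frame by $\nablaCyl_{\partial_t}e_i = 0$ via \eqref{eqn: cylindrical time derivative}, which reproduces the $\dot g$-frame correction) and handle the metric contraction separately.

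The heart of the computation is commuting $\nablaCyl_{\partial_t}$ past $\nabla_{e_i}$. Using the curvature of the cylinder connection one has $\nablaCyl_{\partial_t}\nabla_{e_i}\psi = \nabla_{e_i}\nablaCyl_{\partial_t}\psi + \overline{R}(\partial_t,e_i)\psi + \nabla_{[\partial_t,e_i]}\psi$, but rather than invoking the full curvature tensor I would instead substitute the explicit relation \eqref{eqn: relation between spin connections}, $\nablaCyl_{e_i}\psi = \nabla_{e_i}\psi + \tfrac14\sum_j\dot g(e_i,e_j)e_j\cdot\psi$, rearrange to express $\nabla_{e_i}\psi$ in terms of $\nablaCyl_{e_i}\psi$, differentiate in $\nablaCyl_{\partial_t}$, and then commute $\nablaCyl_{\partial_t}$ with $\nablaCyl_{e_i}$ using $[\partial_t,e_i]=0$ and $\overline\nabla_{\partial_t}e_i=0$ (so the bracket term vanishes and only $\overline R(\partial_t,e_i)\psi$ survives). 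The curvature term $\overline R(\partial_t,e_i)\psi$ of the cylinder is a standard quantity involving $\nabla\dot g$ and $\dot g\ast\dot g$; pairing it against $\nabla_{e_i}\psi$ and taking real parts, together with the $\tfrac14\dot g\cdot$ correction terms and their derivatives, is exactly what must reorganize into $\tfrac12\langle\nabla\dot g,T_\psi\rangle$, with the $T_\psi$-tensor \eqref{eqn: definition of T tensor 1} capturing the antisymmetrized $(e_i\wedge e_j)\cdot\psi$ pairings. The remaining term $2\Rea\langle\nablaCyl_{\partial_t}\nabla_{e_i}\psi,\nabla_{e_i}\psi\rangle$ with $\nablaCyl_{\partial_t}\psi=\dot\psi$ moved to the outside gives $2\Rea\langle\nabla\dot\psi,\nabla\psi\rangle$.

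The main obstacle I expect is bookkeeping the several first-order-in-$\dot g$ correction terms so that they assemble precisely into $\tfrac12\langle\nabla\dot g,T_\psi\rangle$ rather than into $T_\psi$ with the ``wrong'' symmetrization or an extra $|\psi|^2$-type term: one must use the symmetry $T_\psi(X,Y,Z)=T_\psi(X,Z,Y)$ and the identity \eqref{eqn: defintion of T tensor 2} to see that the $\langle X,Y\rangle$-trace pieces coming from $X\wedge Y = X\cdot Y + \langle X,Y\rangle$ cancel against the contributions of $\nabla_i(\dot g(e_i,e_j))e_j\cdot\psi$ and the metric-compatibility failure is not an issue since $\nablaCyl$ is metric. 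A secondary subtlety is making sure the real-part operation interacts correctly with anti-Hermiticity of Clifford multiplication (so that, e.g., $\Rea\langle e_j\cdot\psi,\nabla_i\psi\rangle$ terms are the ones that genuinely enter $T_\psi$ and the purely imaginary pieces drop). Since a derivation is already available in \cite{AWW} and \cite[p.~65]{S}, I would cite those for the routine frame computation and only spell out the cylinder-curvature commutation and the regrouping into $T_\psi$, which is where the weighted/cylindrical viewpoint of this paper differs in emphasis.
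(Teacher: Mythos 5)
The paper does not actually prove this proposition---it defers entirely to \cite{AWW} and \cite[p.~65]{S}---and your sketch is a correct outline of exactly that standard derivation in the generalized-cylinder/Bourguignon--Gauduchon framework, ending with the same citation, so the approaches coincide. One small caution for when you write out the details: the two frame conditions $[\partial_t,e_i]=0$ and $\nablaCyl_{\partial_t}e_i=0$ that you invoke simultaneously are incompatible unless $\dot g=0$ (by (\ref{eqn: cylindrical time derivative})); you must commit to one choice, and the term $-\langle \dot{g},\langle \nabla \psi\otimes \nabla \psi\rangle \rangle$ then arises either from the inverse-metric variation (for a $t$-independent frame) or from the frame variation $\nablaCyl_{\partial_t}e_i=\tfrac12\dot g(e_i,e_j)e_j$ (for a $t$-parallel orthonormal frame), but not from both.
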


The previous proposition shows that the variation of the squared norm of the gradient of a spinor depends on the term $\langle \nabla \dot{g},T_{\psi}\rangle$, which, upon integration by parts, can be written as the inner product of $\dot{g}$ with the weighted divergence $\Div_f(T_{\psi})$. 
As is shown below, the weighted divergence of $T_{\psi}$ depends on the Lie derivative of the metric with respect to the vector field $V_{\psi,f}$, defined by 
\begin{equation}
    V_{\psi,f}=\Rea\langle \psi,e_i\cdot D_f\psi\rangle e_i.
\end{equation}

\begin{lemma}\label{lem: Lie derivative}
The Lie derivative of the metric in the direction of $V_{\psi,f}$ is 
\begin{equation}
    (\Ld_{V_{\psi,f}}g)(X,Y)
        =\Rea\left(\langle \psi,X\cdot \nabla_YD_f\psi+Y\cdot \nabla_XD_f\psi\rangle 
        -\langle D_f\psi,X\cdot \nabla_Y\psi+Y\cdot \nabla_X\psi\rangle \right).
\end{equation}
\end{lemma}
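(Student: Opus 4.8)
The plan is to compute $(\mathscr{L}_{V_{\psi,f}}g)(X,Y)$ directly from the definition $(\mathscr{L}_V g)(X,Y) = \langle \nabla_X V, Y\rangle + \langle \nabla_Y V, X\rangle$, using the explicit formula $V_{\psi,f} = \Rea\langle \psi, e_i\cdot D_f\psi\rangle\, e_i$. By symmetrization it suffices to compute $\langle \nabla_X V_{\psi,f}, Y\rangle$ and then add the term with $X$ and $Y$ swapped. Writing $V_{\psi,f} = g(V_{\psi,f},e_i)\,e_i$ with components $V^i = \Rea\langle\psi, e_i\cdot D_f\psi\rangle$, one has $\langle \nabla_X V_{\psi,f}, Y\rangle = X\big(\Rea\langle\psi, Y\cdot D_f\psi\rangle\big) - \Rea\langle\psi, (\nabla_X Y)\cdot D_f\psi\rangle$ after moving the frame derivatives appropriately; the cleanest route is to evaluate at a point using a local frame that is normal at that point, so that $\nabla_X e_i = 0$ there and the bookkeeping of Christoffel terms disappears.

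The key steps, in order: (1) Fix $p\in M$ and choose the $e_i$ to be a synchronous (normal) frame at $p$, so $\nabla e_i = 0$ at $p$; then $\langle\nabla_X V_{\psi,f}, Y\rangle\big|_p = X\big(\Rea\langle\psi, Y\cdot D_f\psi\rangle\big)\big|_p$ for $Y$ extended with $\nabla Y = 0$ at $p$. (2) Apply the Leibniz rule for the spin connection together with compatibility of the spin metric with $\nabla$ and the fact that Clifford multiplication is parallel (equation \eqref{eqn: Cliff mult is weighted parallel}, or its unweighted version): this yields $X\Rea\langle\psi, Y\cdot D_f\psi\rangle = \Rea\langle\nabla_X\psi, Y\cdot D_f\psi\rangle + \Rea\langle\psi, Y\cdot \nabla_X D_f\psi\rangle$ at $p$, since the $\nabla_X Y$ term vanishes. (3) Symmetrize in $X,Y$ to obtain
\begin{equation*}
(\mathscr{L}_{V_{\psi,f}}g)(X,Y) = \Rea\Big(\langle\psi, X\cdot\nabla_Y D_f\psi + Y\cdot\nabla_X D_f\psi\rangle + \langle\nabla_X\psi, Y\cdot D_f\psi\rangle + \langle\nabla_Y\psi, X\cdot D_f\psi\rangle\Big).
\end{equation*}
(4) Rewrite the last two terms: using anti-Hermiticity of Clifford multiplication by vectors, $\Rea\langle\nabla_X\psi, Y\cdot D_f\psi\rangle = -\Rea\langle Y\cdot\nabla_X\psi, D_f\psi\rangle = -\Rea\langle D_f\psi, Y\cdot\nabla_X\psi\rangle$, and similarly for the other term, converting $+\langle\nabla_X\psi,Y\cdot D_f\psi\rangle + \langle\nabla_Y\psi,X\cdot D_f\psi\rangle$ into $-\langle D_f\psi, X\cdot\nabla_Y\psi + Y\cdot\nabla_X\psi\rangle$. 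Combining gives exactly the claimed formula.

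The main obstacle is not conceptual but bookkeeping: one must be careful that $D_f\psi$ is itself a spinor (not a differential operator applied to $\psi$ inside the inner product), so the only differentiation happening is $\nabla_X$ applied to the spinor $D_f\psi$, and that all Christoffel/frame-derivative terms genuinely cancel in the normal frame — equivalently, one should verify that the expression $\Rea\langle\psi, Y\cdot D_f\psi\rangle e_{(\cdot)}$ transforms as a genuine vector field so that $\mathscr{L}_{V_{\psi,f}}g$ is computed correctly. A secondary subtlety is the sign and reality bookkeeping when passing Clifford multiplication across the Hermitian inner product; it is worth recording once that for a real vector $Y$ and spinors $\phi,\chi$ one has $\Rea\langle Y\cdot\phi,\chi\rangle = -\Rea\langle\phi, Y\cdot\chi\rangle$, which is used twice in step (4). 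Everything else is a short, direct computation.
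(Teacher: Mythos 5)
Your proposal is correct and follows essentially the same route as the paper's proof: evaluate in a synchronous orthonormal frame at a point, expand $\nabla_i V_j+\nabla_j V_i$ by the Leibniz rule and metric compatibility of the spin connection, and use anti-Hermiticity of Clifford multiplication to move $e_j\cdot$ across the real part of the inner product. No gaps.
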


\begin{proof}
Choose a local orthonormal frame $e_1,\dots, e_n$ of $TM$ for which $\nabla e_i=0$ at a point $p$. At $p$,
\begin{align}
    (\Ld_{V_{\psi,f}}g)_{ij}
        &=\nabla_iV_j+\nabla_jV_i
        \\
        &=\Rea \biggl(
        \langle \nabla_i\psi,e_j\cdot D_f\psi\rangle 
        +\langle \psi,e_j\cdot\nabla_iD_f\psi\rangle 
        +\langle \nabla_j\psi,e_i\cdot D_f\psi\rangle 
        +\langle \psi,e_i\cdot\nabla_jD_f\psi\rangle 
        \biggr)
        \nonumber\\
        &=\Rea \biggl(
        \langle \psi,e_i\cdot \nabla_jD_f\psi+e_j\cdot \nabla_iD_f\psi\rangle 
        -\langle D_f\psi,e_i\cdot \nabla_j\psi+e_j\cdot \nabla_i\psi\rangle 
        \biggr).
        \nonumber
\end{align}
\end{proof}

The next lemma is crucial for applications to Ricci flow, because it shows that the weighted divergence $\Div_f(T_{\psi})$, appearing in the variation of the weighted Dirichlet energy $\int_M|\nabla \psi|^2\,e^{-f}$, is closely related to the weighted Ricci curvature $\Ric_f$.

\begin{lemma}\label{lem: div_f(T) in terms of Ric}
The 2-tensor $\Div_f(T_{\psi})$ satisfies
\begin{align}
    \Div_f(T_{\psi})(X,Y)
        &=-\frac{1}{2}\Ric_f(X,Y)|\psi|^2 
        +\frac{1}{2}\Hess_{|\psi|^2}(X,Y) 
        -2\langle \nabla_X\psi,\nabla_Y\psi\rangle\\
        &\qquad\qquad \qquad 
        +\Rea \langle D_f\psi,X\cdot \nabla_Y\psi+Y\cdot \nabla_X\psi\rangle
        +\frac{1}{2}\Ld_{V_{\psi,f}}g
        .\nonumber
\end{align}
\end{lemma}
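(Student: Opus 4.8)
The plan is to differentiate the pointwise identity (\ref{eqn: defintion of T tensor 2}) for $T_\psi$ directly. I fix $p\in M$, choose a local orthonormal frame $(e_i)$ with $\nabla e_i|_p=0$, and extend $X,Y$ so that $\nabla X|_p=\nabla Y|_p=0$; all computations are at $p$. Writing the weighted divergence as $\Div_f(T_\psi)(X,Y)=\Div(T_\psi)(X,Y)-T_\psi(\nabla f,X,Y)=\sum_k(\nabla_{e_k}T_\psi)(e_k,X,Y)-T_\psi(\nabla f,X,Y)$, the first sum equals $\sum_k\nabla_{e_k}\big(T_\psi(e_k,X,Y)\big)$ since every vector field in sight is $\nabla$-parallel at $p$. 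Substituting (\ref{eqn: defintion of T tensor 2}) with first argument $e_k$, the pointwise term $-\tfrac14(\langle e_k,X\rangle\, Y(|\psi|^2)+\langle e_k,Y\rangle\, X(|\psi|^2))$ differentiates to $-\tfrac12\Hess_{|\psi|^2}(X,Y)$, while in the Clifford term one uses $\sum_k e_k\cdot\nabla_{e_k}=D$ to split off $\tfrac12\Rea\langle D\psi,X\cdot\nabla_Y\psi+Y\cdot\nabla_X\psi\rangle$, leaving $\tfrac12\sum_k\Rea\langle e_k\cdot\psi,\,X\cdot\nabla_{e_k}\nabla_Y\psi+Y\cdot\nabla_{e_k}\nabla_X\psi\rangle$.

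The heart of the argument is a Bochner-type manipulation of this double-derivative term. Moving $e_k$ across via $e_k\cdot X=-X\cdot e_k-2\langle e_k,X\rangle$ and resumming gives $\sum_k\langle e_k\cdot\psi,X\cdot\nabla_{e_k}\nabla_Y\psi\rangle=\langle\psi,X\cdot D(\nabla_Y\psi)\rangle+2\langle\psi,\nabla_X\nabla_Y\psi\rangle$, and symmetrically with $X\leftrightarrow Y$. Commuting covariant derivatives, $D(\nabla_Y\psi)=\nabla_Y(D\psi)+\sum_k e_k\cdot R(e_k,Y)\psi=\nabla_Y(D\psi)+\tfrac12\Ric(Y)\cdot\psi$ by the spinorial curvature identity (the $f=0$ case of (\ref{eqn: weighted Ricci identity})); and, using $\Rea\langle\psi,\nabla_Z\psi\rangle=\tfrac12 Z(|\psi|^2)$ together with the symmetry in $X,Y$, the terms $2\Rea\langle\psi,\nabla_X\nabla_Y\psi\rangle$ contribute $\Hess_{|\psi|^2}(X,Y)-2\langle\nabla_X\psi,\nabla_Y\psi\rangle$. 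The quadratic curvature piece $\tfrac14\Rea\langle\psi,X\cdot\Ric(Y)\cdot\psi+Y\cdot\Ric(X)\cdot\psi\rangle$ equals $-\tfrac12\Ric(X,Y)|\psi|^2$, since $\Rea\langle\psi,A\psi\rangle=\tfrac12\Rea\langle\psi,(A+A^*)\psi\rangle$ and $X\cdot\Ric(Y)\cdot+\Ric(Y)\cdot X\cdot=-2\Ric(X,Y)$. Collecting, $\Div(T_\psi)(X,Y)=-\tfrac12\Ric(X,Y)|\psi|^2+\tfrac12\Hess_{|\psi|^2}(X,Y)-2\langle\nabla_X\psi,\nabla_Y\psi\rangle+\tfrac12\Rea\langle D\psi,X\cdot\nabla_Y\psi+Y\cdot\nabla_X\psi\rangle+\tfrac12\Rea\langle\psi,X\cdot\nabla_Y(D\psi)+Y\cdot\nabla_X(D\psi)\rangle$ (the $-\tfrac12\Hess_{|\psi|^2}$ and the $+\Hess_{|\psi|^2}$ already combined).

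Finally I introduce the weight. Substituting $D\psi=D_f\psi+\tfrac12(\nabla f)\cdot\psi$ and, by the Leibniz rule, $\nabla_Y(D\psi)=\nabla_Y(D_f\psi)+\tfrac12\Hess_f(Y)\cdot\psi+\tfrac12(\nabla f)\cdot\nabla_Y\psi$, and adding $-T_\psi(\nabla f,X,Y)=-\tfrac12\Rea\langle(\nabla f)\cdot\psi,X\cdot\nabla_Y\psi+Y\cdot\nabla_X\psi\rangle+\tfrac14(\langle\nabla f,X\rangle Y(|\psi|^2)+\langle\nabla f,Y\rangle X(|\psi|^2))$ from (\ref{eqn: defintion of T tensor 2}), three bookkeeping checks finish the proof: (i) two contributions $\tfrac12\Rea\langle D_f\psi,X\cdot\nabla_Y\psi+Y\cdot\nabla_X\psi\rangle$ — one arising directly, one from Lemma \ref{lem: Lie derivative} applied to $\tfrac12\Rea\langle\psi,X\cdot\nabla_Y(D_f\psi)+Y\cdot\nabla_X(D_f\psi)\rangle=\tfrac12(\Ld_{V_{\psi,f}}g)(X,Y)+\tfrac12\Rea\langle D_f\psi,X\cdot\nabla_Y\psi+Y\cdot\nabla_X\psi\rangle$ — add to give the stated $\Rea\langle D_f\psi,X\cdot\nabla_Y\psi+Y\cdot\nabla_X\psi\rangle$ term together with $\tfrac12\Ld_{V_{\psi,f}}g$; (ii) $\tfrac14\Rea\langle\psi,X\cdot\Hess_f(Y)\cdot\psi+Y\cdot\Hess_f(X)\cdot\psi\rangle=-\tfrac12\Hess_f(X,Y)|\psi|^2$ by the same anticommutator identity as for $\Ric$, upgrading $-\tfrac12\Ric$ to $-\tfrac12\Ric_f=-\tfrac12(\Ric+\Hess_f)$; and (iii) all remaining terms involving $(\nabla f)\cdot\psi$ or $\langle\nabla f,X\rangle Y(|\psi|^2)$ cancel in pairs, again via $(\nabla f)\cdot X=-X\cdot(\nabla f)-2\langle\nabla f,X\rangle$ and $\Rea\langle\psi,\nabla_Z\psi\rangle=\tfrac12 Z(|\psi|^2)$. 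I expect the main obstacle to be precisely this last round of cancellations, together with keeping every sign straight through the Clifford reorderings; the curvature identity and Lemma \ref{lem: Lie derivative} absorb the conceptually substantive steps, so what remains is careful bookkeeping.
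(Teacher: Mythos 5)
Your proof is correct and follows essentially the same route as the paper's: a normal-frame computation of $\Div(T_\psi)$ from (\ref{eqn: defintion of T tensor 2}), Clifford reordering to produce Dirac operators plus double covariant derivatives, the Ricci commutation identity, the symmetry of $\Hess_{|\psi|^2}$, and Lemma \ref{lem: Lie derivative} to produce the $\Ld_{V_{\psi,f}}g$ term. The only (immaterial) difference is the order of operations: the paper first absorbs $T_\psi(\nabla f,\cdot,\cdot)$ to convert $D$ into $D_f$ and then applies the weighted Ricci identity (\ref{eqn: weighted Ricci identity}), whereas you commute with the unweighted identity first and recover the $\Hess_f$ part of $\Ric_f$ from the Leibniz rule on $(\nabla f)\cdot\psi$; your final cancellations in (iii) do check out.
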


\begin{proof}
For this proof, write $T$ for $T_{\psi}$ to simplify notation. 
In an orthonormal frame, Definition (\ref{eqn: defintion of T tensor 2}) of $T$ implies that
\begin{align}
    T_{ijk}
    &=\frac{1}{2}\Rea \langle e_i\cdot \psi,e_j\cdot \nabla_k\psi+e_k\cdot \nabla_j\psi\rangle
    -\frac{1}{4}\left(\delta_{ij}e_k+\delta_{ik}e_j\right)|\psi|^2. 
\end{align}
Fix a point $p\in M$, and choose a local orthonormal frame of $TM$ with $\nabla e_i=0$ at $p$. Then, at $p$, 
\begin{align}
    \Div(T)_{jk}
    &=\nabla_i T_{ijk} \\
    &=\frac{1}{2}\Rea\left(
    \langle D\psi,e_j\cdot \nabla_k\psi+e_k\cdot \nabla_j\psi\rangle \right.\nonumber \\
    &\qquad \qquad \qquad \left.
    +\langle e_i\cdot \psi,e_j\cdot \nabla_i\nabla_k\psi+e_k\cdot \nabla_i\nabla_j\psi\rangle  
    -\Hess_{|\psi|^2}(e_j,e_k)
    \right)  \nonumber \\
    &=\frac{1}{2}\Rea\left(
    \langle D\psi,e_j\cdot \nabla_k\psi+e_k\cdot \nabla_j\psi\rangle \right.\nonumber \\
    &\qquad \qquad \qquad \left.
    -\langle \psi,e_i\cdot e_j\cdot \nabla_i\nabla_k\psi+e_i\cdot e_k\cdot \nabla_i\nabla_j\psi\rangle  
    -\Hess_{|\psi|^2}(e_j,e_k)
    \right)  \nonumber \\
    &=\frac{1}{2}\Rea\left(
    \langle D\psi,e_j\cdot \nabla_k\psi+e_k\cdot \nabla_j\psi\rangle 
    +\langle \psi,e_j\cdot D\nabla_k\psi+e_k\cdot D\nabla_j\psi\rangle \right.\nonumber \\
    &\qquad \qquad \qquad \left.
    +2\langle \psi, \nabla_j\nabla_k\psi+\nabla_k\nabla_j\psi\rangle  -\Hess_{|\psi|^2}(e_j,e_k)
    \right) .
    \nonumber
\end{align}
Recall that 
$
    \Div_f(T)=\Div(T)-T(\nabla f,\cdot,\cdot).
$
Then note that (\ref{eqn: defintion of T tensor 2}) implies
\begin{align}\label{eqn: T tensor contracted with grad f}
    T(\nabla f,e_j,e_k)
    &=\frac{1}{2}\Rea \langle (\nabla f)\cdot \psi,e_j\cdot \nabla_k\psi+e_k\cdot \nabla_j\psi\rangle \\
    &\qquad\qquad\qquad
    -\frac{1}{4}\left((\nabla_jf)e_k+(\nabla_kf)e_j\right)|\psi|^2. \nonumber\\
    &=\frac{1}{2}\Rea \left( \frac{1}{2}\langle (\nabla f)\cdot \psi,e_j\cdot \nabla_k\psi+e_k\cdot \nabla_j\psi\rangle
    \right.\nonumber\\
    &\qquad \qquad \qquad \left. 
    -\frac{1}{2}\langle \psi,(\nabla f)\cdot e_j\cdot \nabla_k\psi+(\nabla f)\cdot e_k\cdot \nabla_j\psi\rangle\right)\nonumber\\
    &\qquad \qquad \qquad\qquad \qquad \qquad
    -\frac{1}{4}\left((\nabla_jf)e_k+(\nabla_kf)e_j\right)|\psi|^2. \nonumber\\
    &=\frac{1}{2}\Rea \left( \frac{1}{2}\langle (\nabla f)\cdot \psi,e_j\cdot \nabla_k\psi+e_k\cdot \nabla_j\psi\rangle
    \right.\nonumber\\
    &\qquad \qquad \qquad \left. 
    +\frac{1}{2}\langle \psi,e_j\cdot (\nabla f)\cdot \nabla_k\psi+e_k\cdot (\nabla f)\cdot \nabla_j\psi\rangle\right). \nonumber
\end{align}
Combining the last equation with the one for $\Div(T)_{jk}$ above and using the definition of the weighted Dirac operator implies
\begin{align}
    \Div_f(T)_{jk}
    &=\frac{1}{2}\Rea\left(
    \langle D_f\psi,e_j\cdot \nabla_k\psi+e_k\cdot \nabla_j\psi\rangle 
    +\langle \psi,e_j\cdot D_f\nabla_k\psi+e_k\cdot D_f\nabla_j\psi\rangle \right.\nonumber \\
    &\qquad \qquad \qquad \left.
    +2\langle \psi, \nabla_j\nabla_k\psi+\nabla_k\nabla_j\psi\rangle  -\Hess_{|\psi|^2}(e_j,e_k)
    \right) .
\end{align}
When the weighted Ricci identity (\ref{eqn: weighted Ricci identity})
is applied to the second term above, and the third term is rewritten using the symmetry of the Hessian,
\begin{equation}
    \Hess_{|\psi|^2}(e_j,e_k)=2\Rea \left(\langle \psi,\nabla_j\nabla_k\psi\rangle +\langle \nabla_j\psi,\nabla_k\psi\rangle\right),
\end{equation}
and the symmetry of $\Rea\langle \cdot,\cdot\rangle$, it follows that
\begin{align}\label{eqn: intermediate div(T) calculation}
    \Div_f(T)_{jk}
    &=\frac{1}{2}\Rea\left(
    \langle D_f\psi,e_j\cdot \nabla_k\psi+e_k\cdot \nabla_j\psi\rangle 
    +\langle \psi,e_j\cdot \nabla_kD_f\psi+e_k\cdot \nabla_jD_f\psi\rangle \right. \\
    &\qquad \qquad \qquad \left.
    +\frac{1}{2}\langle \psi,(\Ric_f)_{kl}e_j\cdot e_l\cdot \psi +(\Ric_f)_{jl}e_k\cdot e_l\cdot \psi\rangle \right. \nonumber \\
    &\qquad \qquad \qquad \left.
    -4\langle \nabla_j\psi,\nabla_k\psi\rangle   +\Hess_{|\psi|^2}(e_j,e_k)
    \right) .\nonumber \\
    &=\frac{1}{2}\Rea\left(
    \langle D_f\psi,e_j\cdot \nabla_k\psi+e_k\cdot \nabla_j\psi\rangle 
    +\langle \psi,e_j\cdot \nabla_kD_f\psi+e_k\cdot \nabla_jD_f\psi\rangle \right.\nonumber \\
    &\qquad \qquad \qquad \left.
    -(\Ric_f)_{jk}|\psi|^2
    -4\langle \nabla_j\psi,\nabla_k\psi\rangle   +\Hess_{|\psi|^2}(e_j,e_k)
    \right) .\nonumber 
\end{align}
Using Lemma \ref{lem: Lie derivative}, the second term above can be rewritten in terms of the first and $\Ld_{V_{\psi,f}}g$, as claimed.
\end{proof}

\begin{proposition}\label{prop: variation of weighted Dirichlet energy}
The variation of the weighted spinorial Dirichlet energy is
\begin{align}
    \frac{d}{dt}&\int_M|\nabla \psi|^2\,e^{-f}dV \\
    &=\int_M\left(
        \left(\frac{1}{2}\tr(\dot{g})-\dot{f}\right)|\nabla \psi|^2
        -2\Rea\langle \dot{\psi},\Delta_f\psi\rangle
        -\left\langle \dot{g},\frac{1}{2}\Div_f(T_{\psi})+\langle \nabla \psi\otimes \nabla \psi\rangle\right\rangle
        \right)e^{-f}dV 
        \nonumber\\ &\qquad\qquad\qquad
        +\int_{\partial M}\left(
        \frac{1}{2}\langle \dot{g},T_{\psi}(\nu,\cdot)\rangle 
        +2\Rea\langle \dot{\psi},\nabla_{\nu}\psi\rangle\right)e^{-f}dA.
        \nonumber
\end{align}
\end{proposition}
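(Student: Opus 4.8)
The plan is to differentiate under the integral sign, then integrate by parts the two terms that carry a covariant derivative of a variation, so as to trade $\nabla\dot g$ and $\nabla\dot\psi$ for $\dot g$ and $\dot\psi$.

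First I would apply the product rule to the integrand $|\nabla\psi|^2\,e^{-f}\,dV_g$, using the standard identity $\partial_t\,dV_g=\tfrac12\tr(\dot{g})\,dV_g$, the elementary identity $\partial_t(e^{-f})=-\dot{f}\,e^{-f}$, and Proposition \ref{prop: variation of spinorial gradient squared} for $\partial_t|\nabla\psi|^2$. This gives
\begin{align*}
\frac{d}{dt}\int_M|\nabla\psi|^2 e^{-f}dV
&=\int_M\Big(-\langle\dot{g},\langle\nabla\psi\otimes\nabla\psi\rangle\rangle
+\tfrac12\langle\nabla\dot{g},T_{\psi}\rangle
+2\Rea\langle\nabla\dot{\psi},\nabla\psi\rangle\\
&\qquad\qquad\qquad
+\big(\tfrac12\tr(\dot{g})-\dot{f}\big)|\nabla\psi|^2\Big)e^{-f}dV,
\end{align*}
with the differentiation under the integral justified by the compactness of $M$ (or the standing decay hypotheses). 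It remains to rewrite the two middle terms.

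Next I would integrate by parts using the weighted divergence theorem of Appendix \ref{sec: weighted integration by parts}, namely $\int_M\Div_f(Z)\,e^{-f}dV=\int_{\partial M}\langle Z,\nu\rangle\,e^{-f}dA$ for a vector field $Z$. For the first term, apply this to the vector field $W$ with $W_i=\dot{g}_{jk}(T_{\psi})_{ijk}$; using that $T_{\psi}$ is symmetric in its last two slots (so the contraction $\langle\nabla\dot g,T_\psi\rangle$ against the symmetric tensor $\dot g$ is unambiguously over the first slot, consistent with $\Div(T_{\psi})=(\nabla_kT_{\psi})(e_k,\cdot,\cdot)$) one computes $\Div_f(W)=\langle\nabla\dot{g},T_{\psi}\rangle+\langle\dot{g},\Div_f(T_{\psi})\rangle$, whence
\[
\int_M\langle\nabla\dot{g},T_{\psi}\rangle e^{-f}dV
=-\int_M\langle\dot{g},\Div_f(T_{\psi})\rangle e^{-f}dV
+\int_{\partial M}\langle\dot{g},T_{\psi}(\nu,\cdot)\rangle e^{-f}dA.
\]
For the second term, apply the weighted divergence theorem to $Z_i=\Rea\langle\dot{\psi},\nabla_i\psi\rangle$; since $\Div_f(Z)=\Rea\langle\nabla\dot{\psi},\nabla\psi\rangle+\Rea\langle\dot{\psi},\Delta_f\psi\rangle$ with $\Delta_f=\Delta-\nabla_{\nabla f}$ the weighted spinor Laplacian of (\ref{eqn: defn of weighted Laplacian}), this yields
\[
\int_M\Rea\langle\nabla\dot{\psi},\nabla\psi\rangle e^{-f}dV
=-\int_M\Rea\langle\dot{\psi},\Delta_f\psi\rangle e^{-f}dV
+\int_{\partial M}\Rea\langle\dot{\psi},\nabla_{\nu}\psi\rangle e^{-f}dA.
\]
Substituting these two identities into the displayed formula above and collecting the bulk terms (into $(\tfrac12\tr\dot g-\dot f)|\nabla\psi|^2-2\Rea\langle\dot\psi,\Delta_f\psi\rangle-\langle\dot g,\tfrac12\Div_f(T_\psi)+\langle\nabla\psi\otimes\nabla\psi\rangle\rangle$) and the boundary terms gives precisely the claimed formula.

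The only genuinely delicate point is the bookkeeping in the weighted integration by parts: one must carefully track the extra terms $-T_{\psi}(\nabla f,\cdot,\cdot)$ and $-\nabla_{\nabla f}\psi$ that upgrade $\Div$ to $\Div_f$ and $\Delta$ to $\Delta_f$, keep the signs straight, and ensure the boundary integrals emerge with the correct weight $e^{-f}dA$ and outward unit normal $\nu$. There is no analytic subtlety beyond this; everything is a pointwise tensor identity followed by one application of the divergence theorem.
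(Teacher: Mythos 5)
Your proposal is correct and follows the paper's proof exactly: differentiate under the integral using $\partial_t(e^{-f}dV)=(\tfrac12\tr\dot g-\dot f)e^{-f}dV$ and Proposition \ref{prop: variation of spinorial gradient squared}, then apply the weighted divergence theorem to the terms $\tfrac12\langle\nabla\dot g,T_\psi\rangle$ and $2\Rea\langle\nabla\dot\psi,\nabla\psi\rangle$. Your explicit choice of the vector fields $W$ and $Z$ and the resulting identities for $\Div_f(W)$ and $\Div_f(Z)$ are exactly the bookkeeping the paper defers to Appendix \ref{sec: weighted integration by parts}, and the signs and boundary terms all check out.
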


\begin{proof}
The proof consists of computing the derivative
\begin{equation}
    \partial_t\left(|\nabla \psi|^2\,e^{-f}dV\right)
        =\partial_t(|\nabla \psi|^2)\,e^{-f}dV+|\nabla \psi|^2\partial_t(e^{-f}dV).
\end{equation}
The second term depends on the variation of the weighted measure. Recall the variational formula $\partial_t(dV)=\frac{1}{2}\tr(\dot{g})dV$, 
which follows from Jacobi's formula: $\frac{d}{dt}\det(A)=\det(A)\tr(A^{-1}\frac{dA}{dt})$ for any invertible square matrix $A$. Hence 
\begin{equation}
    \partial_t(e^{-f}dV)
        =\left( \frac{1}{2}\tr(\dot{g})-\dot{f}\right)e^{-f}dV .
\end{equation}
The variation of $|\nabla \psi|^2$ was computed in Proposition \ref{prop: variation of spinorial gradient squared}:
\begin{equation}
    \partial_t|\nabla \psi|^2
    =-\langle \dot{g},\langle \nabla \psi\otimes \nabla \psi\rangle \rangle 
        +\frac{1}{2}\langle \nabla \dot{g},T_{\psi}\rangle
        +2\Rea\langle \nabla \dot{\psi},\nabla \psi\rangle.
\end{equation}
The proposition now follows from the weighted divergence theorem; see Appendix \ref{sec: weighted integration by parts}.
\end{proof}

The derivatives in the variational formula to follow are arranged for ease of reference in the proofs of the monotonicity formulas in Section \ref{sec: Monotonicity}.

\begin{proposition}\label{prop: variation of Scal_f part}
The following variational formula holds:
\begin{align}
    \frac{d}{dt}\int_M\Scal_f|\psi|^2e^{-f}dV
        &=\int_M\left(-\langle \dot{g},\Ric_f|\psi|^2\rangle
        +\Div_f^2(\dot{g})|\psi|^2
        +2\Rea \langle \dot{\psi},\Scal_f\psi\rangle
        \right. \nonumber \\ &\qquad \quad \quad \left.
        +4\left(\frac{1}{2}\tr(\dot{g})-\dot{f}\right)(\Rea\langle D_f^2\psi,\psi\rangle-|\nabla\psi|^2)
        \right)e^{-f}dV
        \\ &\qquad \quad \quad 
        +2\int_{\partial M}\left(
        \left(\frac{1}{2}\tr(\dot{g})-\dot{f}\right)\nabla_{\nu}|\psi|^2
        -|\psi|^2\nabla_{\nu}\left(\frac{1}{2}\tr(\dot{g})-\dot{f}\right)
        \right)e^{-f}dA. \nonumber
\end{align}
\end{proposition}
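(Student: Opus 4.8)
The plan is to expand $\Scal_f\,|\psi|^2\,e^{-f}dV$ as a product of three pieces — Perelman's weighted scalar curvature $\Scal_f=\Scal+2\Delta f-|\nabla f|^2$, the function $|\psi|^2$, and the weighted volume form $e^{-f}dV$ — and to differentiate each in turn, within the generalized-cylinder framework of Section~\ref{sec: Spin geometry of generalized cylinders}. Two of the three variations are already in hand: $\partial_t(e^{-f}dV)=\theta\,e^{-f}dV$ with $\theta:=\tfrac12\tr(\dot g)-\dot f$, as recalled in the proof of Proposition~\ref{prop: variation of weighted Dirichlet energy}, and, since the cylindrical spin connection is compatible with the spin metric, $\partial_t|\psi|^2=\partial_t\langle\psi,\psi\rangle=2\Rea\langle\overline{\nabla}_t\psi,\psi\rangle=2\Rea\langle\dot\psi,\psi\rangle$. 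The computation therefore reduces to finding $\partial_t\Scal_f$, which I would assemble from the classical first-variation formula $\partial_t\Scal=\Div\Div(\dot g)-\Delta\tr(\dot g)-\langle\dot g,\Ric\rangle$ together with the variations of $\Delta f$ and of $|\nabla f|^2$, both of which follow from varying the Christoffel symbols and the inverse metric.

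The crux is the algebraic identity
\begin{equation*}
    \partial_t\Scal_f=-\langle\dot g,\Ric_f\rangle+\Div_f^2(\dot g)-2\Delta_f\theta,
\end{equation*}
where $\Div_f^2(\dot g)=\Div_f(\Div_f(\dot g))$ is the iterated weighted divergence and $\Delta_f$ the weighted Laplacian on functions. To obtain it one collects the terms of $\partial_t\Scal+2\,\partial_t(\Delta f)-\partial_t|\nabla f|^2$ and checks that they reorganize on the nose: one of the two copies of $\Hess_f$ produced by $2\,\partial_t(\Delta f)$ merges with $-\langle\dot g,\Ric\rangle$ into $-\langle\dot g,\Ric_f\rangle$, while the remaining $f$-dependent terms, together with $\Div\Div(\dot g)$, recombine precisely into $\Div_f^2(\dot g)$ and $-2\Delta_f\theta$. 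This bookkeeping — in particular pinning down the sign conventions in the variations of $\Delta f$ and $|\nabla f|^2$ — is where essentially all the work lies.

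Granting the identity, I would expand $\partial_t(\Scal_f|\psi|^2 e^{-f}dV)$ and integrate over $M$. Three contributions appear at once against $e^{-f}dV$: $-\langle\dot g,\Ric_f\rangle|\psi|^2$ and $\Div_f^2(\dot g)|\psi|^2$ from $\partial_t\Scal_f$, and $2\Rea\langle\dot\psi,\Scal_f\psi\rangle$ from $\partial_t|\psi|^2$; the remainder is $\int_M\big(\theta\,\Scal_f|\psi|^2-2(\Delta_f\theta)|\psi|^2\big)e^{-f}dV$. To the second summand I would apply the weighted Green's identity of Appendix~\ref{sec: weighted integration by parts}, $\int_M\big((\Delta_f\theta)u-\theta\,\Delta_f u\big)e^{-f}dV=\int_{\partial M}\big(u\,\nabla_\nu\theta-\theta\,\nabla_\nu u\big)e^{-f}dA$ with $u=|\psi|^2$; this produces precisely the asserted boundary integral $2\int_{\partial M}\big(\theta\,\nabla_\nu|\psi|^2-|\psi|^2\,\nabla_\nu\theta\big)e^{-f}dA$ and leaves the interior term $\int_M\theta\big(\Scal_f|\psi|^2-2\Delta_f|\psi|^2\big)e^{-f}dV$. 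Finally, the weighted Lichnerowicz formula~(\ref{eqn: weighted Lichnerowicz}), $D_f^2=-\Delta_f+\tfrac14\Scal_f$, together with the pointwise identity $\Rea\langle\Delta_f\psi,\psi\rangle=\tfrac12\Delta_f|\psi|^2-|\nabla\psi|^2$ (which follows from the Leibniz rule for $\Delta|\psi|^2$), gives $\Scal_f|\psi|^2-2\Delta_f|\psi|^2=4\Rea\langle D_f^2\psi,\psi\rangle-4|\nabla\psi|^2$, turning the remaining interior term into $4\theta\big(\Rea\langle D_f^2\psi,\psi\rangle-|\nabla\psi|^2\big)$ and closing the formula. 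I expect no conceptual obstacle beyond the first display: once $\partial_t\Scal_f$ has been reorganized correctly, what remains is a chain of weighted integrations by parts already supplied by the appendix.
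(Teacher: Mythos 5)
Your proposal is correct and follows essentially the same route as the paper's own proof: the same reorganization $\partial_t\Scal_f=\Div_f^2(\dot g)-2\Delta_f\bigl(\tfrac12\tr(\dot g)-\dot f\bigr)-\langle\dot g,\Ric_f\rangle$, the same weighted Green's identity producing the boundary terms, and the same weighted Lichnerowicz/Bochner step converting $\Scal_f|\psi|^2-2\Delta_f|\psi|^2$ into $4\bigl(\Rea\langle D_f^2\psi,\psi\rangle-|\nabla\psi|^2\bigr)$. The bookkeeping you defer (each $-\langle\dot g,\Hess_f\rangle$ landing in $\Ric_f$ and in $\Div_f^2(\dot g)$ respectively) does check out.
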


\begin{proof}
Recall the variational formulas
\begin{align}
    \partial_t\Scal 
        &=\Div^2(\dot{g})-\Delta\tr(\dot{g})-\langle \dot{g},\Ric\rangle \\
    \partial_t(dV)
        &=\frac{1}{2}\tr(\dot{g})dV \\
    \partial_t\Delta f
        &=\Delta \dot{f}-\left\langle \Div(\dot{g})-\frac{1}{2}\nabla\tr(\dot{g}),\nabla f\right\rangle-\langle \dot{g},\Hess_f\rangle \\
    \partial_t|\nabla f|^2
        &=2\langle \nabla \dot{f},\nabla f\rangle-\langle \dot{g},\nabla f\otimes \nabla f\rangle.
\end{align}
Rewriting $\Div$ in terms of $\Div_f$ and combining the above equations implies
\begin{align}\label{eqn: variation of Scal_f}
    \partial_t\Scal_f 
        &=\Div_f^2(\dot{g})-2\Delta_f\left(\frac{1}{2}\tr(\dot{g})-\dot{f}\right)-\langle \dot{g},\Ric_f\rangle \\
    \partial_t|\psi|^2
        &=2\Rea \langle \dot{\psi},\psi\rangle \\
    \partial_t(e^{-f}dV)
        &=\left(\frac{1}{2}\tr(\dot{g})-\dot{f}\right)e^{-f}dV.
\end{align}
Combined, these imply
\begin{align}
    \partial_t(\Scal_f|\psi|^2e^{-f}dV)
        &=\left(\Div_f^2(\dot{g})-2\Delta_f\left(\frac{1}{2}\tr(\dot{g})-\dot{f}\right)-\langle \dot{g},\Ric_f\rangle\right)|\psi|^2\,e^{-f}dV \\
        &\qquad \qquad \qquad 
        +\left(2\Rea \langle \dot{\psi},\Scal_f\psi\rangle +\left(\frac{1}{2}\tr(\dot{g})-\dot{f}\right)\Scal_f|\psi|^2\right)e^{-f}dV. \nonumber
\end{align}
Integration by parts implies
\begin{align}
    \frac{d}{dt}\int_M\Scal_f|\psi|^2e^{-f}dV
        &=\int_M\left(-\langle \dot{g},\Ric_f|\psi|^2\rangle
        +\Div_f^2(\dot{g})|\psi|^2
        +2\Rea \langle \dot{\psi},\Scal_f\psi\rangle
        \right. \nonumber\\ &\qquad \quad \quad \left.
        +\left(\frac{1}{2}\tr(\dot{g})-\dot{f}\right)(\Scal_f|\psi|^2-2\Delta_f|\psi|^2)
        \right)e^{-f}dV 
        \nonumber \\ &\qquad \quad \quad 
        +2\int_{\partial M}\left(
        \left(\frac{1}{2}\tr(\dot{g})-\dot{f}\right)\nabla_{\nu}|\psi|^2
        -|\psi|^2\nabla_{\nu}\left(\frac{1}{2}\tr(\dot{g})-\dot{f}\right)
        \right)e^{-f}dA 
        \nonumber \\
        &=\int_M\left(-\langle \dot{g},\Ric_f|\psi|^2\rangle
        +\Div_f^2(\dot{g})|\psi|^2
        +2\Rea \langle \dot{\psi},\Scal_f\psi\rangle
        \right. \\ &\qquad \quad \quad \left.
        +4\left(\frac{1}{2}\tr(\dot{g})-\dot{f}\right)(\Rea\langle D_f^2\psi,\psi\rangle-|\nabla\psi|^2)
        \right)e^{-f}dV
        \nonumber\\ &\qquad \quad \quad 
        +2\int_{\partial M}\left(
        \left(\frac{1}{2}\tr(\dot{g})-\dot{f}\right)\nabla_{\nu}|\psi|^2
        -|\psi|^2\nabla_{\nu}\left(\frac{1}{2}\tr(\dot{g})-\dot{f}\right)
        \right)e^{-f}dA. 
        \nonumber
\end{align}
where the last equality has used the weighted Bochner formula
\begin{equation}
    \Delta_f|\psi|^2=-2\Rea\langle D_f^2\psi,\psi\rangle +\frac{1}{2}\Scal_f|\psi|^2+2|\nabla \psi|^2,
\end{equation}
which follows easily from the weighted Lichnerowicz formula (\ref{eqn: weighted Lichnerowicz}).
\end{proof}

\begin{corollary}
The following variational formula holds:
\begin{align}\label{eqn: derivative of weighted Dirichlet plus Scal_f term, general case}
    \frac{d}{dt}\int_M&\left( 4|\nabla \psi|^2+\Scal_f|\psi|^2\right)e^{-f}dV
        \\ 
        &=\int_M\biggl(
        \left(\frac{1}{2}\tr(\dot{g})-\dot{f}\right)4\Rea\langle D_f^2\psi,\psi\rangle
        +8\Rea \langle \dot{\psi},D^2_f\psi\rangle 
        \nonumber \\ &\qquad \qquad\quad 
        -\langle \dot{g},2\Div_f(T_{\psi})+4\langle \nabla \psi\otimes \nabla \psi\rangle +\Ric_f|\psi|^2\rangle 
        +\Div_f^2(\dot{g})|\psi|^2
        \biggr) e^{-f}dV
        \nonumber\\&\qquad\qquad
        +\int_{\partial M}\biggl(
        2\langle \dot{g},T_{\psi}(\nu,\cdot)\rangle 
        +8\Rea\langle \dot{\psi},\nabla_{\nu}\psi\rangle
        +2\left(\frac{1}{2}\tr(\dot{g})-\dot{f}\right)\nabla_{\nu}|\psi|^2
        \nonumber \\ &\qquad \qquad\qquad \qquad\quad
        -2|\psi|^2\nabla_{\nu}\left(\frac{1}{2}\tr(\dot{g})-\dot{f}\right)
        \biggr)e^{-f}dA.
        \nonumber
\end{align}
In particular, if $D_f\psi=0$, then
\begin{align}\label{eqn: derivative of weighted Dirichlet plus Scal_f term}
    \frac{d}{dt}\int_M&\left( 4|\nabla \psi|^2+\Scal_f|\psi|^2\right)e^{-f}dV
        \nonumber\\ 
        &=\int_{\partial M}\biggl(
        2\langle \dot{g},T_{\psi}(\nu,\cdot)\rangle 
        +8\Rea\langle \dot{\psi},\nabla_{\nu}\psi\rangle
        +2\left(\frac{1}{2}\tr(\dot{g})-\dot{f}\right)\nabla_{\nu}|\psi|^2
        \\&\qquad\qquad
        -\langle \dot{g}(\nu,\cdot),\nabla |\psi|^2\rangle
        +|\psi|^2\biggl\langle\Div_f(\dot{g})-2\nabla\left(\frac{1}{2}\tr(\dot{g})-\dot{f}\right),\nu\biggr\rangle
        \biggr)e^{-f}dA.
        \nonumber
\end{align}
\end{corollary}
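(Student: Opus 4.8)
The statement comprises the general variational identity (\ref{eqn: derivative of weighted Dirichlet plus Scal_f term, general case}) together with its specialization (\ref{eqn: derivative of weighted Dirichlet plus Scal_f term}) to the case $D_f\psi=0$. The plan for the first part is purely algebraic: I would add $4$ times the variation formula of Proposition~\ref{prop: variation of weighted Dirichlet energy} to the variation formula of Proposition~\ref{prop: variation of Scal_f part}. In the sum, the interior terms $4\big(\tfrac12\tr(\dot g)-\dot f\big)|\nabla\psi|^2$ coming from the Dirichlet energy cancel against the $-4\big(\tfrac12\tr(\dot g)-\dot f\big)|\nabla\psi|^2$ sitting inside $4\big(\tfrac12\tr(\dot g)-\dot f\big)(\Rea\langle D_f^2\psi,\psi\rangle-|\nabla\psi|^2)$, leaving $4\big(\tfrac12\tr(\dot g)-\dot f\big)\Rea\langle D_f^2\psi,\psi\rangle$. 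The two spinor-variation terms $-8\Rea\langle\dot\psi,\Delta_f\psi\rangle$ and $2\Rea\langle\dot\psi,\Scal_f\psi\rangle$ recombine, via the weighted Lichnerowicz formula (\ref{eqn: weighted Lichnerowicz}) in the form $D_f^2=-\Delta_f+\tfrac14\Scal_f$, into $8\Rea\langle\dot\psi,D_f^2\psi\rangle$. Collecting the $\dot g$-linear interior terms from both propositions and concatenating their boundary integrals produces (\ref{eqn: derivative of weighted Dirichlet plus Scal_f term, general case}).

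For the specialization I would use two facts. First, $D_f\psi=0$ forces $D_f^2\psi=0$, which kills both interior terms involving $D_f^2\psi$. Second, $D_f\psi=0$ forces $V_{\psi,f}=\Rea\langle\psi,e_i\cdot D_f\psi\rangle e_i=0$, so $\Ld_{V_{\psi,f}}g=0$, and the term $\Rea\langle D_f\psi,X\cdot\nabla_Y\psi+Y\cdot\nabla_X\psi\rangle$ in Lemma~\ref{lem: div_f(T) in terms of Ric} vanishes as well; that lemma then collapses to
\[
    2\Div_f(T_\psi)+4\langle\nabla\psi\otimes\nabla\psi\rangle+\Ric_f|\psi|^2=\Hess_{|\psi|^2},
\]
which is precisely the combination appearing under $\dot g$ in (\ref{eqn: derivative of weighted Dirichlet plus Scal_f term, general case}). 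After these cancellations the only interior integrand left is $\Div_f^2(\dot g)|\psi|^2-\langle\dot g,\Hess_{|\psi|^2}\rangle$.

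The decisive step is to recognize this leftover interior integrand as a weighted divergence, hence removable by the divergence theorem. Writing $u=|\psi|^2$, the weighted Leibniz rule $\Div_f(u\,\Div_f\dot g)=u\,\Div_f^2(\dot g)+\langle\nabla u,\Div_f\dot g\rangle$ and the contraction identity $\Div_f\big(\dot g(\nabla u,\cdot)\big)=\langle\Div_f\dot g,\nabla u\rangle+\langle\dot g,\Hess_u\rangle$ combine to give
\[
    \Div_f^2(\dot g)\,u-\langle\dot g,\Hess_u\rangle=\Div_f\big(u\,\Div_f(\dot g)-\dot g(\nabla u,\cdot)\big).
\]
Applying the weighted divergence theorem of Appendix~\ref{sec: weighted integration by parts} converts the interior integral into $\int_{\partial M}\big(|\psi|^2\langle\Div_f(\dot g),\nu\rangle-\langle\dot g(\nu,\cdot),\nabla|\psi|^2\rangle\big)e^{-f}dA$. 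Adding this to the boundary integral already present in (\ref{eqn: derivative of weighted Dirichlet plus Scal_f term, general case}) and grouping $-2|\psi|^2\nabla_\nu\big(\tfrac12\tr(\dot g)-\dot f\big)$ with $|\psi|^2\langle\Div_f(\dot g),\nu\rangle$ into $|\psi|^2\big\langle\Div_f(\dot g)-2\nabla(\tfrac12\tr(\dot g)-\dot f),\nu\big\rangle$ yields (\ref{eqn: derivative of weighted Dirichlet plus Scal_f term}) exactly.

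The conceptual content is minimal: the Lichnerowicz recombination of the $\dot\psi$ terms, the reduction of Lemma~\ref{lem: div_f(T) in terms of Ric} once $V_{\psi,f}=0$, and the observation that $\Div_f^2(\dot g)\,u-\langle\dot g,\Hess_u\rangle$ is a weighted divergence. I expect the main obstacle to be bookkeeping rather than ideas: correctly carrying the $\nabla f$-corrections implicit in $\Div_f$ and $\Div_f^2$ through the Leibniz and contraction identities, and matching every boundary term sign-for-sign, is the place where a stray factor or sign is easiest to lose.
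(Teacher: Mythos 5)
Your proposal is correct and follows essentially the same route as the paper: add $4\times$ Proposition \ref{prop: variation of weighted Dirichlet energy} to Proposition \ref{prop: variation of Scal_f part} (recombining the $\dot\psi$-terms via the weighted Lichnerowicz formula), then for $D_f\psi=0$ use Lemma \ref{lem: div_f(T) in terms of Ric} to reduce the $\dot g$-coefficient to $\Hess_{|\psi|^2}$ and cancel it against $\Div_f^2(\dot g)|\psi|^2$ by the weighted divergence theorem applied twice. Your pointwise identity $\Div_f^2(\dot g)\,u-\langle\dot g,\Hess_u\rangle=\Div_f\bigl(u\,\Div_f(\dot g)-\dot g(\nabla u,\cdot)\bigr)$ is just an equivalent packaging of the paper's double integration by parts, and all signs and boundary terms check out.
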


\begin{proof}
The first part of the corollary is immediate from the combination of Propositions \ref{prop: variation of weighted Dirichlet energy} and \ref{prop: variation of Scal_f part}. To prove the second part, note that if $D_f\psi=0$, then Lemma \ref{lem: div_f(T) in terms of Ric} implies that 
\begin{equation}
    2\Div_f(T_{\psi})+4\langle \nabla \psi\otimes \nabla \psi\rangle+\Ric_f|\psi|^2=\Hess_{|\psi|^2}.
\end{equation}
Furthermore, the weighted divergence theorem (see Appendix \ref{sec: weighted integration by parts}), applied twice, implies
\begin{align}
    \int_M\Div_f^2(\dot{g})|\psi|^2\,e^{-f}dV
    &=\int_M\langle \dot{g},\Hess_{|\psi|^2}\rangle \,e^{-f}dV
    \\&\qquad \qquad  
    +\int_{\partial M}\biggl(\langle \Div_f(\dot{g}),\nu\rangle |\psi|^2
    -\langle \dot{g}(\nu,\cdot),\nabla |\psi|^2\rangle 
    \biggr)e^{-f}dA.
    \nonumber
\end{align}
The first part of the corollary combined with the latter two formulas implies the second part of the corollary.
In particular, the $\Hess_{|\psi|^2}$ terms coming from $\Div_f(T_{\psi})$ (the variation of $|\nabla\psi|^2$) and from $\Div_f^2(\dot{g})$ (the variation of $\Scal_f$) cancel.
\end{proof}

For later use, the formula for the variation of the Dirac operator is recorded below. A derivation of this formula can be found in \cite[Thm.\ 5.1]{BGM}, for example.

\begin{lemma}
[Variation of Dirac operator]
\label{lem: variation of Dirac operator}
The Dirac operator evolves by
\begin{align}
    \nablaCyl_{\partial_t} D\psi
    &=D\dot{\psi} -\frac{1}{2}\dot{g}(e_i)\cdot \nabla_i\psi
    +\frac{1}{4}\left(\nabla \tr(\dot{g})-\Div(\dot{g})\right)\cdot \psi.
\end{align}
\end{lemma}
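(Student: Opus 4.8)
A derivation of this formula is given in \cite[Thm.~5.1]{BGM}; here is the plan for a self-contained proof within the generalized cylinder. The idea is to differentiate $D\psi$ directly, using only the comparison of connections and Clifford multiplications from Section~\ref{sec: Spin geometry of generalized cylinders} together with the Gauss--Codazzi--Riccati equations of the embedding $M_t\subset\Cyl$. First I would fix $p\in M$, $t_0\in I$, and choose an adapted frame: since $\partial_t$ is a unit geodesic field normal to the slices and $\bar g$ restricts to $g_t$ on $M_t$, $\overline{\nabla}_{\partial_t}$-parallel transport along the integral curves of $\partial_t$ sends a $g_{t_0}$-orthonormal frame of $T_pM_{t_0}$ to a $g_t$-orthonormal frame of $T_pM_t$; choose $(e_1,\dots,e_n)$ this way, so that $\overline{\nabla}_{\partial_t}e_i=0$ everywhere and, in addition, $\nabla e_i=0$ at the single point $(t_0,p)$. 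Then $[\partial_t,e_i]=-\overline{\nabla}_{e_i}\partial_t=-\tfrac12\dot g(e_i,e_j)e_j$ by (\ref{eqn: Weingarten equals -1/2 dot(g)}). Substituting $\nabla_{e_i}\phi=\overline{\nabla}_{e_i}\phi-\tfrac14\dot g(e_i,e_j)e_j\cdot\phi$ from (\ref{eqn: relation between spin connections}) into $D\phi=e_i\cdot\nabla_{e_i}\phi$ and using the elementary Clifford identity $\sum_{i,j}S_{ij}\,e_i\cdot e_j=-\tr S$ for symmetric $S$, I obtain the pointwise formula
\begin{equation}\label{eqn: D in cylinder terms}
    D\phi=e_i\cdot\overline{\nabla}_{e_i}\phi+\tfrac14\tr(\dot g)\,\phi,
\end{equation}
valid for every spinor $\phi$ at every time.

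Next I would differentiate (\ref{eqn: D in cylinder terms}) with $\phi=\psi$ in the direction $\partial_t$. Since Clifford multiplication on $\Cyl$ is $\overline{\nabla}$-parallel — hence so is ``$\cdot$'', as $X\cdot=\partial_t\bullet X\bullet$ by (\ref{eqn: relation between Clifford multplications}) — and $\overline{\nabla}_{\partial_t}\partial_t=\overline{\nabla}_{\partial_t}e_i=0$,
\begin{equation*}
    \overline{\nabla}_{\partial_t}(D\psi)=e_i\cdot\overline{\nabla}_{\partial_t}\overline{\nabla}_{e_i}\psi+\tfrac14\partial_t(\tr\dot g)\,\psi+\tfrac14\tr(\dot g)\,\dot\psi.
\end{equation*}
By the Ricci identity for $\overline{\nabla}$, $\overline{\nabla}_{\partial_t}\overline{\nabla}_{e_i}\psi=\overline{\nabla}_{e_i}\dot\psi+\overline{R}(\partial_t,e_i)\psi+\overline{\nabla}_{[\partial_t,e_i]}\psi$, where $\overline{R}$ is the curvature of the spin connection of $\Cyl$. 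Here $e_i\cdot\overline{\nabla}_{e_i}\dot\psi=D\dot\psi-\tfrac14\tr(\dot g)\dot\psi$ again by (\ref{eqn: D in cylinder terms}), and re-expanding $\overline{\nabla}_{e_j}\psi$ via (\ref{eqn: relation between spin connections}) and using the symmetric-tensor Clifford identity once more (now for $\dot g$ and for $\dot g^2$),
\begin{equation*}
    e_i\cdot\overline{\nabla}_{[\partial_t,e_i]}\psi=-\tfrac12\dot g(e_i)\cdot\nabla_i\psi+\tfrac18|\dot g|^2\psi.
\end{equation*}

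The remaining term $e_i\cdot\overline{R}(\partial_t,e_i)\psi$ I would evaluate with the standard Clifford contraction of the spinor curvature (the identity underlying the Lichnerowicz formula): for the full orthonormal frame $E_0=\partial_t$, $E_i=e_i$ of $\Cyl$ one has $\sum_{a=0}^n E_a\bullet\overline{R}(E_a,\partial_t)\psi=\tfrac12\overline{\Ric}(\partial_t)\bullet\psi$, and the $a=0$ summand vanishes. Since $e_i\cdot=\partial_t\bullet e_i\bullet$ and $\overline{R}(\partial_t,e_i)=-\overline{R}(e_i,\partial_t)$, this gives $e_i\cdot\overline{R}(\partial_t,e_i)\psi=-\tfrac12\partial_t\bullet\overline{\Ric}(\partial_t)\bullet\psi$; splitting $\overline{\Ric}(\partial_t)=\overline{\Ric}(\partial_t,\partial_t)\partial_t+\overline{\Ric}(\partial_t,e_j)e_j$ and using $\partial_t\bullet\partial_t\bullet=-1$ and $\partial_t\bullet e_j\bullet=e_j\cdot$,
\begin{equation*}
    e_i\cdot\overline{R}(\partial_t,e_i)\psi=\tfrac12\overline{\Ric}(\partial_t,\partial_t)\,\psi-\tfrac12\overline{\Ric}(\partial_t,e_j)\,e_j\cdot\psi.
\end{equation*}

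Finally I would collect the three contributions. The two $\pm\tfrac14\tr(\dot g)\dot\psi$ terms cancel, and the coefficient of $\psi$ equals $\tfrac12\overline{\Ric}(\partial_t,\partial_t)+\tfrac18|\dot g|^2+\tfrac14\partial_t(\tr\dot g)$. Here the Codazzi equation for $M_t\subset\Cyl$, whose Weingarten tensor satisfies $\langle W(X),Y\rangle=-\tfrac12\dot g(X,Y)$, gives $\overline{\Ric}(\partial_t,e_j)=\tfrac12(\Div\dot g-\nabla\tr\dot g)(e_j)$ — so the tangential term above becomes $\tfrac14(\nabla\tr\dot g-\Div\dot g)\cdot\psi$ — while the trace of the Riccati (radial) curvature equation gives $\overline{\Ric}(\partial_t,\partial_t)=-\tfrac12\partial_t(\tr\dot g)-\tfrac14|\dot g|^2$; hence the coefficient of $\psi$ vanishes and what survives is exactly $D\dot\psi-\tfrac12\dot g(e_i)\cdot\nabla_i\psi+\tfrac14(\nabla\tr\dot g-\Div\dot g)\cdot\psi$, as claimed. (Explicit forms of these cylinder curvature components are in \cite[\S\S 3--5]{BGM}.) The one genuinely delicate point is this last, zeroth-order cancellation: it forces one to compute the normal--normal Ricci component of $\Cyl$, which a priori carries a $\ddot g$ contribution, and to see that it is exactly compensated by $\partial_t(\tr\dot g)$ — the only place where the Riccati equation of the generalized cylinder is needed, and where quoting the curvature formulas of \cite{BGM} is most convenient; everything else is bookkeeping with the comparison formulas of Section~\ref{sec: Spin geometry of generalized cylinders} and the symmetric-tensor Clifford identity.
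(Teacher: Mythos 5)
Your proposal is correct, and there is nothing to compare it against inside the paper: the paper does not prove this lemma but simply points to \cite[Thm.\ 5.1]{BGM}, so what you give is a genuine (and welcome) self-contained reconstruction within the generalized-cylinder framework of Section \ref{sec: Spin geometry of generalized cylinders}. I checked the main steps: the identity $D\phi=e_i\cdot\overline{\nabla}_{e_i}\phi+\tfrac14\tr(\dot g)\phi$ follows from (\ref{eqn: relation between spin connections}) together with $\sum_{i,j}S_{ij}e_i\cdot e_j=-\tr S$; with your $\overline{\nabla}_{\partial_t}$-parallel frame one indeed has $[\partial_t,e_i]=-\tfrac12\dot g(e_i,e_j)e_j$ by (\ref{eqn: Weingarten equals -1/2 dot(g)}), and the commutator term produces $-\tfrac12\dot g(e_i)\cdot\nabla_i\psi+\tfrac18|\dot g|^2\psi$ as you state; your Clifford contraction $\sum_aE_a\bullet\overline{R}(E_a,X)\psi=\tfrac12\overline{\Ric}(X)\bullet\psi$ is the convention consistent with the paper's Ricci identity (\ref{eqn: weighted Ricci identity}) at $f=0$; and with $\overline{\Ric}(\partial_t,\partial_t)=-\tfrac12\partial_t(\tr\dot g)-\tfrac14|\dot g|^2$ from the traced Riccati equation the zeroth-order coefficient $\tfrac12\overline{\Ric}(\partial_t,\partial_t)+\tfrac18|\dot g|^2+\tfrac14\partial_t(\tr\dot g)$ does vanish. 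The only caution for a full write-up is sign discipline: the traced Codazzi identity $\overline{\Ric}(\partial_t,e_j)=\tfrac12(\Div\dot g-\nabla\tr\dot g)(e_j)$ and the curvature contraction must be taken in the \emph{same} curvature convention, since flipping either one alone would reverse the sign of the first-order Clifford term $\tfrac14(\nabla\tr\dot g-\Div\dot g)\cdot\psi$; quoting the explicit cylinder curvature components from \cite[\S 3 -- \S 5]{BGM}, as you propose, is the cleanest way to pin this down. What your route buys over the bare citation is that it exposes why no $\ddot g$ term survives in the variation — the Riccati cancellation against $\partial_t(\tr\dot g)$ — which is invisible if one only quotes the final formula.
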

\section{The energy functional}\label{sec: Monotonicity}

This section applies the variational formulas derived in Section \ref{sec: Variational formulas} to the special case of asymptotically Euclidean manifolds to prove existence and uniqueness of critical points of the energy (Theorem \ref{thm: existence and uniqueness of critical points, intro}), and the monotonicity theorem (Theorem \ref{thm: monotonicity intro}).

A Riemannian spin manifold $(M^n,g)$ is called asymptotically Euclidean (AE) of order $\tau$ if there exists a compact subset $K\subset M$, a radius $\rho>0$, and a diffeomorphism $\Phi:M\setminus K\to \R^n\setminus B_{\rho}(0)$, with respect to which, for all $k\in \N$,
\begin{equation}
    g_{ij}=\delta_{ij}+O(r^{-\tau}), \qquad \partial^kg_{ij}=O(r^{-\tau-k}),
\end{equation}
for any partial derivative of order $k$ as $r\to \infty$, where $r=|\Phi|$ is the Euclidean distance function. 
The set $M\setminus K$ is called the end of $M$. (The results of this section extend in a straightforward manner to AE manifolds with multiple ends, though they are not pursued here.)
The AE structure $\Phi$ defines a trivialization of the spin bundle at infinity. A spinor $\psi$ defined on the end of $M$ is called \emph{constant} (with respect to the asymptotic coordinates) if $\psi=(\Phi^{-1})^*\psi_0$, for some constant spinor $\psi_0$ on $\R^n$.
In what follows, denote by $S_{\rho}=r^{-1}(\rho)\subset M$ the coordinate sphere of radius $\rho$.

The appropriate analytic tools for studying AE manifolds are the \emph{weighted H\"older spaces} $C^{k,\alpha}_{\beta}(M)$, whose precise definitions are stated in Appendix \ref{sec: appendix}.
These spaces share many of the global elliptic regularity results which hold for the usual H\"older spaces on compact manifolds.
The index $\beta$ is important because it denotes the {\it order of growth}: functions in $C^{k,\alpha}_{\beta}(M)$ grow at most like $r^{\beta}$.
In particular, if the metric $g$ is AE of order $\tau$ on $M=\R^n$, then in the AE coordinate system, $g-\delta$ lies in $C^{k,\alpha}_{-\tau}(M)$ for all $k\in \N$ and the scalar curvature of $g$ lies in $C^{k,\alpha}_{-\tau-2}(M)$ for all $k\in \N$.

\subsection{Critical points.}\label{sec: critical points}
Let $(M^n,g)$ be a spin, AE manifold of order $\tau>\frac{n-2}{2}$. Fix a smooth spinor $\psi_0$ which is constant at infinity with respect to the AE coordinate system and with $|\psi_0|\to 1$ at infinity. Define the energy functional
\begin{align}
    \mathscr{E}_g(\psi,f)=\int_M\Big(4|\nabla \psi|^2+\Scal_f(|\psi|^2-1)\Big)e^{-f}dV_g,
\end{align}
on the space of spinors $\psi$ such that $\psi-\psi_0\in C^{2,\alpha}_{-\tau}(M)$ and the space of functions $f\in C^{2,\alpha}_{-\tau}(M)$. 
(These boundary conditions extend in a straightforward manner to other non-compact geometries.)

The energy generalizes various well-known functionals. 
If the spinor is zero, the energy equals minus Perelman's weighted Hilbert-Einstein action;
if the spinor has unit norm and the weight is zero, the energy is the ``spinorial energy'' introduced for closed manifolds in \cite{AWW};
if the weighted scalar curvature vanishes, then the energy equals the weighted Dirichlet energy of the spinor.
Furthermore, if the weight is zero, spinors minimizing (\ref{eqn: energy functional}) are precisely the Witten spinors, and the value of the energy equals the difference between the ADM mass and the Hilbert-Einstein action, also known as the Regge-Teitelboim Hamiltonian; 
for general $f$, the spinors minimizing (\ref{eqn: energy functional}) are precisely the weighted Witten spinors, and the value of the energy equals the difference between the weighted ADM mass and Perelman's weighted Hilbert-Einstein action \cite{BO,DO,DO2}. 

\begin{proposition}
[Variation of $\mathscr{E}$]
\label{prop: variation of E}
The variation of $\mathscr{E}_g$ in the directions $\dot{\psi},\dot{f}\in C^{2,\alpha}_{-\tau}(M)$ is
\begin{align}\label{eqn: variation of E}
    \frac{d}{dt}\mathscr{E}_g(\psi,f)
        &=\int_M\biggl(\dot{f}\left(\Scal_f-4\Rea\langle D_f^2\psi,\psi\rangle\right)
        +8\Rea \langle \dot{\psi},D^2_f\psi\rangle
        \biggr) e^{-f}dV.
\end{align}
In particular, the pair $(\psi,f)$ is critical for $\mathscr{E}_g$ if and only if 
\begin{equation}
    \Scal_f=0 \qquad \text{and} \qquad D_f\psi=0.
\end{equation}
\end{proposition}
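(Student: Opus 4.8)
The plan is to reduce the computation of $\frac{d}{dt}\mathscr{E}_g(\psi,f)$ entirely to the variational formulas already established, specialized to the case $\dot g=0$ (the background metric of $\mathscr{E}_g$ is fixed, so the spin bundle does not change and $\dot\psi=\partial_t\psi$ is an ordinary derivative), and then to pass to the limit in the boundary integrals over the coordinate spheres $S_\rho$ as $\rho\to\infty$, using the decay rates encoded by the weighted H\"older spaces.

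First I will split $\mathscr{E}_g(\psi,f)=\int_M\bigl(4|\nabla\psi|^2+\Scal_f|\psi|^2\bigr)e^{-f}dV-\int_M\Scal_f\,e^{-f}dV$ and differentiate the two pieces separately. The variation of the first piece is read off from the general-case identity (\ref{eqn: derivative of weighted Dirichlet plus Scal_f term, general case}) by setting $\dot g=0$: every $\dot g$-term drops, leaving the bulk integrand $-4\dot f\,\Rea\langle D_f^2\psi,\psi\rangle+8\Rea\langle\dot\psi,D_f^2\psi\rangle$ and, over $S_\rho$, the boundary integrand $8\Rea\langle\dot\psi,\nabla_\nu\psi\rangle-2\dot f\,\nabla_\nu|\psi|^2+2|\psi|^2\nabla_\nu\dot f$. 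For the second piece, the $\dot g=0$ cases of the variational formulas in the proof of Proposition~\ref{prop: variation of Scal_f part} give $\partial_t\Scal_f=2\Delta_f\dot f$ and $\partial_t(e^{-f}dV)=-\dot f\,e^{-f}dV$, whence $\frac{d}{dt}\int_M\Scal_f e^{-f}dV=\int_M(2\Delta_f\dot f-\dot f\Scal_f)e^{-f}dV$; one application of the weighted divergence theorem (Appendix~\ref{sec: weighted integration by parts}), namely $\int_M(\Delta_f\dot f)\,e^{-f}dV=\int_{S_\rho}\nabla_\nu\dot f\,e^{-f}dA$, rewrites this as $-\int_M\dot f\Scal_f e^{-f}dV+2\int_{S_\rho}\nabla_\nu\dot f\,e^{-f}dA$. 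Subtracting, the bulk terms assemble into exactly the integrand of (\ref{eqn: variation of E}), while the boundary integrand collapses to $8\Rea\langle\dot\psi,\nabla_\nu\psi\rangle-2\dot f\,\nabla_\nu|\psi|^2+2(|\psi|^2-1)\nabla_\nu\dot f$. The point I want to stress is that the ``$-1$'' built into the functional is precisely what converts the otherwise problematic term $2|\psi|^2\nabla_\nu\dot f$ into the harmless $2(|\psi|^2-1)\nabla_\nu\dot f$.

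Next I will check that this remaining boundary integral vanishes as $\rho\to\infty$. Since $\psi-\psi_0,\dot\psi,\dot f\in C^{2,\alpha}_{-\tau}(M)$ and $|\psi_0|\to1$, one has $\dot f,\dot\psi=O(r^{-\tau})$, $\nabla\dot f,\nabla\psi,\nabla|\psi|^2=O(r^{-\tau-1})$, $|\psi|^2-1=O(r^{-\tau})$, while $|S_\rho|=O(\rho^{n-1})$; each of the three integrands is a product of two factors of size $O(r^{-\tau})$ or $O(r^{-\tau-1})$, so the boundary integral is $O(\rho^{\,n-2-2\tau})$, which goes to zero precisely because $\tau>\tfrac{n-2}{2}$. (The interior integrals converge for the same reason, so the limiting procedure is legitimate.) This establishes (\ref{eqn: variation of E}). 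Finally I will read off the critical points: by (\ref{eqn: variation of E}), $(\psi,f)$ is critical iff $\int_M\bigl(\dot f(\Scal_f-4\Rea\langle D_f^2\psi,\psi\rangle)+8\Rea\langle\dot\psi,D_f^2\psi\rangle\bigr)e^{-f}dV=0$ for all $\dot\psi,\dot f\in C^{2,\alpha}_{-\tau}(M)$. The estimates above give $D_f^2\psi=O(r^{-\tau-2})\in L^2(e^{-f}dV)$, so choosing $\dot f=0$ and letting $\dot\psi$ run over a dense family of compactly supported spinors forces $D_f^2\psi=0$, after which $\dot\psi=0$ forces $\Scal_f=4\Rea\langle D_f^2\psi,\psi\rangle=0$; the converse is immediate. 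To promote $D_f^2\psi=0$ to $D_f\psi=0$ I will use the unitary equivalence (\ref{eqn: unitary equivalence of Dirac and weighted Dirac}): $\tilde\psi:=e^{-f/2}\psi$ then satisfies $D^2\tilde\psi=0$ and $\tilde\psi-\psi_0=O(r^{-\tau})$, so $\chi:=D\tilde\psi=O(r^{-\tau-1})$, $\nabla\chi=O(r^{-\tau-2})$ and $D\chi=0$; integrating the Lichnerowicz formula (\ref{eqn: weighted Lichnerowicz}) with $f=0$, i.e.\ $-\Delta\chi+\tfrac14\Scal\,\chi=0$, against $\chi$ over $\{r\le\rho\}$ and using $\Scal\ge0$, the boundary term $\int_{S_\rho}\Rea\langle\nabla_\nu\chi,\chi\rangle\,dA=O(\rho^{\,n-4-2\tau})\to0$, so $\nabla\chi\equiv0$ and hence $\chi\equiv0$ since it decays; thus $D_f\psi=e^{f/2}D\tilde\psi=0$.

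I expect the main obstacle to be the boundary-term bookkeeping at infinity: one must verify that, after the cancellation supplied by the $\int_M\Scal_f e^{-f}dV$ piece, every surviving boundary term is genuinely quadratic in the decaying data (the variations $\dot\psi,\dot f$ together with $\psi-\psi_0$ and $f$), so that the single hypothesis $\tau>\tfrac{n-2}{2}$ suffices; the secondary delicate point is the elliptic upgrade $D_f^2\psi=0\Rightarrow D_f\psi=0$, which is where nonnegative scalar curvature (as in Theorem~\ref{thm: existence and uniqueness of critical points, intro}) enters.
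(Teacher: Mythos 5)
Your argument follows the paper's proof almost line for line: the same decomposition of $\mathscr{E}_g$ into $\int_M(4|\nabla\psi|^2+\Scal_f|\psi|^2)e^{-f}dV$ minus $\int_M\Scal_f\,e^{-f}dV$, the same specialization of (\ref{eqn: derivative of weighted Dirichlet plus Scal_f term, general case}) to $\dot g=0$, the same cancellation producing the boundary integrand $8\Rea\langle\dot\psi,\nabla_\nu\psi\rangle-2\dot f\,\nabla_\nu|\psi|^2+2(|\psi|^2-1)\nabla_\nu\dot f$, and the same decay count $O(\rho^{n-2-2\tau})\to 0$ for $\tau>\frac{n-2}{2}$. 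All of that is correct.

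The one place you deviate is the elliptic upgrade $D_f^2\psi=0\Rightarrow D_f\psi=0$, and there your argument proves slightly less than the proposition claims. You pass to $\tilde\psi=e^{-f/2}\psi$, set $\chi=D\tilde\psi$, and integrate the \emph{unweighted} Lichnerowicz identity, which forces you to invoke $\Scal\ge 0$ to conclude $\nabla\chi=0$ from $\int_M\bigl(|\nabla\chi|^2+\tfrac14\Scal|\chi|^2\bigr)dV=0$. But Proposition \ref{prop: variation of E} carries no nonnegative-scalar-curvature hypothesis (that assumption first enters in Theorem \ref{thm: existence and uniqueness of critical points}), so as written your proof only establishes the critical-point characterization under an extra standing assumption. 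The fix is the one the paper uses: at a critical point you have already derived $\Scal_f=0$, so apply the \emph{weighted} Lichnerowicz formula (\ref{eqn: weighted Lichnerowicz}) directly to $\phi:=D_f\psi\in C^{1,\alpha}_{-\tau-1}$, which satisfies $D_f\phi=0$; then $0=\int_M\langle D_f^2\phi,\phi\rangle e^{-f}dV=\int_M|\nabla\phi|^2e^{-f}dV$ with no curvature sign needed, and $\phi\equiv 0$ follows from its decay. With that substitution your proof is complete and matches the paper's.
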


\begin{proof}
Using (\ref{eqn: derivative of weighted Dirichlet plus Scal_f term, general case}), it remains to compute the variation of $\int_M\Scal_f\,e^{-f}dV$ with respect to $\dot{f}$. This is achieved via (\ref{eqn: variation of Scal_f}): when $\dot{g}=0$, it follows that
\begin{equation}
    \partial_t\left(\Scal_f\,e^{-f}\right)=\left(2\Delta_f\dot{f}-\dot{f}\Scal_f\right)\,e^{-f}.
\end{equation}
Hence, the weighted divergence theorem implies
\begin{align}
    \frac{d}{dt}\int_M\Scal_f\,e^{-f}dV
        &=\int_M\left(2\Delta_f\dot{f}-\dot{f}\Scal_f\right)\,e^{-f}dV
        \\
        &=-\int_M\dot{f}\Scal_f\,e^{-f}dV+\lim_{\rho\to \infty}\int_{S_{\rho}}2\nabla_{\nu}\dot{f} \,e^{-f}dA.
        \nonumber
\end{align}
Combined with the $\dot{g}=0$ version of (\ref{eqn: derivative of weighted Dirichlet plus Scal_f term, general case}), it follows that
\begin{align}
    \frac{d}{dt}\mathscr{E}_g(\psi,f)
        &=\int_M\biggl(\dot{f}\left(\Scal_f-4\Rea\langle D_f^2\psi,\psi\rangle\right)
        +8\Rea \langle \dot{\psi},D^2_f\psi\rangle
        \biggr) e^{-f}dV
        \\&\qquad\qquad
        +\lim_{\rho\to \infty}\int_{S_{\rho}}\biggl(
        8\Rea\langle \dot{\psi},\nabla_{\nu}\psi\rangle
        -2\dot{f}\nabla_{\nu}|\psi|^2
        +2(|\psi|^2-1)\nabla_{\nu}\dot{f}
        \biggr)e^{-f}dA.
        \nonumber
\end{align}
Since $\dot{\psi}$, $\dot{f}$, and $|\psi|^2-1$ all lie in $C^{2,\alpha}_{-\tau}$, the boundary terms vanish when $\tau>\frac{n-2}{2}$.

It follows immediately from (\ref{eqn: variation of E}) and the fundamental lemma of the calculus of variations that the pair $(\psi,f)$ is critical for $\mathscr{E}_g$ if and only if $D_f^2\psi=0$ and $\Scal_f=0$. 
It therefore remains to show that if $D_f^2\psi=0$ and $\Scal_f=0$, then in fact $D_f\psi=0$. 
If $D_f^2\psi=0$, then the spinor $\phi:=D_f\psi$ lies in $C^{1,\alpha}_{-\tau-1}$ and satisfies $D_f\phi=0$.
Applying the weighted Lichnerowicz formula with the assumption $\Scal_f=0$, and integrating by parts (the boundary term vanishes because $\tau > (n-2)/2$), implies
\begin{align*}
    0
    &=\int_M\langle D^2_f\phi,\phi\rangle \,e^{-f} dV_g 
    =-\int_M\langle \dL_f\phi,\phi\rangle\,e^{-f} dV_g 
    =\int_M |\nabla \phi|^2\,e^{-f} dV_g.
\end{align*}
Hence $\nabla \phi=0$, so $|\phi|^2$ is a constant, which must be zero since $\phi$ vanishes at infinity. Thus $D_f\psi=0$.
\end{proof}

\begin{theorem}
[Existence and uniqueness of critical points; Theorem \ref{thm: existence and uniqueness of critical points, intro} restated]
\label{thm: existence and uniqueness of critical points}
If $(M^n,g)$ has nonnegative scalar curvature, there exists a unique critical point $(\psi_g,f_g)$ of $\mathscr{E}_g$ such that $\psi_g-\psi_0$ and $f_g$ lie in $C^{2,\alpha}_{-\tau}(M)$. Moreover, $(\psi_g,f_g)$ is a min-max critical point,
\begin{equation}
    \mathscr{E}_g(\psi_g,f_g)=\max_{f} \min_{\psi}\mathscr{E}_g(\psi,f),
\end{equation}
where the min-max is taken over all $\psi$ such that $\psi-\psi_0\in C^{2,\alpha}_{-\tau}(M)$ and all $f\in C^{2,\alpha}_{-\tau}(M)$.
\end{theorem}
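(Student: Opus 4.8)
The plan is to decouple the two variables of $\mathscr{E}_g$ and handle them in the order dictated by the min-max: first the inner minimization over $\psi$ for fixed $f$, then the outer maximization over $f$. For fixed $f\in C^{2,\alpha}_{-\tau}(M)$, the map $\psi\mapsto\mathscr{E}_g(\psi,f)$ is, up to the additive constant $-\int_M\Scal_f e^{-f}dV$, the functional $\psi\mapsto\int_M(4|\nabla\psi|^2+\Scal_f|\psi|^2)e^{-f}dV=4\int_M\Rea\langle D_f^2\psi,\psi\rangle e^{-f}dV$ by the weighted Lichnerowicz formula (\ref{eqn: weighted Lichnerowicz}) and weighted integration by parts; this is a quadratic functional in $\psi$ with the affine constraint $\psi-\psi_0\in C^{2,\alpha}_{-\tau}$. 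By unitary equivalence (\ref{eqn: unitary equivalence of Dirac and weighted Dirac}), minimizing it is equivalent to finding the unique $f$-weighted Witten spinor, i.e. solving $D_f\psi=0$ with $\psi-\psi_0\in C^{2,\alpha}_{-\tau}$: existence and uniqueness of this solution is the standard Witten/Lichnerowicz argument adapted to the weighted operator, which is invertible on the relevant weighted Hölder spaces because $D_f$ is unitarily conjugate to $D$ and $\tau>\frac{n-2}{2}$; since $\Scal_f\ge 0$ is not yet available at this stage, one instead uses that $D_f^2$ is a nonnegative operator modulo lower-order terms whose kernel is trivial in the decaying spaces, so the quadratic form is coercive and the minimizer is unique. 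Write $\psi_f$ for this minimizer and $\mathcal{F}(f):=\min_\psi\mathscr{E}_g(\psi,f)=\mathscr{E}_g(\psi_f,f)$.

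Next I would analyze $\mathcal{F}$. Because $D_f\psi_f=0$, the first-variation formula (\ref{eqn: variation of E}) evaluated at $(\psi_f,f)$ collapses: the $\dot\psi$-term drops (it is $8\Rea\langle\dot\psi,D_f^2\psi_f\rangle=0$, and one also needs that the derivative of $\psi_f$ in $f$ stays in the admissible space, which is exactly the content of Appendix \ref{sec: Time derivatives of weighted Witten spinors}), and the $\dot f$-term becomes $\int_M\dot f(\Scal_f-4\Rea\langle D_f^2\psi_f,\psi_f\rangle)e^{-f}dV=\int_M\dot f\,\Scal_f\,e^{-f}dV$. Hence $\mathcal{F}'(f)\cdot\dot f=\int_M\dot f\,\Scal_f\,e^{-f}dV$, so critical points of $\mathcal{F}$ are exactly solutions of $\Scal_f=0$, i.e. $\Scal+2\Delta f-|\nabla f|^2=0$ with $f\in C^{2,\alpha}_{-\tau}$. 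Under the substitution $w=e^{-f/2}$ this is the linear equation $(-4\Delta+\Scal)w=0$ — the Yamabe-type / conformal Laplacian equation — with $w-1\in C^{2,\alpha}_{-\tau}$; when $\Scal\ge 0$ this operator is a positive, invertible Schrödinger operator on the decaying weighted Hölder spaces, so there is a unique such $w$, positive by the maximum principle, giving a unique $f_g$. Then $(\psi_g,f_g):=(\psi_{f_g},f_g)$ is the unique critical point of $\mathscr{E}_g$, and it satisfies (\ref{eqn: critical point equations}); uniqueness among all critical points follows since any critical pair must have $D_f\psi=0$ (this is Proposition \ref{prop: variation of E}) hence $\psi=\psi_f$, and then $\Scal_f=0$ forces $f=f_g$.

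For the min-max statement I would show $\mathcal{F}$ is concave in $f$, so that its unique critical point $f_g$ is its global maximum and $\mathscr{E}_g(\psi_g,f_g)=\mathcal{F}(f_g)=\max_f\min_\psi\mathscr{E}_g(\psi,f)$. Concavity of $\mathcal{F}$ is the geometric heart: I would compute the second variation along $f$, which by differentiating $\mathcal{F}'(f)\cdot\dot f=\int_M\dot f\,\Scal_f\,e^{-f}dV$ and using $\partial_t(\Scal_f e^{-f})=(2\Delta_f\dot f-\dot f\Scal_f)e^{-f}$ from (\ref{eqn: variation of Scal_f}) (with $\dot g=0$), yields $\mathcal{F}''(f)(\dot f,\dot f)=\int_M\dot f(2\Delta_f\dot f-\dot f\Scal_f)e^{-f}dV=-\int_M(2|\nabla\dot f|^2+\Scal_f\dot f^2)e^{-f}dV$ after integration by parts, the boundary term vanishing since $\dot f\in C^{2,\alpha}_{-\tau}$ and $\tau>\frac{n-2}{2}$. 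On the curve $w=e^{-f/2}$ the relevant quadratic form is $\int(4|\nabla\dot f|^2 + 2\Scal_f\dot f^2)$-type and is manifestly nonpositive; I expect that the cleanest route is to note $\mathscr E_g(\psi,\cdot)$ is, for each fixed $\psi$, affine-plus-concave in $f$ after the $w$-substitution, so the infimum over $\psi$ of concave-in-$f$ functionals is concave in $f$ — making the min-max identity follow from a Sion-type minimax theorem once one checks the requisite semicontinuity and the existence of the saddle, which we have just established directly. The main obstacle I anticipate is not the algebra but the functional-analytic bookkeeping: ensuring the minimizer $\psi_f$ and its $f$-derivative live in the correct weighted Hölder spaces so that all the integrations by parts are legitimate and the boundary terms at infinity genuinely vanish — this is where the hypotheses $\tau>\frac{n-2}{2}$ and $\Scal\ge 0$ are used, and where Appendix \ref{sec: Time derivatives of weighted Witten spinors} does the real work; the $\Scal\ge0$ hypothesis in particular is what guarantees both that $(-4\Delta+\Scal)$ has trivial kernel (so $f_g$ exists and is unique) and that $\Scal_{f_g}=0$ together with positivity of $w$ keeps the weighted geometry well-behaved.
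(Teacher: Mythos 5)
Your overall architecture matches the paper's: first minimize in $\psi$ for fixed $f$ (the minimizer being the $D_f$-harmonic spinor $\psi_f=e^{f/2}\psi$ obtained from the unweighted Witten spinor via the unitary equivalence (\ref{eqn: unitary equivalence of Dirac and weighted Dirac})), then maximize $\mathcal{F}(f)=\mathscr{E}_g(\psi_f,f)$ over $f$, reducing $\Scal_f=0$ to the linear Schr\"odinger problem $(-4\Delta+\Scal)w=0$, $w-1\in C^{2,\alpha}_{-\tau}$. Two points need repair. The minor one: your justification that $\psi_f$ is a \emph{global} minimizer (``the quadratic form is coercive'') is not established and coercivity is in fact false --- what one needs is that $\mathscr{E}_g(\psi_f+\phi,f)-\mathscr{E}_g(\psi_f,f)=4\int_M(|\nabla\phi|^2+\tfrac14\Scal_f|\phi|^2)e^{-f}dV\geq 0$ for decaying $\phi$, which is not immediate because $\Scal_f$ can be negative for an arbitrary $f\in C^{2,\alpha}_{-\tau}$ even when $\Scal\geq0$. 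The paper proves this by rewriting $\Scal_f=\Scal+2\Delta_f f+|\nabla f|^2$, integrating by parts, and applying Kato's inequality with Peter--Paul to absorb the cross term; you could alternatively get it from the weighted Lichnerowicz formula, which identifies the quadratic form with $4\int_M|D_f\phi|^2e^{-f}dV$. Either way, an argument is needed and none is given.

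The substantive gap is the outer maximization, i.e.\ the ``max'' half of the min-max. Your second variation $\mathcal{F}''(f)(\dot f,\dot f)=-\int_M(2|\nabla\dot f|^2+\Scal_f\dot f^2)e^{-f}dV$ is \emph{not} manifestly nonpositive at a general $f$, for the same reason as above: $\Scal_f$ may change sign. Your assertion that after the $w$-substitution the form is ``manifestly nonpositive'' is stated with the wrong sign and never computed, and the fallback to a Sion-type minimax theorem is circular --- it presupposes exactly the saddle property (that $f_g$ globally maximizes $\mathcal{F}$) that is to be proven; knowing $f_g$ is the unique critical point of $\mathcal{F}$ does not make it a maximum. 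The paper closes this step with an exact global identity at the critical weight: since $\psi_{f_g+h}=e^{h/2}\psi_{f_g}$ and $\Scal_{f_g+h}=2\Delta_{f_g}h-|\nabla h|^2$ (using $\Scal_{f_g}=0$), a direct computation gives $\mathscr{E}_g(\psi_{f_g+h},f_g+h)=\mathscr{E}_g(\psi_{f_g},f_g)-\int_M|\nabla h|^2e^{-f_g-h}dV$, from which the global maximality is immediate. Your concavity route can in fact be salvaged --- expanding $\mathcal{F}(f)$ with $\psi_f=e^{f/2}\psi$ and $w=e^{-f/2}$ yields $\mathcal{F}(f)=\int_M(4|\nabla\psi|^2+\Scal|\psi|^2)\,dV-\int_M(4|\nabla w|^2+\Scal w^2)\,dV$, which is concave in $w$ (not in $f$) when $\Scal\geq0$ --- but that computation is precisely what is missing from your write-up, and without it the max-min identity is unproven.
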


\begin{proof}
The proof proceeds in two steps. Step 1 shows that given any $f\in C^{2,\alpha}_{-\tau}$, there exists a unique a $D_f$-harmonic spinor $\psi_f$ which is asymptotic to $\psi_0$ and that $\psi_f$ globally minimizes $\mathscr{E}_g(\cdot,f)$ over all spinors asymptotic to $\psi_0$. 
Step 2 shows that there exists a unique $f_g\in C^{2,\alpha}_{-\tau}$ which solves $\Scal_{f_g}=0$ and that $f_g$ globally maximizes $\mathscr{E}_g(\psi_{f},f)$ over all $f\in C^{2,\alpha}_{-\tau}$, with $\psi_{f}$ given by Step 1.
The pair $(\psi_{f_g},f_g)$ is then the desired critical point of $\mathscr{E}_g$.
\\\\
\noindent
\emph{Claim 1:} Given any $f\in C^{2,\alpha}_{-\tau}$, there exists a unique $D_f$-harmonic spinor $\psi_f$ with $\psi_f-\psi_0\in C^{2,\alpha}_{-\tau}$, and $\psi_f$ globally minimizes $\mathscr{E}_g(\cdot,f)$ over all spinors asymptotic to $\psi_0$ in $C^{2,\alpha}_{-\tau}$.

\emph{Proof of Claim 1.}
By the proof of Witten's positive mass theorem, there exists a unique smooth spinor $\psi$ on $M$ such that $\psi-\psi_0\in C^{2,\alpha}_{-\tau}$ and $D\psi=0$. 
Choose any $f\in C^{2,\alpha}_{-\tau}$. 
Then by the unitary equivalence (\ref{eqn: unitary equivalence of Dirac and weighted Dirac}), the spinor $\psi_f=e^{f/2}\psi$ solves $D_f\psi_f=0$. 

It remains to show that $\psi_f$ minimizes $\mathscr{E}_g(\cdot,f)$. 
This is achieved by showing that
\begin{equation}\label{eqn: intermediate minimization eqn 0}
    \mathscr{E}_g(\psi_f+\phi,f)\geq \mathscr{E}_g(\psi_f,f),
\end{equation}
for all compactly supported smooth spinors $\phi$; density of $C^{\infty}_c(M)$ in $C^{2,\alpha}_{-\tau}(M)$ then concludes the proof. 
Let $\phi$ be a compactly supported smooth spinor on $M$. Integration by parts (the boundary term vanishes because $\phi$ is compactly supported), the weighted Lichnerowicz formula (\ref{eqn: weighted Lichnerowicz}) and the assumption $D_f\psi_f=0$ imply
\begin{align}
    \mathscr{E}_g(\psi_f+\phi,f)-\mathscr{E}_g(\psi_f,f)
    &=4\int_M\left(|\nabla(\psi_f+\phi)|^2+\frac{1}{4}\Scal_f|\psi_f+\phi|^2\right) e^{-f}dV
    \nonumber\\&\qquad \qquad \qquad
    -4\int_M\left(|\nabla \psi_f|^2+\frac{1}{4}\Scal_f|\psi_f|^2\right)e^{-f}dV
    \nonumber\\
    &=4\int_M\left(|\nabla \phi|^2+2\Rea \langle \nabla \psi_f,\nabla \phi\rangle +\frac{1}{4}\Scal_f|\phi|^2+\frac{1}{2}\Scal_f\Rea \langle \psi_f,\phi\rangle \right)e^{-f}dV
    \nonumber\\
    &=4\int_M\left(|\nabla \phi|^2+\frac{1}{4}\Scal_f|\phi|^2
    +2\Rea \langle D_f^2\psi_f, \phi\rangle  \right)e^{-f}dV
    \nonumber\\
    &=4\int_M\left(|\nabla \phi|^2+\frac{1}{4}\Scal_f|\phi|^2 \right)e^{-f}dV.
    \label{eqn: intermiate minimization eqn}
\end{align}
Below it is shown that the last integral is nonnegative; when $\Scal_f\geq 0$, this immediate. Using the definitions of the weighted Laplacian (\ref{eqn: defn of weighted Laplacian}) and the weighted scalar curvature (\ref{eqn: defn of weighted scalar curvature}), write $\Scal_f$ as
\begin{equation}
    \Scal_f=\Scal+2\Delta_f f+|\nabla f|^2.
\end{equation}
Then integrating (\ref{eqn: intermiate minimization eqn}) by parts (the boundary term vanishes because $\phi$ has compact support) implies
\begin{equation}\label{eqn: intermiate minimization eqn 2}
    \mathscr{E}_g(\psi_f+\phi,f)-\mathscr{E}_g(\psi_f,f)
    =4\int_M\left(|\nabla \phi|^2+\frac{1}{4}\Scal|\phi|^2-\frac{1}{2}\langle \nabla f,\nabla |\phi|^2\rangle +\frac{1}{4}|\nabla f|^2|\phi|^2
    \right)e^{-f}dV.
\end{equation}
The Cauchy-Schwarz inequality combined with Kato's inequality and the Peter-Paul inequality $ab\leq \frac{1}{2\epsilon }a^2+\frac{\epsilon}{2}b^2$ implies, for all $\epsilon>0$,
\begin{equation}
    -\frac{1}{2}\langle \nabla f,\nabla |\phi|^2\rangle 
    \geq -|\nabla f||\phi||\nabla \phi|
    \geq -\frac{1}{2\epsilon}|\nabla f|^2|\phi|^2-\frac{\epsilon}{2}|\nabla \phi|^2.
\end{equation}
Applied with $\epsilon=2$, this implies
\begin{equation}
    |\nabla \phi|^2+\frac{1}{4}\Scal|\phi|^2-\frac{1}{2}\langle \nabla f,\nabla |\phi|^2\rangle +\frac{1}{4}|\nabla f|^2|\phi|^2
    \geq \frac{1}{4}\Scal|\phi|^2 \geq 0.
\end{equation}
This shows that the integrand in (\ref{eqn: intermiate minimization eqn 2}) is nonnegative, proving (\ref{eqn: intermediate minimization eqn 0}), and hence Claim 1.
\hfill\qedsymbol
\\\\
\noindent
\emph{Claim 2:} There exists a unique $f_g\in C^{2,\alpha}_{-\tau}$ which solves $\Scal_{f_g}=0$ and $f_g$ globally maximizes $\mathscr{E}_g(\psi_{f},f)$ over all $f\in C^{2,\alpha}_{-\tau}$, with $\psi_{f}$ given by Claim 1.

\emph{Proof of Claim 2.}
Theorem 2.17 of \cite{BO} proves that there exists a unique $f_g\in C^{2,\alpha}_{-\tau}$ solving $\Scal_f=0$. It remains to show that $f_g$ maximizes $\mathscr{E}_g(\psi_{f},f)$ over all $f\in C^{2,\alpha}_{-\tau}$, with $\psi_{f}$ given by Claim 1.
This is achieved by showing that, for any compactly supported smooth function $h$ on $M$,
\begin{equation}\label{eqn: intermediate minimization 3}
    \mathscr{E}_g(\psi_{f_g+h},f_g+h)\leq \mathscr{E}_g(\psi_{f_g},f_g).
\end{equation}
The density of $C^{\infty}_c(M)$ in $C^{2,\alpha}_{-\tau}(M)$ then concludes the proof. For ease of notation, let $f=f_g$ for the remainder of this proof. 

Let $\psi$ be the unique Witten spinor asymptotic to $\psi_0$; that is, solving $D\psi=0$ and $\psi-\psi_0\in C^{2,\alpha}_{-\tau}$. By the unitary equivalence (\ref{eqn: unitary equivalence of Dirac and weighted Dirac}), $\psi_f=e^{f/2}\psi$ and $\psi_{f+h}=e^{h/2}\psi_f$. Since $\Scal_f=0$, the definition of weighted scalar curvature (\ref{eqn: defn of weighted scalar curvature}) implies
\begin{equation}
    \Scal_{f+h}
    =\Scal_f+2\Delta_fh-|\nabla h|^2
    =2\Delta_fh-|\nabla h|^2.
\end{equation}
Using this, combined with the fact that $\psi_{f+h}=e^{h/2}\psi_f$ implies
\begin{align}
    \mathscr{E}_g(\psi_{f+h},f+h)
    &=\int_M\left(4|\nabla \psi_{f+h}|^2+\Scal_{f+h}(|\psi_{f+h}|^2-1)\right)e^{-f-h}dV
    \\
    &=\int_M\left(4|\nabla e^{h/2}\psi_f|^2
    +(2\Delta_fh-|\nabla h|^2)(|e^{h/2}\psi_f|^2-1)\right)e^{-f-h}dV
    \nonumber\\
    &=\int_M\biggl(4|\nabla \psi_f|^2
    +|\nabla h|^2|\psi_f|^2
    +4\Rea \langle \nabla_{\nabla h}\psi_f,\psi_f\rangle 
    \nonumber\\&\qquad \qquad 
    +(2\Delta_fh)|\psi_f|^2-2(\Delta_fh)e^{-h}-|\nabla h|^2|\psi_f|^2+|\nabla h|^2e^{-h}\biggr)e^{-f}dV.
    \nonumber
\end{align}
Integrating the $\Delta_fh$ terms by parts with respect to the measure $e^{-f}dV$ (the boundary terms vanish because $h$ is compactly supported), implies
\begin{align}
    \mathscr{E}_g(\psi_{f+h},f+h)
    &=\int_M\biggl(4|\nabla \psi_f|^2
    +4\Rea \langle \nabla_{\nabla h}\psi_f,\psi_f\rangle 
    \\&\qquad \qquad 
    -4\Rea \langle \nabla_{\nabla h}\psi_f,\psi_f\rangle 
    +2\langle \nabla h,\nabla e^{-h}\rangle
    +|\nabla h|^2e^{-h}\biggr)e^{-f}dV
    \nonumber\\
    &=\int_M\biggl(4|\nabla \psi_f|^2
    -|\nabla h|^2e^{-h}\biggr)e^{-f}dV
    \nonumber\\
    &=\mathscr{E}_g(\psi_f,f)-\int_M|\nabla h|^2\,e^{-f-h}dV.
    \nonumber
\end{align}
Since the last term on the RHS above is nonpositive, this proves 
(\ref{eqn: intermediate minimization 3}), and hence Claim 2.
\end{proof}

\subsection{Ricci flow monotonicity.} 

An \emph{asymptotically Euclidean Ricci flow} is defined to be a Ricci flow starting at an AE manifold. 
The AE conditions are preserved along such a Ricci flow (with the same coordinate system) \cite[Thm.\ 2.2]{L}. 
In this section, $(M^n,g_t)_{t\in I}$ denotes a spin, AE Ricci flow of order $\tau>\frac{n-2}{2}$ whose scalar curvature is nonnegative. The preservation of nonnegative scalar along Ricci flow follows from the maximum principle; see \cite[\S 3.3]{CLN}, for example.

With Theorem \ref{thm: existence and uniqueness of critical points} in hand, 
define $\kappa(t)$ to be the energy of the unique min-max critical point of $\mathscr{E}_{g(t)}$,
\begin{equation}
    \kappa(t)
    =\max_{f} \min_{\psi}\mathscr{E}_{g(t)}(\psi,f).
\end{equation}
This may be seen as an analogue of Perelman's $\lambda$-entropy for closed manifolds.
The main theorem of this section concerns the time derivative of $\kappa(g(t))$ along a Ricci flow $\partial_t g = -2 \Ric(g)$. 
Because the monotonicity theorem applies to the unmodified Ricci flow $\partial_tg=-2\Ric$, the proof uses the following $L^2$-orthogonality lemma.

\begin{lemma}\label{lem: L2 orthogonality of Ric_f and Hess_f}
If $(M^n,g)$ is an AE manifold of order $\tau>\frac{n-2}{2}$ and $f\in C^{2,\alpha}_{-\tau}(M)$ satisfies $\Scal_f=0$, then
\begin{equation}
    \int_M\langle \Hess_f,\Ric_f\rangle \,e^{-f}dV=0.
\end{equation}
\end{lemma}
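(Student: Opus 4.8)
The plan is to exhibit the integrand $\langle\Hess_f,\Ric_f\rangle$ as the weighted divergence of an explicit vector field, so that the integral reduces to a boundary term at infinity, and then to kill that boundary term using the AE decay rates. The key algebraic input is a weighted version of the contracted second Bianchi identity.

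\emph{Step 1: Weighted contracted Bianchi identity.} I would first show that for any weighted manifold,
\[
    \Div_f\Ric_f = \frac{1}{2}\nabla\Scal_f,
\]
where $\Div_f S = \Div S - S(\nabla f,\cdot)$ for a symmetric $2$-tensor $S$. This follows from three standard facts: the contracted second Bianchi identity $\Div\Ric = \frac{1}{2}\nabla\Scal$; the commutation identity $\Div\Hess_f = \nabla\Delta f + \Ric(\nabla f,\cdot)$ (a contraction of the Ricci identity applied to the exact $1$-form $df$); and $\Hess_f(\nabla f,\cdot) = \frac{1}{2}\nabla|\nabla f|^2$. Adding $\Div\Ric$ and $\Div\Hess_f$ and subtracting $\Ric_f(\nabla f,\cdot) = \Ric(\nabla f,\cdot) + \Hess_f(\nabla f,\cdot)$, the two copies of $\Ric(\nabla f,\cdot)$ cancel and what remains is $\frac{1}{2}\nabla\big(\Scal + 2\Delta f - |\nabla f|^2\big) = \frac{1}{2}\nabla\Scal_f$. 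Under the hypothesis $\Scal_f = 0$ this gives $\Div_f\Ric_f = 0$.

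\emph{Step 2: Integration by parts.} Let $X$ be the vector field metrically dual to the $1$-form $\Ric_f(\nabla f,\cdot)$, i.e.\ $X_i = (\Ric_f)_{ij}\nabla_j f$. A short computation, using $\nabla_i(\Ric_f)_{ij} = (\Div\Ric_f)_j$ and the symmetry of $\Hess_f$, gives
\[
    \Div_f X = (\Div_f\Ric_f)(\nabla f) + \langle\Ric_f,\Hess_f\rangle = \langle\Hess_f,\Ric_f\rangle,
\]
the last equality by Step 1. The weighted divergence theorem (Appendix \ref{sec: weighted integration by parts}) then yields
\[
    \int_M\langle\Hess_f,\Ric_f\rangle\,e^{-f}\,dV = \lim_{\rho\to\infty}\int_{S_{\rho}}\Ric_f(\nabla f,\nu)\,e^{-f}\,dA.
\]

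\emph{Step 3: Vanishing of the boundary term.} Finally I would show the right-hand side is $0$; this is the only place the order condition $\tau > \frac{n-2}{2}$ enters. Since $f\in C^{2,\alpha}_{-\tau}(M)$ we have $\nabla f = O(r^{-\tau-1})$, $\Hess_f = O(r^{-\tau-2})$, and $e^{-f}$ is bounded; since $g$ is AE of order $\tau$, $\Ric = O(r^{-\tau-2})$ and hence $\Ric_f = O(r^{-\tau-2})$. Thus the integrand on $S_{\rho}$ is $O(\rho^{-2\tau-3})$, so the boundary integral is $O(\rho^{\,n-1-2\tau-3}) = O(\rho^{\,n-2\tau-4})$, which tends to $0$ because $2\tau > n-2$ forces $n - 2\tau - 4 < -2$. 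The same estimates show $\langle\Hess_f,\Ric_f\rangle$ and $\Div_f X$ are integrable on $M$, which justifies applying the divergence theorem on the noncompact manifold (integrating over $\{r\le\rho\}$ and passing to the limit). The only step requiring genuine care is Step 1, the verification of the weighted Bianchi identity; once that is in hand, Steps 2 and 3 are routine given the weighted divergence theorem and the definitions of the weighted Hölder spaces.
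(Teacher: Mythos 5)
Your proposal is correct and follows essentially the same route as the paper: the weighted contracted Bianchi identity gives $\Div_f\Ric_f=0$ when $\Scal_f=0$, the weighted divergence theorem converts $\int_M\langle\Hess_f,\Ric_f\rangle e^{-f}dV$ into the boundary flux $\lim_{\rho\to\infty}\int_{S_\rho}\Ric_f(\nabla f,\nu)e^{-f}dA$, and the decay rates $\Ric_f(\nabla f,\nu)=O(r^{-2\tau-3})$ versus $|S_\rho|=O(\rho^{n-1})$ kill it for $\tau>\frac{n-2}{2}$. The only difference is cosmetic: you integrate by parts via the explicit vector field $X=\Ric_f(\nabla f,\cdot)^\sharp$ and also supply a derivation of the weighted Bianchi identity, which the paper simply quotes.
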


\begin{proof}
The weighted Bianchi identity $\Div_f(\Ric_f-\frac{1}{2}\Scal_fg)=0$, which holds for all weighted manifolds, combined with the assumption $\Scal_f=0$ imply that $\Div_f(\Ric_f)=0$. Hence, the weighted divergence theorem (Appendix \ref{sec: weighted integration by parts}) and the fact that $\frac{1}{2}\mathscr{L}_{\nabla f}g=\Hess_f$ imply 
\begin{align}
    \int_M\langle \Hess_f,\Ric_f\rangle \,e^{-f}dV
    =\frac{1}{2}\int_M\langle \mathscr{L}_{\nabla f}g,\Ric_f\rangle \,e^{-f}dV
    =\lim_{\rho\to \infty}\int_{S_{\rho}}\Ric_f(\nabla f,\nu)\,e^{-f}dA.
\end{align}
Since the metric $g$ is AE of order $\tau$ and $f\in C^{2,\alpha}_{-\tau}$, the term $\Ric_f(\nabla f,\nu)$ decays like $r^{-2\tau-3}$. On the other hand, the area of $S_{\rho}$ is of order $\rho^{n-1}$. The assumption $\tau>\frac{n-2}{2}$ therefore implies that the above boundary term vanishes.
\end{proof}

\begin{proof}
[Proof of Theorem \ref{thm: monotonicity intro}]
The existence and uniqueness of $f$ and $\psi$ is the content of Theorem \ref{thm: existence and uniqueness of critical points}. 
Appendix \ref{sec: Time derivatives of weighted Witten spinors} proves the existence and regularity of their time derivatives. 
The variational formula (\ref{eqn: derivative of weighted Dirichlet plus Scal_f term}) implies
\begin{align}
    \frac{d}{dt}\int_M&|\nabla \psi|^2\,e^{-f}dV
        \nonumber\\ 
        &=\lim_{\rho\to \infty}\frac{1}{4}\int_{S_{\rho}}\biggl(
        2\langle \dot{g},T_{\psi}(\nu,\cdot)\rangle 
        +8\Rea\langle \dot{\psi},\nabla_{\nu}\psi\rangle
        +2\left(\frac{1}{2}\tr(\dot{g})-\dot{f}\right)\nabla_{\nu}|\psi|^2
        \\&\qquad\qquad
        -\langle \dot{g}(\nu,\cdot),\nabla |\psi|^2\rangle
        +|\psi|^2\biggl\langle\Div_f(\dot{g})-2\nabla\left(\frac{1}{2}\tr(\dot{g})-\dot{f}\right),\nu\biggr\rangle
        \biggr)e^{-f}dA.
        \nonumber
\end{align}
The first four boundary integrals vanish in the limit $\rho\to \infty$. Indeed, since $\psi-\psi_0\in C^{2,\alpha}_{-\tau}(M)$ and $|\psi|\to 1$ at infinity, $\nabla\psi=O(r^{-\tau-1})$ and $T_{\psi}(\nu,\cdot)=O(r^{-\tau-1})$.
Moreover, since $g$ is asymptotically flat of order $\tau$ and $f\in C^{2,\alpha}_{-\tau}(M)$, $\dot{g}=-2(\Ric+\Hess_f)$ is $O(r^{-\tau-2})$. Finally, by Proposition \ref{prop: regularity of time derivatives; weighted case}, $\dot{f}$ and $\dot{\psi}$ are $O(r^{-\tau})$. 
This shows that the first four terms in the above integrand all decay of order at least $r^{-2\tau -1}$; since $\tau>\frac{n-2}{2}$ and since the area of $S_{\rho}$ is of order $\rho^{n-1}$, said four terms all vanish in the limit $\rho\to \infty$.
Hence, only the last term in the integrand above contributes to the limit.

Since $|\psi|\to 1$ uniformly at infinity, the previous equation reduces to
\begin{align}
    \frac{d}{dt}\int_M|\nabla \psi|^2\,e^{-f}dV
        &=\lim_{\rho\to \infty}\frac{1}{4}\int_{S_{\rho}}\biggl\langle\Div_f(\dot{g})-2\nabla\left(\frac{1}{2}\tr(\dot{g})-\dot{f}\right),\nu\biggr\rangle
        \,e^{-f}dA 
\end{align}
Applying the weighted divergence theorem (Appendix \ref{sec: weighted integration by parts}) to the RHS and differentiating the equation $\Scal_f=0$ in time implies by (\ref{eqn: variation of Scal_f}) that 
\begin{align}
    \frac{d}{dt}\int_M|\nabla \psi|^2\,e^{-f}dV
        &=\frac{1}{4}\int_M\biggl(
        \Div_f^2(\dot{g})
        -2\Delta_f\left(\frac{1}{2}\tr(\dot{g})-\dot{f}\right)
        \biggr)e^{-f}dV
        \nonumber\\
        &=\frac{1}{4}\int_M\langle \dot{g},\Ric_f\rangle e^{-f}dV.
\end{align}
By Lemma \ref{lem: L2 orthogonality of Ric_f and Hess_f} and the fact that $\dot{g}=-2\Ric$, the last equation implies
\begin{equation}
    \frac{d}{dt}\int_M|\nabla \psi|^2\,e^{-f}dV
    =-\frac{1}{2}\int_M|\Ric_f|^2 e^{-f}dV.
\end{equation}
\end{proof}

\begin{remark}
In contrast to Perelman’s monotonicity for closed manifolds, which is proved by letting the weight $f$ evolve \emph{parabolically} backwards in time, the monotonicity formula (\ref{eqn: weighted Dirichlet monotonicity}) uses the fact that $f$ and $\psi$ solve the \emph{elliptic} equations $\Scal_f=0$ and $D_f\psi=0$ at each time; their time derivatives contribute only as boundary terms, which vanish due to the AE decay conditions.
\end{remark}

\subsection{Constancy of ADM mass.}

The ADM mass \cite{ADM} of an AE manifold $(M^n,g)$ is defined (up to a constant depending on dimension) by 
\begin{equation}\label{eqn: definition of mass}
    \mass(g)
    =\lim_{\rho\to \infty}\int_{S_{\rho}}(\partial_ig_{ij}-\partial_jg_{ii})\, \partial_j \intprod dV_{g}.
\end{equation}
The definition of mass involves a choice of AE coordinates, however, Bartnik \cite{Ba86} showed that if $\tau>(n-2)/2$ and the scalar curvature is integrable, then the mass is finite and independent of the choice of AE coordinates. If $n\leq 7$ or $(M^n,g)$ admits a spin structure, then the assumptions $\Scal\geq 0$, $\Scal\in L^1(M,g)$, and $\tau>\frac{n-2}{2}$, imply that $\mass(g)$ is nonnegative and is zero if and only if $(M^n,g)$ is isometric to $(\R^n,g_{\euc})$, by the positive mass theorem \cite{SY,W1}.

Witten argued that for any constant spinor $\psi_0$ on the end of $M$ with $|\psi_0|\to 1$ at infinity, there exists a harmonic spinor $\psi$ on $M$ which is asymptotic to $\psi_0$, in the sense that $\psi-\psi_0\in C^{2,\alpha}_{-\tau}(M)$. Such a spinor $\psi$ is called a Witten spinor because the ADM mass of $(M^n,g)$ is given by
\begin{equation}
    \mass(g)=4\int_M\left(|\nabla \psi|^2+\frac{1}{4}\Scal |\psi|^2\right) dV_g,
\end{equation}
which is called Witten's formula for the mass. A rigorous proof of the existence of Witten spinors is given by Parker-Taubes \cite{PT} and Lee-Parker \cite{LP}; their proofs were generalized to weighted AE manifolds in \cite{BO}.

\begin{proof}[Proof of Theorem \ref{thm: Constancy of ADM mass}]
Let $\psi$ be a Witten spinor, so $D\psi =0$. The variational formula (\ref{eqn: derivative of weighted Dirichlet plus Scal_f term}) applied with $f=\dot{f}=0$ reduces to
\begin{align}
    \frac{d}{dt}\int_M\left( 4|\nabla \psi|^2+\Scal|\psi|^2\right)dV
        &=\lim_{\rho\to \infty}\int_{S_{\rho}}\biggl(
        2\langle \dot{g},T_{\psi}(\nu,\cdot)\rangle 
        +8\Rea\langle \dot{\psi},\nabla_{\nu}\psi\rangle
        +\tr(\dot{g})\nabla_{\nu}|\psi|^2
        \\&\qquad\qquad
        -\langle \dot{g}(\nu,\cdot),\nabla |\psi|^2\rangle
        +|\psi|^2\langle\Div(\dot{g})-\nabla\tr(\dot{g}),\nu\rangle
        \biggr)dA.
        \nonumber
\end{align}
Using Proposition \ref{prop: Regularity of time derivatives; unweighted case}, the first four boundary integrals vanish in the limit $\rho\to \infty$ due to the asymptotic decay of these terms, as in the proof of the monotonicity theorem, Theorem \ref{thm: monotonicity intro}.
Hence, only the last term in the integrand above contributes to the limit.

The Bianchi identity $\Div(\Ric)=\frac{1}{2}\nabla \Scal$ applied to the previous equation yields
\begin{align}
    \frac{d}{dt}\int_M&\left( 4|\nabla \psi|^2+\Scal|\psi|^2\right)dV
        =\lim_{\rho\to \infty}\int_{S_{\rho}}
        |\psi|^2(\nabla_{\nu}\Scal)
        \,dA.
\end{align}
The latter boundary term vanishes since $|\psi|\to 1$ uniformly at infinity and by \cite[Lem.\ 11]{MS},
\begin{equation}
    \lim\limits_{\rho\to \infty}\int_{S_{\rho}}|\nabla \Scal|\,dA = 0.
\end{equation}
 
\end{proof}

\begin{remark}
The result of the above calculation agrees with that of \cite[p.\ 1843]{L}, where it was shown by different means that under the Ricci flow $\partial_tg=-2\Ric$, the ADM mass evolves by 
\begin{equation}
    \frac{d}{dt}\mass(g_t)=\lim_{\rho\to \infty}\int_{S_{\rho}}(\nabla_{\nu}\Scal) \, dA=0.
\end{equation}
\end{remark}
\appendix
\section{Appendix} \label{sec: appendix}
\subsection{Time derivatives of weighted Witten spinors.}\label{sec: Time derivatives of weighted Witten spinors}

The purpose of this appendix is to prove the existence and regularity of time derivatives of weighted Witten spinors along the Ricci flow. 
The argument is based on the existence theorem for ordinary differential equations in Banach spaces.

On an AE spin manifold $(M^n,g)$, the asymptotic coordinates define a positive function $r$, which equals the Euclidean distance to the origin on the end of $M$, and which can be extended to a smooth function which is bounded below by 1 on all of $M$. Using $r$, the weighted $C^k$ space $C^k_{\beta}(M)$ is defined for $\beta\in \R$ as the set of $C^k$ functions $u$ on $M$ for which the norm
\begin{equation*}
    \|u\|_{C^k_{\beta}}
        =\sum_{i=0}^k\sup_Mr^{-\beta +i}|\nabla^iu|
\end{equation*}
is finite. The weighted H\"older space $C^{k,\alpha}_{\beta}(M)$ is defined for $\alpha \in (0,1)$ as the set of $u\in C^k_{\beta}(M)$ for which the norm
\begin{equation*}
    \|u\|_{C^{k,\alpha}_{\beta}}
        =\|u\|_{C^k_{\beta}}
            +\sup_{x,y}\;(\min\{r(x),r(y)\})^{-\beta+k+\alpha}\frac{|\nabla^ku(x)-\nabla^ku(y)|}{d(x,y)^{\alpha}}
\end{equation*}
is finite. These definitions of weighted H\"older spaces coincide with those of \cite[\S 9]{LP}. In particular, the index $\beta$ denotes the {\it order of growth}: functions in $C^{k,\alpha}_{\beta}(M)$ grow at most like $r^{\beta}$. Note that the definitions of the weighted function spaces depend on the ``distance function'' $r$, and thereby on the choice of asymptotic coordinates. However, it is easy to see that $r$ is uniformly equivalent to the geodesic distance from an arbitrary fixed point in $M$ as $r\to \infty$, hence all choices of $r$ define equivalent norms. For the remainder of this appendix, fix $\alpha\in (0,1)$.

\begin{lemma}\label{lem: Dirac is isomorphism}
If $(M^n,g,f)$ is a weighted, AE manifold of order $\tau\in \left(\frac{n-2}{2},n-2\right)$ and $f\in C^{2,\alpha}_{-\tau}(M)$ satisfies $\Scal_f\geq 0$, then
\begin{equation*}
    D_f:C^{2,\alpha}_{-\tau}(M)\to C^{1,\alpha}_{-\tau-1}(M)
\end{equation*}
is an isomorphism. 
\end{lemma}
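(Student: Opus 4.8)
The plan is to reduce the weighted statement to the classical Fredholm theory for the unweighted Dirac operator on AE manifolds, then deduce the isomorphism property from injectivity plus an index/self-adjointness argument. The key observation is the unitary equivalence \eqref{eqn: unitary equivalence of Dirac and weighted Dirac}: $D_f\psi = e^{f/2}D(e^{-f/2}\psi)$. Since $f\in C^{2,\alpha}_{-\tau}(M)$, multiplication by $e^{\pm f/2}$ is a bounded isomorphism of each weighted H\"older space $C^{k,\alpha}_{\beta}(M)$ onto itself (this uses $f\to 0$ at infinity together with the decay of its derivatives, so that $e^{\pm f/2}-1\in C^{2,\alpha}_{-\tau}$ and the product estimates for weighted H\"older spaces apply). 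Hence $D_f$ is an isomorphism $C^{2,\alpha}_{-\tau}\to C^{1,\alpha}_{-\tau-1}$ if and only if $D$ is, and it suffices to treat the unweighted operator $D$.

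For the unweighted operator I would invoke the standard theory (Bartnik, Lee–Parker, Parker–Taubes): on an AE manifold of order $\tau$, $D:C^{2,\alpha}_{\beta}(M)\to C^{1,\alpha}_{\beta-1}(M)$ is Fredholm for all nonexceptional weights $\beta$, with index independent of $\beta$ in each connected range of nonexceptional weights, and the formal adjoint is $D$ itself. The exceptional values are the negative integers (and, after shifting, their reflections), so for $\beta=-\tau$ with $\tau\in(\tfrac{n-2}{2},n-2)$ — in particular $\tau\notin\Z$ generically, but more importantly $-\tau\in(-(n-2),-\tfrac{n-2}{2})$ avoids the relevant exceptional set in the Fredholm window — $D$ is Fredholm. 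The first step is therefore to record that $D$ on this weight is Fredholm of index $0$: the index can be computed by deforming $\beta$ to the symmetric value and using that $D$ is formally self-adjoint, so $\mathrm{ind}(D_{\beta}) + \mathrm{ind}(D_{-\beta-(n-1)})=0$ type reasoning, or more simply by citing that in the ``Witten range'' the index vanishes.

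The second step is injectivity, which is where the scalar curvature hypothesis enters. Suppose $\psi\in C^{2,\alpha}_{-\tau}(M)$ with $D_f\psi=0$. By the unitary equivalence it is equivalent to show $D\phi=0$, $\phi\in C^{2,\alpha}_{-\tau}$, forces $\phi=0$; alternatively, work directly with $D_f$. Using the weighted Lichnerowicz formula \eqref{eqn: weighted Lichnerowicz}, $D_f^2=-\Delta_f+\tfrac14\Scal_f$, and integrating by parts against $e^{-f}dV$ — the boundary term at $S_\rho$ decays like $r^{-2\tau-1}$ times the area $r^{n-1}$, hence vanishes as $\rho\to\infty$ because $\tau>\tfrac{n-2}{2}$ — one obtains
\begin{equation*}
    0=\int_M\langle D_f^2\psi,\psi\rangle\,e^{-f}dV=\int_M\Big(|\nabla\psi|^2+\tfrac14\Scal_f|\psi|^2\Big)e^{-f}dV.
\end{equation*}
If $\Scal_f\ge0$ this immediately forces $\nabla\psi=0$, hence $|\psi|$ is constant, hence $\psi\equiv0$ since $\psi\to0$ at infinity. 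The hypothesis here is only $\Scal_f\ge0$; note however that the Lemma as stated assumes $\Scal_f\ge0$, so no further massaging of the integrand (of the Kato–Peter–Paul type used in the proof of Claim~1) is needed — though if one wanted the weaker hypothesis $\Scal\ge0$ one would run that argument here as well. Combining index $0$ with trivial kernel gives trivial cokernel, so $D_f$ is an isomorphism.

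The main obstacle I anticipate is purely bookkeeping rather than conceptual: verifying that the weight $\beta=-\tau$ with $\tau\in(\tfrac{n-2}{2},n-2)$ is genuinely nonexceptional for $D$ (so that Fredholmness holds) and that the index is $0$ on this range, citing the precise statements in \cite{Ba86, LP, PT, BO}; and checking carefully that multiplication by $e^{\pm f/2}$ preserves the weighted H\"older spaces with their norms, which requires the full strength of $f\in C^{2,\alpha}_{-\tau}$ including the decay of $\nabla f$ and $\nabla^2 f$. Everything else follows from the two ingredients above — unitary equivalence to transfer to the unweighted case, and the Lichnerowicz/integration-by-parts identity for injectivity — both of which are already available in the excerpt.
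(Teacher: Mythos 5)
Your injectivity argument coincides exactly with the paper's: $D_f\psi=0$, the weighted Lichnerowicz formula, integration by parts (with the boundary term killed by $\tau>\frac{n-2}{2}$), and $\Scal_f\ge 0$ force $\nabla\psi=0$ and hence $\psi=0$. Where you genuinely diverge is surjectivity. The paper never invokes Fredholm or index theory for the first-order operator: given $\xi\in C^{1,\alpha}_{-\tau-1}$, it uses the already-established isomorphism $D_f^2:C^{2,\alpha}_{-\tau}\to C^{0,\alpha}_{-\tau-2}$ (cited from \cite{BO}, and resting on the second-order weighted H\"older theory of Lee--Parker, which is where the hypothesis $\tau<n-2$ enters) to solve $D_f^2\psi=D_f\xi$, and then applies the injectivity argument to $\phi=D_f\psi-\xi\in C^{1,\alpha}_{-\tau-1}$, which satisfies $D_f\phi=0$, to conclude $D_f\psi=\xi$. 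Your route --- conjugating to $D$ via the unitary equivalence (the multiplication operators $e^{\pm f/2}$ do preserve the weighted H\"older spaces, as you say) and then arguing Fredholm-of-index-zero from formal self-adjointness and the absence of exceptional weights in the relevant range --- is sound in outline, but the Fredholm/index package for a \emph{first-order} operator on weighted \emph{H\"older} spaces is not directly on record in the sources you cite: \cite{LP} Thm.\ 9.2 is stated for Laplace-type operators, and the Dirac-operator Fredholm theory in \cite{Ba86, PT} is set up on weighted Sobolev spaces, so you would need to transfer it by elliptic regularity or redo the Schauder theory. The paper's detour through $D_f^2$ buys exactly this: it reduces to the second-order setting where the needed isomorphism is already available. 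A minor trade-off in the other direction: once the first-order theory is in place, your argument would in principle need only that $-\tau$ avoid the exceptional set of $D$ (the decay range $(-(n-1),0)$), whereas routing through $D_f^2$ genuinely uses $\tau<n-2$.
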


\begin{proof}
To show injectivity, suppose that $\psi\in C^{2,\alpha}_{-\tau}(M)$ satisfies $D_f\psi =0$. The weighted Lichnerowicz formula and integration by parts imply
\begin{align*}
    0
    &=\int_M\langle D^2_f\psi,\psi\rangle \,e^{-f} dV_g 
    =\int_M\left(-\langle \dL_f\psi,\psi\rangle +\frac{1}{4}\Scal_f |\psi|^2 \right)e^{-f} dV_g 
    =\int_M \left( |\nabla \psi|^2+\frac{1}{4}\Scal_f |\psi|^2\right)e^{-f} dV_g,
\end{align*}
because the boundary term vanishes if $\tau > \frac{n-2}{2}$.
Since $\Scal_f\geq 0$, this shows that $\nabla \psi=0$, so $\nabla |\psi|^2=0$. Thus $|\psi|$ is a constant, which is zero since $\psi$ vanishes at infinity. This proves $D_f$ is injective.

To show surjectivity, let $\xi \in C^{1,\alpha}_{-\tau-1}(M)$. 
Since
$
D_f^2:C^{2,\alpha}_{-\tau}(M)\to C^{0,\alpha}_{-\tau-2}(M)
$
is an isomorphism under the stated assumptions \cite[Lem.\ A.3]{BO}, there exists $\psi\in C^{2,\alpha}_{-\tau}(M)$ such that $D^2_f\psi=D_f\xi$. Then the spinor $\phi=D_f\psi-\xi$ satisfies $D_f\phi=0$ and lies in $C^{1,\alpha}_{-\tau-1}(M)$. Hence integration by parts as above implies $\phi=0$. Thus $D_f\psi=\xi$, showing that $D_f$ is surjective.
\end{proof}

For the remainder of this appendix, let $(M^n,g(t))_{t\in I}$ be an AE Ricci flow satisfying 
\begin{equation}
    \Scal\geq 0 \qquad \text{and} \qquad \frac{n-2}{2}<\tau < n-2.
\end{equation}
On such an AE Ricci flow, the ``distance function'' $r$ on $M$ is defined independently of time, since the AE condition is preserved by Ricci flow (with the same AE coordinates). 
Moreover, since the metrics $g(t)$, for $t\in I$, are uniformly equivalent by the AE condition, the $C^{k,\alpha}_{\beta}(M)$ weighted H\"older norm, defined with respect to $g(t)$, is equivalent, for all small $t\in I$, to the norm defined with respect to $g(0)$. This identification of the weighted H\"older spaces at different times along a Ricci flow is used implicitly in what follows.

\begin{lemma}
[Time derivative of $f$]\label{lem: time derivative of f}
For all small times along the Ricci flow, the boundary value problem $\Scal_f=0$ with $f\in C^{2,\alpha}_{-\tau}(M)$ admits a family of solutions which is $C^1$ in time, and whose time derivative $\dot{f}$ lies in $C^{2,\alpha}_{-\tau}(M)$.
\end{lemma}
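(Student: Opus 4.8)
The plan is to realize the family $t\mapsto f_{g(t)}$ as the solution of an ordinary differential equation in the Banach space $C^{2,\alpha}_{-\tau}(M)$, following the strategy announced at the start of the appendix. Introduce the map $F(t,f)=\Scal_{g(t)}+2\Delta_{g(t)}f-|\nabla^{g(t)}f|^2_{g(t)}$, so that by the definition (\ref{eqn: defn of weighted scalar curvature}) of the weighted scalar curvature, the equation $\Scal_f=0$ is equivalent to $F(t,f)=0$; for a fixed metric $g(t)$, the Fréchet derivative of $F$ in the $f$-variable is $D_fF(t,f)=2\Delta^{g(t)}_f$, where $\Delta_f=\Delta-\nabla_{\nabla f}$ is the drift Laplacian on functions. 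The first step is to record the scalar analogue of Lemma \ref{lem: Dirac is isomorphism}: for every $f\in C^{2,\alpha}_{-\tau}(M)$ and every time,
\[
\Delta^{g(t)}_f:C^{2,\alpha}_{-\tau}(M)\to C^{0,\alpha}_{-\tau-2}(M)
\]
is an isomorphism. This follows from the Fredholm theory of the Laplacian on AE manifolds \cite{Ba86,LP} (the weight $-\tau$ is non-exceptional since $0<\tau<n-2$, and the lowest-order term $-\nabla_{\nabla f}$ is a compact perturbation), together with the injectivity obtained by integrating $\Delta_fu=0$ by parts against $u\,e^{-f}\,dV$, the boundary term vanishing because $\tau>\frac{n-2}{2}$.

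Next, define $\Psi(t,f)=-\tfrac12\,(\Delta^{g(t)}_f)^{-1}\big[(\partial_tF)(t,f)\big]$, where $(\partial_tF)(t,f)$ denotes the $t$-derivative at fixed $f$. By the variational formula (\ref{eqn: variation of Scal_f}) applied with $\dot g=-2\Ric$ and $\dot f=0$, one has $(\partial_tF)(t,f)=-2\Div^2_f(\Ric)+2\Delta_f\Scal+2\langle\Ric,\Ric_f\rangle$, which lies in $C^{0,\alpha}_{-\tau-2}(M)$ — indeed it decays like $r^{-\tau-4}$, using that $\Ric$ and $\Scal$ are $O(r^{-\tau-2})$ on an AE manifold of order $\tau$ — so $\Psi$ takes values in $C^{2,\alpha}_{-\tau}(M)$. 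Because the AE structure is preserved along the flow with fixed asymptotic coordinates \cite[Thm.\ 2.2]{L}, the metric $g(t)$ and its first two derivatives depend, by parabolic regularity, continuously (in fact $C^1$) on $t$ in the weighted Hölder topology; combined with the smooth (indeed polynomial) dependence of $F$ and $\partial_tF$ on $f$, this shows $\Psi$ is continuous in $t$ and locally Lipschitz in $f$. The Picard--Lindelöf theorem in $C^{2,\alpha}_{-\tau}(M)$ then produces, for all small times, a unique $C^1$ solution $f(t)$ of $\dot f=\Psi(t,f)$ with $f(0)=f_{g(0)}$.

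It remains to identify $f(t)$ with $f_{g(t)}$ and to read off the regularity of $\dot f$. Differentiating along the solution, $\tfrac{d}{dt}F(t,f(t))=(\partial_tF)(t,f(t))+2\Delta_{f(t)}\big[\Psi(t,f(t))\big]=0$, while $F(0,f(0))=\Scal_{f_{g(0)}}=0$; hence $\Scal_{f(t)}=0$ for all small $t$, and by the uniqueness of solutions of $\Scal_f=0$ in $C^{2,\alpha}_{-\tau}(M)$ \cite[Thm.\ 2.17]{BO}, $f(t)=f_{g(t)}$. Finally $\dot f(t)=\Psi(t,f(t))=-\tfrac12(\Delta_{f(t)})^{-1}\big[(\partial_tF)(t,f(t))\big]\in C^{2,\alpha}_{-\tau}(M)$ (in fact in $C^{2,\alpha}_{-\tau-2}(M)$), as claimed.

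The main obstacle is the verification of the hypotheses of the Banach-space existence theorem: one must show that $t\mapsto g(t)$ is $C^1$ with values in a weighted Hölder space of order $-\tau$ with two derivatives controlled — equivalently, that $(\partial_tF)(t,f)$ is continuous in $t$ in the $C^{0,\alpha}_{-\tau-2}$ topology — which rests on the preservation of the AE structure along Ricci flow and on uniform parabolic estimates up to spatial infinity; and one must establish the isomorphism property of the weighted Laplacian above. Once these are in place the remainder is a routine application of Picard--Lindelöf together with the uniqueness statement of \cite{BO}. (Equivalently, the same conclusion follows from the implicit function theorem applied to $F$ at $(0,f_{g(0)})$, using that $D_fF=2\Delta_f$ is an isomorphism.)
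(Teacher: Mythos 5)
Your argument is correct, and it reaches the lemma by a genuinely different route from the paper's, though both are ODE arguments in the Banach space $C^{2,\alpha}_{-\tau}(M)$. The paper first \emph{linearizes the equation itself} through the substitution $v=e^{-f/2}-1$, which converts $\Scal_f=0$ into the linear problem $L_tv=-\Scal$ with $L_t=-4\Delta_{g(t)}+\Scal_{g(t)}$ independent of the unknown; differentiating in time then yields an affine ODE $\dot v=-L_t^{-1}\dot L_tv-L_t^{-1}\dot\Scal$ whose vector field is globally Lipschitz with constant $\sup_s\|L_s^{-1}\dot L_s\|_{\mathrm{op}}$, and the only isomorphism needed is that of the fixed operator $L_t$ (injectivity from $\Scal\ge 0$, surjectivity from $\tau<n-2$). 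You instead keep the quasilinear equation $F(t,f)=\Scal_f=0$ and linearize only in the $f$-direction, correctly identifying $D_fF=2\Delta_f$ and establishing that $\Delta_f:C^{2,\alpha}_{-\tau}\to C^{0,\alpha}_{-\tau-2}$ is an isomorphism (your Fredholm-plus-compact-perturbation argument is sound, and, pleasantly, needs no sign condition on $\Scal$). The price is that your ODE $\dot f=-\tfrac12(\Delta_f)^{-1}[(\partial_tF)(t,f)]$ is genuinely nonlinear: the operator being inverted depends on the unknown, so the local Lipschitz property of the vector field requires local uniform boundedness and Lipschitz dependence of $f\mapsto(\Delta_f)^{-1}$ in operator norm --- this does follow from the openness of the set of isomorphisms and the identity $A^{-1}-B^{-1}=A^{-1}(B-A)B^{-1}$, but it is the one point where your argument is more delicate than the paper's and should be stated explicitly. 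The payoff is that you avoid the substitution entirely (and with it the check that $1+v(t)$ stays positive), and your closing remark is apt: the implicit function theorem applied to $F$ at $(0,f_{g(0)})$ packages the same idea most cleanly. Two small caveats. First, the parenthetical claim $\dot f\in C^{2,\alpha}_{-\tau-2}$ is not justified as stated: the weight $-\tau-2$ can be exceptional for the Laplacian when $\tau\in(n-4,n-2)$ (the solution may pick up an $r^{2-n}$ term), so you should retain only $\dot f\in C^{2,\alpha}_{-\tau}$, which is all the lemma asserts. Second, your $(\partial_tF)$ contains $\Div_f^2(\Ric)$ and hence four spatial derivatives of the metric, so the continuity in $t$ you invoke rests, exactly as in the paper's proof, on Li's regularity results and on the initial metric being smooth; it is worth flagging that the needed input is more than two derivatives of $g$.
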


\begin{proof}
The argument below proves the Lemma assuming a suitable solution of $\Scal_f=0$ exists at the initial time; existence of such a solution at the initial time follows from Theorem \ref{thm: existence and uniqueness of critical points}.

Define the time-dependent, linear operator
\begin{equation*}
    L_t:C^{2,\alpha}_{-\tau}(M)\to C^{0,\alpha}_{-\tau-2}(M)
    \qquad \qquad 
    L_tv=-4\Delta_{g(t)}v +\Scal_{g(t)}v.
\end{equation*}
When the choice of $t$ is clear from the context, $L_t$ is written as $L$ to simplify notation. The elliptic boundary value problem $\Scal_f=0$ with $f\in C^{2,\alpha}_{-\tau}$ can then be reformulated as
\begin{equation}\label{eqn: equation for v}
    Lv=-\Scal \qquad \text{for} \qquad v=e^{-f/2}-1\in C^{2,\alpha}_{-\tau}(M).
\end{equation}

The proof of the lemma is based on the existence of solutions to ordinary differential equations in the Banach space $C^{2,\alpha}_{-\tau}(M)$. If $v$ solves (\ref{eqn: equation for v}) along the Ricci flow and $v$ is $C^1$ in time, taking time derivatives of (\ref{eqn: equation for v}) implies
\begin{align}\label{eqn: intermediate dot v}
    \dot{L}v+L\dot{v}
    =-\dot{\Scal}.
\end{align}
The evolution equation for scalar curvature (\ref{eqn: variation of Scal_f}) along Ricci flow implies $\dot{\Scal}=\Delta\Scal+2|\Ric|^2$.
Above, the operator $\dot{L}:C^{2,\alpha}_{-\tau}(M)\to C^{0,\alpha}_{-\tau-2}(M)$ is the variation of $L_t$ along the Ricci flow, and is given by
\begin{equation}\label{eqn: variation of L}
    \dot{L}v
    =-8\langle \Ric,\Hess_v\rangle +(\Delta \Scal+2|\Ric|^2)v,
\end{equation}
the geometric quantities on the right above naturally being evaluated at time $t$; this formula follows from the variations of the Laplacian and scalar curvature along Ricci flow \cite[Lem.\ 2.30]{CLN}. Further, Li \cite{L} showed that if $g_{ij}(0)-\delta_{ij}\in C^k_{-\tau}$, then $g_{ij}(t)-\delta_{ij}\in C^{k-2}_{-\tau}$ for $t\geq 0$. 
In particular, if the metric $g(0)$ is initially smooth, then (\ref{eqn: intermediate dot v}) and (\ref{eqn: variation of L}) indeed are well-defined in $C^{0,\alpha}_{-\tau-2}$.

Equation (\ref{eqn: intermediate dot v}) can be inverted to obtain an ODE for the time derivative,
\begin{equation}\label{eqn: ODE for v}
    \dot{v}=-L^{-1}\dot{L}v-L^{-1}\dot{\Scal},
\end{equation}
because the operator $L_t$ is an isomorphism for each time. Indeed, a simple integration by parts argument using nonnegativity of $\Scal\in C^{0,\alpha}_{-\tau-2}$ and the decay conditions implies injectivity of $L$; surjectivity then follows from \cite[Thm.\ 9.2(d)]{LP}, since $\tau<n-2$.

Let $f(0)\in C^{2,\alpha}_{-\tau}(M)$ solve $\Scal_f=0$ at the initial time and define $v(0)=e^{-f(0)/2}+1$, so that $L_0v(0)=-\Scal_{g(0)}$ by (\ref{eqn: equation for v}). 
To prove the Lemma, it suffices to show that the above ODE in $C^{2,\alpha}_{-\tau}(M)$ admits a solution on some time interval $[0,\epsilon]$; indeed, if this is the case, then the fundamental theorem of calculus and (\ref{eqn: intermediate dot v}) imply that if $v(t)$ is defined by
\begin{equation}
    v(t)=v(0)+\int_0^t\dot{v}(s)\,ds,
\end{equation}
then $v(t)$ is $C^1$ in time and satisfies $L_tv(t)+\Scal_{g(t)}=L_0v(0)+\Scal_{g(0)}=0$. Hence if $v(0)$ solves (\ref{eqn: equation for v}), then $f(t):=-2\log(1+v(t))$ solves $\Scal_f=0$ on this time interval. Note that by continuity, if $1+v(0)$ is strictly positive, then $1+v(t)$ remains strictly positive for small time, so $f(t)$ is well-defined.

By the contraction mapping theorem, the existence for solutions of the ODE (\ref{eqn: ODE for v}) in $C^{2,\alpha}_{-\tau}(M)$ are guaranteed \cite[Thm.\ 9.4]{Co} on a time interval $[0,\epsilon]$, as long as the time-dependent vector field 
\begin{equation}
    X:[0,\epsilon]\times C^{2,\alpha}_{-\tau}(M)\to C^{2,\alpha}_{-\tau}(M)
    \qquad \qquad 
    X(t,v)=-L_t^{-1}\dot{L}_tv-L_t^{-1}\dot{\Scal}
\end{equation}
defined by the ODE is locally Lipschitz in the second variable and $\epsilon$ is chosen small enough (depending on the Lipschitz constant). The vector field $X$ is indeed locally Lipschitz, since
\begin{align}
    \|X(t,v_1)-X(t,v_2)\|_{C^{2,\alpha}_{-\tau}}
    &= \|L_t^{-1}\dot{L}_t(v_1-v_2)\|_{C^{2,\alpha}_{-\tau}}
    \nonumber\\
    &\leq \sup_{s\in [0,\epsilon]} \|L_s^{-1}\dot{L}_s\|_{\mathrm{op}}\|v_1-v_2\|_{C^{2,\alpha}_{-\tau}}.
\end{align}
The Lipschitz constant $L$ of the vector field $X$, given by
\begin{equation}
    L=\sup_{s\in [0,\epsilon]} \|L_s^{-1}\dot{L}_s\|_{\mathrm{op}},
\end{equation}
is finite because the time interval $[0,\epsilon]$ is compact, the operator $L_t$ varies smoothly along the Ricci flow, and the curvature and its derivatives are uniformly bounded along the flow by \cite{L}, hence $\dot{L}_s$ is also bounded from \eqref{eqn: variation of L}.
\end{proof}

Recall that the spin bundle depends on the Riemannian metric, though the spin bundles for different metrics are always isomorphic. 
In contrast to the generalized cylinder construction of Section \ref{sec: Spin geometry of generalized cylinders}, the existence and regularity of time derivatives of Witten spinors along an AE Ricci flow $(M^n,g(t))_{t\in I}$ is proven here using the time-dependent isometries \cite{BG} of the spin bundles 
\begin{equation*}
    \sigma_t:\Sigma_{g(t)}M\to \Sigma_{g(0)}M.
\end{equation*}
This allows for a convenient ODE formulation for a Witten spinor along the Ricci flow.

Since the AE coordinates are preserved along the flow, the notion of a spinor which is ``constant'' at infinity is defined independently of time. For the remainder of this appendix, fix a smooth spinor $\psi_0$ in the $g(0)$-spin bundle which is constant at infinity and of norm 1. 
Further, in the following two propositions, the H\"older space $C^{k,\alpha}_{\beta}(M)$ denotes the space of sections of the $g(0)$-spin bundle decaying suitably.

\begin{proposition}
[Time derivative of Witten spinor]
\label{prop: Regularity of time derivatives; unweighted case}
For all small times along the Ricci flow, the boundary value problem $D\psi=0$ with $\psi-\psi_0\in C^{2,\alpha}_{-\tau}(M)$ admits a family of solutions which is $C^1$ in time, and the time derivative $\dot{\psi}$ lies in $C^{2,\alpha}_{-\tau}(M)$.
\end{proposition}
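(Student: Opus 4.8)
The plan is to mirror the proof of Lemma~\ref{lem: time derivative of f}: recast the Witten spinor equation along the flow as an ordinary differential equation in the Banach space $C^{2,\alpha}_{-\tau}(M)$ of sections of the $g(0)$-spin bundle, and apply the existence theorem for ODEs in Banach spaces. First I would transport the time-$t$ Dirac operator to the fixed bundle using the isometries $\sigma_t:\Sigma_{g(t)}M\to\Sigma_{g(0)}M$ of \cite{BG}, setting $\mathcal{D}_t:=\sigma_t\circ D_{g(t)}\circ\sigma_t^{-1}$. Since the metrics $g(t)$ are uniformly equivalent and $\sigma_t$ is a fiberwise isometry, $\sigma_t^{\pm1}$ are bounded between the weighted H\"older spaces with norms controlled on compact time intervals; combined with Lemma~\ref{lem: Dirac is isomorphism} applied to $(M,g(t))$ with $f=0$ (using that $\Scal_{g(t)}\ge0$ is preserved by the flow and that $\tfrac{n-2}{2}<\tau<n-2$), this shows that
\[
    \mathcal{D}_t:C^{2,\alpha}_{-\tau}(M)\to C^{1,\alpha}_{-\tau-1}(M)
\]
is an isomorphism for each $t$, with inverse bounded uniformly for $t$ in a compact interval.

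Next I would derive and solve the ODE. Writing a candidate family of Witten spinors as $\psi_t=\sigma_t^{-1}(\psi_0+\phi_t)$ with $\phi_t\in C^{2,\alpha}_{-\tau}(M)$, the equation $D_{g(t)}\psi_t=0$ becomes $\mathcal{D}_t(\psi_0+\phi_t)=0$; differentiating in $t$ gives $\dot{\mathcal{D}}_t(\psi_0+\phi_t)+\mathcal{D}_t\dot{\phi}_t=0$, i.e.\
\[
    \dot{\phi}_t=X(t,\phi_t):=-\mathcal{D}_t^{-1}\bigl(\dot{\mathcal{D}}_t(\psi_0+\phi_t)\bigr).
\]
The vector field $X$ is affine, hence globally Lipschitz, in its second argument, with Lipschitz constant $\sup_{s\in[0,\epsilon]}\|\mathcal{D}_s^{-1}\dot{\mathcal{D}}_s\|_{\mathrm{op}}$; this is finite provided $\dot{\mathcal{D}}_t$ is a bounded operator $C^{2,\alpha}_{-\tau}(M)\to C^{1,\alpha}_{-\tau-1}(M)$ with norm bounded on compact intervals, which I would verify from the formula for the variation of the Dirac operator (cf.\ Lemma~\ref{lem: variation of Dirac operator}) together with the decay $\dot g=-2\Ric=O(r^{-\tau-2})$, $\nabla\dot g=O(r^{-\tau-3})$ of the AE Ricci flow and Li's uniform curvature estimates \cite{L} (which also bound the zeroth-order $\dot{\sigma}_t$ contributions). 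Since the initial correction $\phi_0$ exists by the proof of Witten's positive mass theorem (equivalently Theorem~\ref{thm: existence and uniqueness of critical points} with $f=0$), the Picard--Lindel\"of theorem in Banach spaces \cite[Thm.\ 9.4]{Co} produces a unique $C^1$ solution $\phi_t$ on a short interval $[0,\epsilon]$ with $\dot{\phi}_t=X(t,\phi_t)\in C^{2,\alpha}_{-\tau}(M)$.

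Finally I would confirm that $\psi_t:=\sigma_t^{-1}(\psi_0+\phi_t)$ really is the Witten spinor of $g(t)$: by the fundamental theorem of calculus and the ODE, $\tfrac{d}{dt}\bigl[\mathcal{D}_t(\psi_0+\phi_t)\bigr]=\dot{\mathcal{D}}_t(\psi_0+\phi_t)+\mathcal{D}_t\dot{\phi}_t=0$, so $\mathcal{D}_t(\psi_0+\phi_t)\equiv\mathcal{D}_0(\psi_0+\phi_0)=0$; uniqueness in Lemma~\ref{lem: Dirac is isomorphism} then identifies $\psi_t$ with the unique Witten spinor at each time, and its time derivative in the trivialization given by $\sigma_t$ equals $\dot{\phi}_t$, which lies in $C^{2,\alpha}_{-\tau}(M)$ as desired. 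The main obstacle is the analytic bookkeeping in the middle step: showing that $\dot{\mathcal{D}}_t$ --- including the contribution of the time-dependent spin identification $\sigma_t$ --- maps $C^{2,\alpha}_{-\tau}(M)$ into $C^{1,\alpha}_{-\tau-1}(M)$ (so that $\mathcal{D}_t^{-1}\dot{\mathcal{D}}_t$ acts on $C^{2,\alpha}_{-\tau}(M)$) and has operator norm bounded uniformly in $t$, ensuring the Lipschitz constant of $X$ is finite and the ODE solution persists on a uniform time interval.
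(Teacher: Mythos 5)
Your proposal is correct and follows essentially the same route as the paper: the paper likewise transports the Dirac operator to the fixed $g(0)$-spin bundle via the Bourguignon--Gauduchon isometries, writes the Witten spinor as $\psi_0$ plus a decaying correction, inverts the differentiated equation using the isomorphism of Lemma \ref{lem: Dirac is isomorphism} to obtain an affine ODE in $C^{2,\alpha}_{-\tau}(M)$, and applies Picard--Lindel\"of together with Li's uniform curvature bounds and the fundamental theorem of calculus. The only differences are notational.
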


\begin{proof}
The argument below proves the proposition assuming a Witten spinor exists at the initial time; existence at the initial time follows from Witten's proof of the positive mass theorem \cite{PT, LP}.

Define the time-dependent, linear operator
\begin{equation*}
    P_t:C^{2,\alpha}_{-\tau}(M)\to C^{1,\alpha}_{-\tau-1}(M)
    \qquad \qquad 
    P_t\psi=\sigma_tD_t\sigma_t^{-1}\psi.
\end{equation*}
When the choice of $t$ is clear from the context, $P_t$ is written as $P$ to simplify notation. 
Note that $\psi$ satisfies $P_t\psi=0$ and $\psi-\psi_0\in C^{2,\alpha}_{-\tau}$ if and only if $\sigma_t^{-1}\psi$ is a Witten spinor for the metric $g(t)$.
The elliptic boundary value problem $P\psi=0$ with $\psi-\psi_0\in C^{2,\alpha}_{-\tau}$ can be reformulated as
\begin{equation}\label{eqn: equation for xi}
    P\xi=-P\psi_0 \qquad \text{for} \qquad \xi=\psi-\psi_0\in C^{2,\alpha}_{-\tau}(M).
\end{equation}

The proof of the proposition is based on the existence of solutions to ordinary differential equations in the Banach space $C^{2,\alpha}_{-\tau}(M)$. If $\xi$ solves (\ref{eqn: equation for xi}) along the Ricci flow and $\xi$ is $C^1$ in time, taking time derivatives of (\ref{eqn: equation for xi}) and using that $\psi_0$ is time-independent implies
\begin{align}\label{eqn: intermediate dot xi}
    \dot{P}\xi+P\dot{\xi}
    =-\dot{P}\psi_0.
\end{align}
The evolution equation for the Dirac operator (\ref{lem: variation of Dirac operator}) along Ricci flow implies that the operator $\dot{P}:C^{2,\alpha}_{-\tau}(M)\to C^{1,\alpha}_{-\tau-1}(M)$, the variation of $P_t$ along the Ricci flow, is given by
\begin{equation}\label{eqn: variation of P}
    \dot{P}\xi
    =\sigma_t(\Ric(e_i)\cdot \nabla_i -\frac{1}{4}(\nabla \Scal) \cdot) \sigma_t^{-1}\xi,
\end{equation}
the curvatures and Clifford multiplication on the right above naturally being evaluated at time $t$.
Further, Li \cite{L} showed that if $g_{ij}(0)-\delta_{ij}\in C^k_{-\tau}$, then $g_{ij}(t)-\delta_{ij}\in C^{k-2}_{-\tau}$ for $t\geq 0$. In particular, if the metric $g(0)$ is initially smooth, then (\ref{eqn: intermediate dot xi}) and (\ref{eqn: variation of P}) indeed are well-defined in $C^{1,\alpha}_{-\tau-1}$.

Equation (\ref{eqn: intermediate dot xi}) can be inverted to obtain an ODE for the time derivative,
\begin{equation}\label{eqn: ODE for xi}
    \dot{\xi}=-P^{-1}\dot{P}(\xi+\psi_0),
\end{equation}
because the operator $P_t$ is an isomorphism for each time by Lemma \ref{lem: Dirac is isomorphism}.

Let $\psi(0)$ be a $g(0)$ Witten spinor asymptotic to $\psi_0$ and define $\xi(0)=\psi(0)-\psi_0$, so that $P_0\xi(0)=-P_0\psi_0$ by (\ref{eqn: equation for xi}). 
To prove the Lemma, it suffices to show that the above ODE in $C^{2,\alpha}_{-\tau}(M)$ admits a solution on some time interval $[0,\epsilon]$; indeed, if this is the case, then the fundamental theorem of calculus and (\ref{eqn: intermediate dot xi}) imply that if $\xi(t)$ is defined by
\begin{equation}
    \xi(t)=\xi(0)+\int_0^t\dot{\xi}(s)\,ds,
\end{equation}
then $\xi(t)$ is $C^1$ in time and satisfies $P_t\xi(t)+P_t\psi_0=P_0\xi(0)+P_0\psi_0=0$. 
Hence if $\xi(0)$ solves (\ref{eqn: equation for xi}), then $\psi(t):=\xi(t)+\psi_0$ is a $g(t)$ Witten spinor. 

By the contraction mapping theorem, the existence for solutions of the ODE (\ref{eqn: ODE for xi}) in $C^{2,\alpha}_{-\tau}(M)$ are guaranteed \cite[Thm.\ 9.4]{Co} on a time interval $[0,\epsilon]$, as long as the time-dependent vector field 
\begin{equation}
    Y:[0,\epsilon]\times C^{2,\alpha}_{-\tau}(M)\to C^{2,\alpha}_{-\tau}(M)
    \qquad \qquad 
    Y(t,\xi)=-P_t^{-1}\dot{P}_t(\xi+\psi_0),
\end{equation}
defined by the ODE is locally Lipschitz in the second variable and $\epsilon$ is chosen small enough (depending on the Lipschitz constant). The vector field $Y$ is indeed locally Lipschitz, since
\begin{align}
    \|Y(t,\xi_1)-Y(t,\xi_2)\|_{C^{2,\alpha}_{-\tau}}
    &= \|P_t^{-1}\dot{P}_t(\xi_1-\xi_2)\|_{C^{2,\alpha}_{-\tau}}
    \nonumber\\
    &\leq \sup_{s\in [0,\epsilon]} \|P_s^{-1}\dot{P}_s\|_{\mathrm{op}}\|\xi_1-\xi_2\|_{C^{2,\alpha}_{-\tau}}.
\end{align}
The Lipschitz constant $K$ of the vector field $Y$, given by
\begin{equation}
    K=\sup_{s\in [0,\epsilon]} \|P_s^{-1}\dot{P}_s\|_{\mathrm{op}},
\end{equation}
is finite because the time interval $[0,\epsilon]$ is compact, the operator $P_t$ varies smoothly along the Ricci flow, and the curvature and its derivatives are uniformly bounded along the flow by \cite{L}.
\end{proof}

\begin{proposition}
[Time derivative of weighted Witten spinor]\label{prop: regularity of time derivatives; weighted case}
Under the hypothesis of Theorem \ref{thm: monotonicity intro}, the time derivatives of $f$ and $\psi$ satisfy $\dot{f}\in C^{2,\alpha}_{-\tau}(M)$ and $\dot{\psi}\in C^{2,\alpha}_{-\tau}(M)$.
\end{proposition}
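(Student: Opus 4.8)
The plan is to obtain the weighted result by combining the two unweighted-type results already established, namely Lemma \ref{lem: time derivative of f} for the weight function and Proposition \ref{prop: Regularity of time derivatives; unweighted case} for the Witten spinor, together with the unitary equivalence (\ref{eqn: unitary equivalence of Dirac and weighted Dirac}). Recall from the proof of Claim 1 of Theorem \ref{thm: existence and uniqueness of critical points} that the weighted Witten spinor at each time is $\psi_{f(t)} = e^{f(t)/2}\psi(t)$, where $\psi(t)$ is the unweighted Witten spinor for $g(t)$ asymptotic to $\psi_0$ and $f(t)$ solves $\Scal_{f(t)} = 0$; the weight $f(t)$ is, in turn, the unique solution of the elliptic equation supplied by Theorem \ref{thm: existence and uniqueness of critical points}, and its time dependence is governed by Lemma \ref{lem: time derivative of f}.

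First I would invoke Lemma \ref{lem: time derivative of f} to conclude that the family $f(t)$ is $C^1$ in time with $\dot f \in C^{2,\alpha}_{-\tau}(M)$. Next, I would invoke Proposition \ref{prop: Regularity of time derivatives; unweighted case} to conclude that the family of unweighted Witten spinors $\psi(t)$ is $C^1$ in time with $\dot\psi_{\mathrm{unw}} \in C^{2,\alpha}_{-\tau}(M)$ (here using the time-dependent spin-bundle isometries $\sigma_t$ to make sense of the time derivative, exactly as in that proof). Then I would differentiate the product $\psi_{f(t)} = e^{f(t)/2}\psi(t)$ in time: by the product rule this gives
\begin{equation}
    \partial_t \psi_{f} = \tfrac{1}{2}\dot f\, e^{f/2}\psi + e^{f/2}\dot\psi_{\mathrm{unw}}.
\end{equation}
Since $f \in C^{2,\alpha}_{-\tau}$ with $f \to 0$ at infinity, the multiplier $e^{f/2}$ and its derivative $\tfrac{1}{2}\dot f\, e^{f/2}$ are bounded with bounded derivatives, and multiplication by such functions preserves $C^{2,\alpha}_{-\tau}(M)$ (this is a routine consequence of the product structure of the weighted Hölder norm and the Leibniz rule, since $C^{2,\alpha}_0(M)$ acts boundedly on $C^{2,\alpha}_{-\tau}(M)$). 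Because $\psi = \psi_0 + \xi$ with $\xi \in C^{2,\alpha}_{-\tau}$ and $\psi_0$ constant of norm $1$, one checks that $\dot f\, e^{f/2}\psi$ lies in $C^{2,\alpha}_{-\tau}$ (the $\psi_0$ part contributes $\dot f\, e^{f/2}\psi_0 \in C^{2,\alpha}_{-\tau}$ and the $\xi$ part contributes a product of two $C^{2,\alpha}_{-\tau}$-decaying factors), and $e^{f/2}\dot\psi_{\mathrm{unw}} \in C^{2,\alpha}_{-\tau}$ likewise. Hence $\dot\psi := \partial_t \psi_f \in C^{2,\alpha}_{-\tau}(M)$, as claimed, and the family $\psi_f(t)$ is $C^1$ in time.

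The main obstacle is bookkeeping rather than analysis: one must be careful that the time derivative $\dot\psi$ appearing in the monotonicity theorem is the cylindrical derivative $\overline\nabla_t\psi$ of Section \ref{sec: Spin geometry of generalized cylinders}, whereas the derivative produced above is expressed through the bundle isometries $\sigma_t$ of \cite{BG}. These two notions of time derivative differ by a zeroth-order (algebraic) operator built from $\dot g$, which is $O(r^{-\tau-2})$ along the Ricci flow by $\dot g = -2(\Ric + \Hess_f)$; since this operator maps $C^{2,\alpha}_{-\tau}$ into itself (indeed with extra decay), the conclusion $\dot\psi \in C^{2,\alpha}_{-\tau}(M)$ is unaffected by which convention is used. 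I would close by remarking that the decay rates $\dot f = O(r^{-\tau})$ and $\dot\psi = O(r^{-\tau})$ extracted here are precisely what the proof of Theorem \ref{thm: monotonicity intro} uses to kill the boundary terms.
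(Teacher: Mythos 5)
Your proposal is correct and follows essentially the same route as the paper: the paper's proof likewise writes the weighted Witten spinor as $e^{\pm f/2}$ times the unweighted one via the unitary equivalence (\ref{eqn: unitary equivalence of Dirac and weighted Dirac}) and then cites Lemma \ref{lem: time derivative of f} and Proposition \ref{prop: Regularity of time derivatives; unweighted case}. You spell out the product rule, the multiplier bounds, and the comparison between the cylindrical derivative $\overline{\nabla}_t$ and the Bourguignon--Gauduchon identification in more detail than the paper does, but the argument is the same.
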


\begin{proof}
Because the scalar curvature is nonnegative, Witten's proof of the positive mass theorem implies the existence of an (unweighted) Witten spinor $\phi$. By the unitary equivalence \ref{eqn: unitary equivalence of Dirac and weighted Dirac} between the Dirac and weighted Dirac operators, $\psi=e^{-f/2}\phi$ is the weighted Witten spinor, and by Lemma \ref{lem: time derivative of f} and Proposition \ref{prop: Regularity of time derivatives; unweighted case}, the time derivative of $e^{-f/2}\phi$ exists and lies in $C^{2,\alpha}_{-\tau}$.
\end{proof}

\subsection{Weighted integration by parts formulas.}
\label{sec: weighted integration by parts}
Let $(M^n,g,f)$ be a weighted Riemannian manifold with boundary $\partial M$, whose outward normal is denoted $\nu$. The weighted divergence of a tensor $T$ is defined as 
\begin{equation*}
    \Div_f(T)=\Div(T)-T(\nabla f,\cdot).
\end{equation*}
The same definition applies when $T$ takes values in an auxiliary vector bundle equipped with a metric and compatible connection, like the spin bundle. 

The divergence theorem $\int_M\Div(X)\,dV=\int_{\partial M}\langle X,\nu\rangle \,dA$, along with the definition of the weighted divergence, implies the weighted divergence theorem
\begin{equation*}
    \int_M\Div_f(X)\,e^{-f}dV
    =\int_{\partial M}\langle X,\nu\rangle \,e^{-f}dA.
\end{equation*}
Applied to the vector field $uX$, for any function $u$ on $M$, the weighted divergence theorem implies
\begin{equation*}
    \int_Mu\Div_f(X)\,e^{-f}dV
        =-\int_M\langle \nabla u,X\rangle \,e^{-f}dV +\int_{\partial M}u\langle X,\nu\rangle \,e^{-f}dA.
\end{equation*}
Since the weighted Laplacian is defined as $\Delta_f=\Div_f\circ \nabla$, the above formula implies
\begin{equation*}
    \int_M\left(u\Delta_fv-(\Delta_fu)v\right)e^{-f}dV
        =\int_{\partial M} \left(u\nabla_{\nu}v-(\nabla_{\nu}u)v\right) e^{-f}dA.
\end{equation*}

The above discussion generalizes in a straightforward manner to higher-rank tensors: for any vector bundle valued $k$-tensor $T$ and $(k-1)$-tensor $S$, the weighted divergence theorem is
\begin{equation*}
    \int_M\langle \Div_f(T),S\rangle \,e^{-f}dV
    =-\int_M\langle T,\nabla S\rangle\,e^{-f}dV
    +\int_{\partial M}\langle T(\nu,\cdot),S\rangle\,e^{-f}dA.
\end{equation*}
This follows from Stokes theorem applied to the $(n-1)$-form $\alpha =\iota_X(dV_g)$, where $X$ is the vector field $X=\langle T(e_i,\cdot),S\rangle e^{-f}e_i$; indeed, with these choices, it follows that
\begin{equation*}
    d\alpha = \Div(X)dV=\left(\langle \Div_f(T),S\rangle +\langle T,\nabla S\rangle\right)e^{-f}dV.
\end{equation*}
For a symmetric 2-tensor $T$ on $M$, the weighted divergence theorem simplifies: symmetry of $T$ and the definition of the Lie derivative of the metric implies, for every vector field $X$ on $M$, 
\begin{equation*}
    \int_M\langle \Div_f(T),X\rangle \,e^{-f}dV
    =-\frac{1}{2}\int_M\langle T,\mathscr{L}_Xg\rangle\,e^{-f}dV
    +\int_{\partial M}T(X,\nu)\,e^{-f}dA.
\end{equation*}
{\footnotesize

}

\newcommand{\Addresses}{{
  \bigskip
  \bigskip
  \small
    \textsc{Department of Mathematics}\par\nopagebreak
    \textsc{Massachusetts Institute of Technology}\par\nopagebreak
    \textsc{77 Massachusetts Avenue} \par\nopagebreak
    \textsc{Cambridge, MA 02139} 
    
    \medskip
    \medskip
    
    \textit{Correspondence to be sent to:} 
    \texttt{juliusbl@mit.edu}

}}

\Addresses

\end{document}